\documentclass{article}
\pdfoutput = 1

\usepackage[T1]{fontenc}        
\usepackage[utf8]{inputenc}
\usepackage[english]{babel}     

\usepackage[defaultlines=4,all]{nowidow}
\usepackage{csquotes}          
\usepackage{authblk} 
\usepackage{scalerel,stackengine}

\usepackage{amsmath,xparse}
\usepackage{amsfonts}
\usepackage{amssymb}
\usepackage{amsthm}

\usepackage[usenames,dvipsnames,table]{xcolor}  
\usepackage{mathrsfs}
\usepackage{graphicx}
\usepackage{stmaryrd}

\usepackage{lmodern}   
\usepackage{bm}               
\usepackage{bbold}

\usepackage[inline]{enumitem}
\setlist{itemsep=3pt,topsep=3pt,partopsep=2pt,parsep=2pt,leftmargin=30pt}
\setlist[2]{itemsep=2pt,topsep=0pt,partopsep=2pt,leftmargin=20pt,parsep=2pt}
\setlist[enumerate]{label*=\textup{\arabic*.}}

\SetEnumitemKey{ind}{label=\textup{(\roman*)}}
\SetEnumitemKey{dep}{label=\textup{(\alph*)}}
\SetEnumitemKey{seq}{label=\textup{(\arabic*)}}
\SetEnumitemKey{cases}{label=\textup{(\alph*)}}
\SetEnumitemKey{cond}{label=\textup{(\alph*)}}

\usepackage[
            unicode,
            colorlinks=true,
            linkcolor=blue,
            citecolor=cyan 
            ]{hyperref}

\usepackage[capitalise]{cleveref}     
\usepackage[capitalise]{cleveref}

 \crefname{def}{definition}{definitions}
 \Crefname{def}{Definition}{Definitions}

 \crefname{thm}{theorem}{theorems}
 \Crefname{thm}{Theorem}{Theorems}

 \crefname{prop}{proposition}{propositions}
 \Crefname{prop}{Proposition}{Propositions}

 \crefname{lem}{lemma}{lemmas}
 \Crefname{lem}{Lemma}{Lemmas}

 \crefname{exm}{example}{examples}
 \Crefname{exm}{Example}{Examples}

 \crefname{cor}{corollary}{corollaries}
 \Crefname{cor}{Corollary}{Corollaries}

 \crefname{eq}{equation}{equations}
 \Crefname{eq}{Equation}{equations}

 \crefname{defn}{definition}{definitions}
 \Crefname{defn}{Definition}{Definitions}

 \crefname{qst}{question}{questions}
 \Crefname{qst}{Question}{Questions}

 \crefname{eq}{equation}{equations}
 \Crefname{eq}{Equation}{Equations}

 \crefname{rem}{remark}{remarks}
 \Crefname{rem}{Remark}{Remarks}

 \crefname{nott}{notation}{notations}
 \Crefname{nott}{Notation}{Notations}

 \crefname{cnj}{conjecture}{conjectures}
 \Crefname{cnj}{Conjecture}{Conjectures}
 
 \crefname{fig}{figure}{figures}
 \Crefname{fig}{Figure}{Figures}
 
 \crefname{claim}{claim}{claims}
 \Crefname{claim}{Claim}{Claims}

\usepackage{float}
\usepackage{tikz-cd}
\tikzset{close/.style={near start,outer sep=-2pt}}

\usepackage{tikz}
\usetikzlibrary{automata,positioning,decorations.markings}
\usetikzlibrary{decorations.pathmorphing}
\usetikzlibrary{decorations.pathreplacing,calligraphy}
\usetikzlibrary{arrows}
\usetikzlibrary{shapes.geometric}
\tikzset{dot/.style={circle,fill=black,inner sep=0pt, minimum size=0.8pt}}
\tikzset{smallstate/.style={circle,fill=black,inner sep=0pt, minimum size=3pt}}

\usepackage{tikz-cd}

\usepackage{xifthen}
\usepackage{scalerel}

\usepackage[a4paper,left=3cm,top=3.7cm,right=3cm,bottom=3.1cm]{geometry}   
\usepackage[backend = biber,       
            language = english,
            style    = numeric-comp,   
            giveninits = true,
            isbn = false,
            doi = false,
            sorting = nyt,
            backref=true,
            sortcites
            ]{biblatex}

\DeclareNameAlias{sortname}{last-first}
\renewbibmacro{in:}{%
  \ifentrytype{article}{}{%
  \printtext{\bibstring{in}\intitlepunct}}}
\addbibresource{references.bib}  


\AtEveryBibitem{
    \clearfield{urlyear}
    \clearfield{urlmonth}
    \clearfield{note}%
    \clearfield{url}
}

\setlength\bibitemsep{0.5\baselineskip}
\DeclareNameAlias{labelname}{given-family}
\renewrobustcmd*{\bibinitdelim}{\,}

\newcommand{\citenr}[1]{\textnormal{\citeauthor{#1},~\cite{#1}}}

\newtheorem{thm}{Theorem}[section]
\newtheorem{cor}[thm]{Corollary}
\newtheorem{lem}[thm]{Lemma}
\newtheorem{prop}[thm]{Proposition}

\theoremstyle{definition}
\newtheorem{defn}[thm]{Definition}
\newtheorem{exm}[thm]{Example}
\newtheorem{que}[thm]{Question}
\theoremstyle{remark}
\newtheorem{rem}[thm]{Remark}
\numberwithin{equation}{section}

\newcommand{\ie}{i.e.,~}       
\newcommand{\eg}{e.g.~}        

\newcommand{\wrt}{w.r.t.~}
\newcommand{\resp}{resp.,~}

\newcommand{\rk}{\mathrm{rk}}
\newcommand{\im}{\mathrm{Im}}

\newcommand{\ZZ}{\mathbb{Z}}
\newcommand{\NN}{\mathbb{N}}
\newcommand{\trivial}{1}                
\newcommand{\Trivial}{\{ \trivial \}}   
\newcommand{\normalcl}[1]{\langle \! \langle \hspace{1pt} #1 \hspace{1pt} \rangle \! \rangle}

\newcommand{\fgen}{\ensuremath{\mathsf{fg}}}
\newcommand{\fin}{\ensuremath{\mathsf{fi}}}

\newcommand{\leqfg}{\leqslant_{\fgen}}
\newcommand{\leqfi}{\leqslant_{\fin}}

\newcommand{\FTA}{\Fn \times \ZZ^m}

\newcommand{\fg}{0}
\newcommand{\nfg}{1}

\newcommand{\parts}{\mathcal{P}([k])\setminus \{\varnothing\}}

\newcommand{\schi}{\raisebox{1pt}{$\chi$}}
\newcommand{\one}{\mathsf{1}}
\newcommand{\zero}{\mathsf{0}}
\newcommand{\conf}[1][]{{\schi}_{\scriptstyle{#1}}}

\newcommand{\qplus}{\mathbin{\scalebox{0.9}{$\boxplus$}}}

\newcommand{\join}{\mathop{
\mathchoice
  {\textstyle{\vee}}
  {\textstyle{\scalebox{0.8}{$\vee$}}}
  {\scriptscriptstyle{\vee}}
  {\scriptscriptstyle{\vee}}
}}

\newcommand{\isom}{\simeq}      

\newcommand{\Free}[1][]{%
\ifthenelse{\isempty{#1}}{\mathbb{F}}{\mathbb{F}_{\!#1}}
}
\newcommand{\Fn}{\Free[n]}
\newcommand{\Zm}{\mathbb{Z}^m}

\newcommand{\cone}[2]{c_{#2}(#1)}

\newcommand{\defin}[1]{\emph{#1}\index{#1}}
\renewcommand{\t}{\mathrm{t}}
\newcommand{\Basis}{\mathcal{B}}

\newcommand{\preim}{
^{\tikz[baseline=-0.4ex]{\draw [{<[scale=.75]}-] (0,0) -- (.17,0);}}
  }

\newcommand{\ssum}{\scalebox{0.7}{$\sum$}}

\newcommand{\gen}[1]{\langle  #1 \rangle}

\newcommand{\normaleq}{ \trianglelefteqslant }
\newcommand{\st}{\mid}                       
\newcommand{\cab}[2]{\mathbf{c}_{#2}(#1)} 
\newcommand{\Cab}[2]{C_{#2}(#1)} 

\newcommand{\xto}[2][]{\xrightarrow[#1]{#2}}

\newcommand{\id}{\operatorname{id}}

\tikzset{every state/.style={
    inner sep=1pt,
    minimum size=4pt,
    fill=black
    }}

\newcommand{\bp}{\ensuremath{\begin{tikzpicture}   
\node[state,accepting] (1) {}; \end{tikzpicture}
}}

\newsavebox{\mytempbox}

\newcommand{\xinto}[1]{%
   \sbox{\mytempbox}{\hbox{\( \scriptstyle\mkern10mu#1\mkern12mu \)}}
   \tikz[baseline=-0.5ex]{\draw[right hook->,line cap=round] (0,0) --
     node[midway,above=-0.3ex]{\usebox\mytempbox} (\wd\mytempbox,0);}
  }
  
\newcommand{\xonto}[1]{%
    \sbox{\mytempbox}{\hbox{\( \scriptstyle\mkern10mu#1\mkern12mu \)}}
    \tikz[baseline=-0.5ex]{\draw[->>,line cap=round] (0,0) --
      node[midway,above=-0.3ex]{\usebox\mytempbox} (\wd\mytempbox,0);}
   }

\newcommand{\into}{\xinto{\ }}
\newcommand{\onto}{\xonto{\ }}

\newcommand{\cast}{\circledast}

\stackMath
\newcommand\reallywidehat[1]{%
\savestack{\tmpbox}{\stretchto{%
  \scaleto{%
    \scalerel*[\widthof{\ensuremath{#1}}]{\kern-.6pt\bigwedge\kern-.6pt}%
    {\rule[-\textheight/2]{1ex}{\textheight}}
  }{\textheight}%
}{0.5ex}}%
\stackon[1pt]{#1}{\tmpbox}%
}

\newcommand{\set}[1]{\{  #1  \}}        

\newcommand{\SIP}{\ensuremath{\mathsf{SIP}}}
\newcommand{\MSIP}{\ensuremath{\mathsf{MSIP}}}

\title{Intersection configurations in free and\\free times free-abelian groups}

\author[1]{J. Delgado\thanks{\url{jorge.delgado@upc.edu}}}

\author[2]{M. Roy\thanks{\url{mallikaroy75@gmail.com}}}

\author[3]{E. Ventura\thanks{\url{enric.ventura@upc.edu}}}
\affil[1,3]{Departament de Matem\`atiques, Universitat Polit\`ecnica de Catalunya, and Institut de Matem\`atiques de la UPC-BarcelonaTech, Catalonia}
\affil[2]{Departamento de Matemáticas, Universidad del País Vasco (UPV/EHU), Spain}

\begin{document}

\maketitle

\begin{abstract}
In this paper we study intersection configurations ---which describe the behaviour of multiple (finite) intersections of subgroups with respect to finite generability--- in the realm of free and free times free-abelian (FTFA) groups. We say that a configuration is realizable in a group $G$ if there exist subgroups $H_1,\ldots , H_k \leqslant G$ realizing it.

It is well known that free groups $\Fn$ satisfy the Howson property: the intersection of any two finitely generated subgroups is again finitely generated. We show that the Howson property is indeed the only obstruction for multiple intersection configurations to be realizable within nonabelian free groups.

On the contrary, FTFA groups $\Fn \times \Zm$ are well known to be non-Howson. We also study multiple intersections within FTFA groups, providing an algorithm to decide, given $k\geq 2$ finitely generated subgroups, whether their intersection is again finitely generated and, in the affirmative case, compute a `basis' for it. We finally prove that any intersection configuration
is realizable in a FTFA group $\FTA$, for $n\geq 2$ and big enough $m$. As a consequence, we exhibit finitely presented groups where every intersection configuration is realizable.
\end{abstract}

\bigskip
\noindent
\textsc{Keywords}: free group, free-abelian group, direct product, subgroup, multiple intersection, intersection configuration.
\medskip

\noindent
\textsc{Mathematics Subject Classification 2020}: 20E05, 20E22, 20F05, 20F10.

\vspace{15pt}
\section{Introduction}

The behaviour of subgroup intersections with respect to finite generability has a long and interesting history in the context of nonabelian groups. Although finitely generated free groups $\Free[n]$ contain non-(finitely generated) subgroups, in the 1950's \citeauthor{howson_intersection_1954} proved that the intersection of two (and hence, finitely many) finitely generated subgroups of $\Free[n]$ is again finitely generated; see~\cite{howson_intersection_1954}. In acknowledgment of this initial result, a group is said to satisfy the \defin{Howson property} (or to be \defin{Howson}, for short) if the intersection of any two finitely generated subgroups is again finitely generated. As a generalization, B. Baumslag proved in~\cite{baumslag_intersections_1966} the preservation of the Howson property under free products: if $G_1$ and $G_2$ satisfy the Howson property then so does $G_1 *G_2$. However, the same statement fails dramatically when replacing the free product by the apparently tame direct product: the result below is folklore (it appears in~\cite{burns_intersection_1998} attributed to Moldavanski, and as the solution to Exercise~23.8(3) in~\cite{bogopolski_introduction_2008}).

\begin{prop}\label{prop: Fn x Z^n no Howson}
The group $\FTA$, for $n\geq 2$ and $m\geq 1$, does not satisfy the Howson property.
\end{prop}

\begin{proof}
In $\Free[2] \times \ZZ =\langle x,y \mid \, \rangle \times \langle t \mid \, \rangle $, consider the (two-generated, free) subgroups $H=\langle x,y \rangle$ and $H'=\langle xt, y \rangle$. Clearly,
 \begin{align*}
H\cap H' & =\{ w(x,y) \mid w\in \Free[2]\} \cap \{ w(xt,y) \mid w\in \Free[2]\} \\ & = \{ w(x,y) \mid w\in \Free[2]\} \cap \{ w(x,y)\t^{|w|_x} \mid w\in \Free[2]\} \\ & = \{ w(x,y)\t^0 \mid w\in \Free[2],\,\, |w|_x =0 \} \\ & =\langle x^{-k} y x^{k} ,\, k\in \ZZ \rangle =\langle\!\langle y\rangle\!\rangle_{\Free[2]},
 \end{align*}
where $|w|_x$ is the total $x$-exponent of $w$ (i.e.\ the first coordinate of the image $\mathbf{w} = (w)\rho\in \ZZ^2$ of $w \in \Free[2]$ under the abelianization map $\rho \colon \Free[2]\onto \ZZ^2$ with the obvious bases). It is well known that the normal closure $\normalcl{y}$ of $y$ in $\Free[2]$ is not finitely generated, hence $\Free[2] \times \ZZ$ does not satisfy the Howson property. Since, for all $n\geq 2$ and $m\geq 1$, the group $\Free[2] \times \ZZ$ embeds in $\Fn \times\ZZ^m$, this last one does not satisfy the Howson property either.
\end{proof}

We remark that the subgroups $H$ and $H'$ in the previous counterexample are both isomorphic to $\Free[2]$. Quite interestingly, the above is a situation where two free groups of rank 2 have a non-(finitely generated), of course free, intersection. This does not contradict the Howson property for free groups, but rather indicates that there is no free subgroup of $\Free[2] \times \ZZ$ containing both $H$ and~$H'$.

The behavior of intersections within free times free-abelian groups (FTFA groups, for short) was studied in detail in~\cite{delgado_algorithmic_2013,delgado_stallings_2022}, where the authors solved the so-called 
\defin{Subgroup Intersection Problem} (\SIP, for short) within this family of groups. 

\begin{thm}[\citenr{delgado_algorithmic_2013}]\label{thm: SIP FTA}
The Subgroup Intersection Problem for $\FTA$ is computable. \qed
\end{thm}

The $\SIP$ is the special case of the Multiple Subgroup Intersection Problem for $k=2$ subgroups:

\begin{defn}
The \defin{Multiple Subgroup Intersection Problem for a group $G$}, $\MSIP(G)$, consists in, given finite sets of generators for finitely many subgroups $H_1,\ldots ,H_k\leqfg G$, deciding whether the intersection ${H_1\cap \cdots \cap H_k}$ is again finitely generated and, in the affirmative case, computing a set of generators for it.  
\end{defn}

In the present paper we investigate the multiple versions of both the Howson property for free groups, and the Subgroup Intersection Problem for FTFA groups (solved in the theorem above for two subgroups). We emphasize that $\MSIP(\FTA)$ does not follow directly by induction from $\SIP(\FTA)$ (using the recurrence $H_1\cap \cdots \cap H_k =(H_1\cap \cdots \cap H_{k-1}) \cap H_k$) because it could very well happen that some of the intermediate intersections, even all of them, are not finitely generated, whereas $H_1, \ldots ,H_k$ and $H_1\cap \cdots \cap H_k$ are all finitely generated. Instead, we need to build a procedure dealing directly with the total intersection $H_1\cap \cdots \cap H_k$, but without going through the intermediate ones.

At the beginning of \Cref{sec: multiple intersection} we briefly survey the algebraic proof for \Cref{thm: SIP FTA}, before delving into the more involved multiple variant of this problem. Then, we set up the machinery needed to study multiple intersections within $\FTA$, which allows us to extend $\SIP(\FTA)$ to $\MSIP(\FTA)$ (see \Cref{thm: MSIP}) and prove some technical statements (remarkably, \Cref{thm: technical}) crucial to derive our main results in later sections.

On the other hand recall that, since $\FTA$ is not Howson (for $n \geq 2$ and $m\geq 1$), for each pair of subgroups of $\FTA$, there are two possibilities for their intersection: either it is finitely generated, or it is not. When we consider $k\geq 2$ subgroups, many different combinations of finitely generated and non-(finitely generated) partial intersections may arise. In \Cref{sec: conf} we introduce the notion of intersection configuration as a compact way to describe all the possible intersection situations (in terms of finite generability) between finitely many subgroups.

In Section~\ref{sec: unrealizable} we take advantage of our analysis of multiple intersections to deduce obstructions for these $k$-intersection configurations to be realizable in $\FTA$. In particular, we see that, despite their very flexible character (described in \cite{delgado_stallings_2022}) not every intersection configuration is realizable in (a fixed) $\FTA$.

As a natural continuation, in Section~\ref{sec: realizing} we use some results from \Cref{sec: multiple intersection} to show that every $k$-configuration is realizable in $\FTA$ for a big enough $m$. That is, for $\mathcal{I} \subseteq \mathcal{P}([k])\setminus \{ \varnothing\}$, there exists a big enough $m\geq 0$ and subgroups $H_1, H_2, \ldots ,H_k$ of  $\FTA$ satisfying the following: for every nonempty $I\subseteq \{1,\ldots ,k\}$, the intersection $H_I=\bigcap_{\, i\in I} H_i$ is finitely generated if and only if $I \in \mathcal{I}$ (see Theorem~\ref{thm: all realizable} below for details). We deduce the existence of finitely presented groups where all such configurations are realizable; we call them intersection-saturated groups. 

Finally, in Section~\ref{sec: free}, we study the free case ($m=0$): such a $k$-configuration is realizable in $\Fn$, $n\geq 2$, if and only if, for every nonempty $I,J\subseteq \{1,\ldots ,k\}$, $H_{I\cup J}=H_I\cap H_J$ is required to be finitely generated whenever $H_I$ and $H_J$ are so; that is, the Howson property is the \emph{only} obstacle to realize arbitrary $k$-configurations in a free ambient group; see~\Cref{thm: char Fn}.

\subsection*{General notation and conventions}

The set of natural numbers, denoted by $\NN$, is assumed to contain 0, and we specify conditions on this set using subscripts; for example, we denote by $\NN_{\geq 1}$ the set of strictly positive integers. For $k \in \NN_{\geq 1}$, we write $[k]=\{n\in \NN \st 1 \leq n \leq k\}$.

We use lowercase boldface font ($\mathbf{a}, \mathbf{b}, \mathbf{c},\ldots$) to denote elements of the free-abelian group $\ZZ^m$ (usually thought as horizontal vectors); and uppercase boldface font ($\mathbf{A},\mathbf{B},\mathbf{C},\ldots $) to denote matrices, which --- as homomorphisms in general --- are assumed to act on the right. That is, we denote by $(x)\varphi$ (or simply by~$x \varphi$) the image of the element $x$ by the map $\varphi$, and we denote by $\varphi \psi$ the composition \smash{$A \xto{\varphi} B \xto{\psi} C$}. In order to distinguish from inverse maps, we denote by $S\varphi\preim$ the set of preimages in $A$ of the set $S\subseteq B$ under the map $\varphi$.

Throughout the paper we write $H \leqslant G$ (\resp $H\leqfg G$ and $H\leqfi G$) to express that $H$ is a subgroup (\resp a finitely generated subgroup, and a finite index subgroup) of $G$, reserving the symbol $\leq$ for inequalities among real numbers.
We use the abbreviations `f.g.' to mean `finitely generated' and `non-f.g.' to mean `non-(finitely generated)'; and we refer to the difference between finite generability and non-(finite generabilty) of a subgroup as the \defin{character} of the subgroup. 

Finally, the symbol $\infty$ denotes the \emph{countable} infinity (\ie $\infty =\aleph_0$), and we denote by $\ZZ^{\infty}$ the \emph{direct sum} of countably many copies of $\ZZ$, \ie $\ZZ^{\infty}=\bigoplus_{n\geq 1} \ZZ$ (where elements always have finite support).
More specific notation and terminology are set forth in the corresponding sections.

\section{Multiple intersections in free times free-abelian groups}\label{sec: multiple intersection}

We call \defin{free times free-abelian} (FTFA) groups the groups admitting a presentation of the form
 \begin{equation} \label{eq:pres FTFA}
\FTA =\left\langle \, x_1,\ldots,x_n,t_1,\ldots,t_m\, \mid t_it_j \,=\, t_jt_i,\, t_ix_k=x_kt_i \, \right\rangle,
 \end{equation}
where $n,m \geq 0$.
We write $X=\{x_1,\ldots,x_n\}$ the set of (freely independent) generators of a free group $\Fn$ of rank $n$, and $T=\{t_1,\ldots,t_m\}$ the set of (commuting and linearly independent) generators of a free-abelian group $\Zm$ of rank $m$; of course, each of the $x_i$'s also commutes with each of the $t_j$'s.
Note that these groups $G = \Fn \times \Zm$ fit in the middle of a short exact sequence of the form
 \[
\begin{tikzcd} 1 \arrow[r] & \Zm \arrow[r,"\iota"] & G \arrow[r,"\pi"] & \Fn \arrow[r] \arrow[l, dashed, bend left,"\sigma"] & 1, \end{tikzcd}
 \]
which obviously splits; that is, there exists a homomorphism $\sigma \colon \Fn \to G$ such that $\sigma \pi =\id_{\Fn}$ (called a \defin{section} of $\pi$).

As is customary with this kind of groups, we shall refer to the elements in~$G = \FTA$ by using their normal forms (with vectors on the right), which we write multiplicatively as $u\t^\mathbf{a}$, where $u=u(x_1,\ldots ,x_n) \in \Fn$ is called the \emph{free part} of $u\t^\mathbf{a}$, and $\mathbf{a}=(a_1, \ldots,a_m)\in \ZZ^m$ is called the \emph{abelian part} of $u\t^\mathbf{a}$ (the meta-symbol $\t$ is just a mnemonic way to encapsulate the standard additive notation for $\ZZ^m$ into a multiplicative one, i.e.\ $\t^\mathbf{a}=t_1^{a_1} \cdots t_m^{a_m}$). Note that then, $(u\t^\mathbf{a})(v\t^\mathbf{b})=uv \, \t^\mathbf{a+b}$.

We denote by $\pi\colon \FTA \onto \Fn$, $u\t^\textbf{a}\mapsto u$ and by $\tau\colon \FTA \onto \ZZ^m$, $u\t^\textbf{a}\mapsto \textbf{a}$, the natural projections to the free part $\Fn$ and to the free-abelian part $\ZZ^m$ (now in additive notation), respectively. Clearly, both maps are group homomorphisms.

It is not difficult to see that every subgroup $H \leqslant \Fn \times \Zm$ is of the form $H=H\pi \sigma \times (H \cap \Zm)$, where $\sigma$ is a section of $\pi$, $\rk (H\pi\sigma) =\rk (H \pi)\in [0,\infty]$, and the rank of $L_H =H\cap \Zm$ is at most $m$. An immediate (but important for us) consequence is stated below.

\begin{rem}\label{rem: H fg iff Hpi fg}
A subgroup $H\leqslant \Fn \times \ZZ^m$ is finitely generated if and only if its projection $H \pi$ (to the free part) is finitely generated.
\end{rem}

Next, we introduce (abelian) completions, a concept playing an important role in this kind of groups.

\begin{defn} \label{def: completion}
Let $H$ be a subgroup of $G =\FTA$, let $\sigma$ be a section of $\pi_{|H}$, let $w\in \Free_n$, and let $\mathbf{a} \in \Zm$. If $w \t^{\mathbf{a}} \in H$ then we say that $\mathbf{a}$ \defin{completes $w$ into} $H$. More precisely, the \defin{$\sigma$-completion} of $w$ in $H$, denoted by $\cab{w}{H,\sigma}$, is the empty set if $w\not\in H\pi$, and equals to $w\sigma \tau \in \ZZ^m$ otherwise. Similarly, the \defin {(full) completion} of $w$ in $H$ is $\Cab{w}{H}=(w\pi_{|H}\preim) \tau =\{ \mathbf{a} \in \ZZ^m \st w\t^{\mathbf{a}} \in H \}$. That is, $\Cab{w}{H}$ is the full set of vectors completing $w$ into~$H$.
\end{defn}

It is straightforward to see that $\Cab{w}{H}$ is either empty or a coset of $L_{H}=H\cap \Zm$.

\begin{lem}\label{lem: completions are cosets}
Let $H$ be a subgroup of $\FTA$, and let $w\in \Free$. Then,
 \begin{equation}
\Cab{w}{H} \,=\, \bigg\{\! \begin{array}{ll} \varnothing & \text{if } w\notin H \pi, \\ \cab{w}{H,\sigma} + L_{H} & \text{if } w \in H \pi \,, \end{array} 
 \end{equation}
where $\sigma$ is any section of $\pi_{|H}$. \qed
\end{lem}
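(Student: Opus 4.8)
The plan is to split on whether $w$ lies in $H\pi$, disposing of the empty case at once and, in the remaining case, reducing the coset description to the abelian subgroup $L_H=H\cap\Zm$ by means of the section $\sigma$.

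First I would handle $w\notin H\pi$. Any element of $H$ with free part $w$ would witness $w\in H\pi$, since $(w\t^\mathbf{a})\pi=w$; so if $w\notin H\pi$ there is no $\mathbf{a}\in\Zm$ with $w\t^\mathbf{a}\in H$, and therefore $\Cab{w}{H}=\varnothing$, which is exactly the first branch.

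For the case $w\in H\pi$ I would fix any section $\sigma$ of $\pi_{|H}$ and take the single element $w\sigma\in H$ as the pivot of the argument. Since $\sigma\pi=\id$ on $H\pi$, the free part of $w\sigma$ is $w$, so its normal form is $w\sigma=w\t^{w\sigma\tau}=w\t^{\cab{w}{H,\sigma}}$; this already shows that $\cab{w}{H,\sigma}$ is a completion of $w$ into $H$, and it will serve as the base point of the coset. The two inclusions then follow from the normal-form product rule $(u\t^\mathbf{a})(v\t^\mathbf{b})=uv\,\t^{\mathbf{a}+\mathbf{b}}$. For $\supseteq$, given $\mathbf{v}\in L_H$ I would rewrite $w\t^{\cab{w}{H,\sigma}+\mathbf{v}}=(w\sigma)\,\t^{\mathbf{v}}$, a product of two elements of $H$ (the first by construction, the second because $L_H\subseteq H$), hence itself in $H$. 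For $\subseteq$, given any $\mathbf{a}\in\Cab{w}{H}$ I would form $(w\t^\mathbf{a})(w\sigma)^{-1}$, which lies in $H$ and, after the normal-form computation, equals $\t^{\mathbf{a}-\cab{w}{H,\sigma}}$; having trivial free part it belongs to $H\cap\Zm=L_H$, so that $\mathbf{a}\in\cab{w}{H,\sigma}+L_H$.

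I do not expect a genuine obstacle here beyond careful bookkeeping in the (noncommutative) normal form: the one subtlety worth flagging is that $\sigma$ is a section of $\pi_{|H}$ and is thus defined only on $H\pi$, which is precisely why the coset analysis is confined to the case $w\in H\pi$. Finally, since the set $\Cab{w}{H}$ produced this way makes no reference to $\sigma$, the established equality shows a posteriori that the coset $\cab{w}{H,\sigma}+L_H$ does not depend on the chosen section, justifying the phrase ``any section'' in the statement.
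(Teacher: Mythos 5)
Your proof is correct and is precisely the routine verification that the paper omits (the lemma is stated with a \qed and prefaced only by the remark that it is ``straightforward''): the empty case, the pivot element $w\sigma=w\t^{\cab{w}{H,\sigma}}$, and the two inclusions via the normal-form product rule are exactly the intended argument. Your closing observation that the resulting coset is independent of the choice of $\sigma$ is a correct and worthwhile justification of the phrase ``any section.''
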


As defined in~\cite{delgado_algorithmic_2013}, a \emph{basis} of a subgroup $H\leqslant \FTA$ is a subset of $H$ of the form
 \begin{equation} \label{eq: FTFA basis}
\Basis \,=\, \left\{ u_1\t^\mathbf{a_1}, u_2\t^\mathbf{a_2},\ldots ; \t^\mathbf{b_1},\ldots, \t^\mathbf{b_s} \right\}\,,
 \end{equation}
where $\{u_1, u_2, \ldots \}$ is a (finite or infinite) \emph{free basis} of $H\pi\leqslant \Free[n]$, $\mathbf{a_1}, \mathbf{a_2},\ldots \in \ZZ^m$ are integral vectors, and $\{\t^\mathbf{b_1}, \ldots,\t^\mathbf{b_s}\}$ is a \emph{free-abelian basis} of the intersection $L_H =H\cap \ZZ^m$; we use a semicolon as a notational device to separate the purely abelian elements in $\Basis$. In~\cite[Sect. 1]{delgado_algorithmic_2013} it is shown that every subgroup $H\leqslant \FTA$ admits such a basis (computable from any given set of generators in the finitely generated case). We use the terms \emph{free basis}, \emph{free-abelian basis} and just \emph{basis}, depending on whether we refer to a basis for the free part, the free-abelian part, or the whole FTFA group, respectively.

We denote $r=\rk(H\pi)$ and $s=\rk(L_H)$. Note that $0\leq r\leq \infty$, while $0\leq s\leq m$, with the first dots in \Cref{eq: FTFA basis} representing countably many elements in the case $r=\infty$. Note that, then, $H\isom \Free[r] \times \ZZ^s$, and $H$ is finitely generated if and only if $r<\infty$. Finally, we introduce some more notation for later use: let $\rho_H\colon H\pi \onto \ZZ^{r}$ be the abelianization map (not to be confused with the restriction to $H$ of the global abelianization map $\Free[n] \onto \ZZ^n$), and let $A_H\colon \ZZ^r\to \ZZ^m$ be the so-called \defin{completion homomorphism}, sending the canonical $j$-th vector to $\mathbf{a_j}$. 

When $r=\rk (H\pi)<\infty$, we write
 \begin{equation*}
\mathbf{A}_H=\left( \!\begin{array}{c} \mathbf{a_1} \\ \vdots \\ \mathbf{a_r} \end{array}\!\right) \in M_{r\times m}(\ZZ), \quad  \quad \mathbf{L}_H =\left( \!\begin{array}{c} \mathbf{b_1} \\ \vdots \\ \mathbf{b_s} \end{array}\!\right) \in M_{s\times m}(\ZZ),
 \end{equation*}
called the \defin{completion matrix} of $H$ (\wrt $\Basis$), and the \defin{matrix} of $L_H$ (\wrt $\Basis$), respectively; note that $\mathbf{A}_H$ is the matrix of the completion homomorphism $A_H$ \wrt the canonical bases, and that the row space of $\mathbf{L}_H$ (\ie the subspace of $\ZZ^m$ generated by the rows of $\mathbf{L}_H$) is precisely $L_H$.

Below we see that full completions are easily computable from any given finite basis for the subgroup.

\begin{cor} \label{cor: compl computable}
If $\Basis =\big\{ u_1\t^\mathbf{a_1}, u_2 \t^{\mathbf{a_2}},\ldots ;\t^\mathbf{b_1},\ldots, \t^\mathbf{b_s} \big\}$ is a basis of $H\leqslant \FTA$ then, for every $w\in H\pi$, $\Cab{w}{H}=w\rho_H A_H +\gen{\mathbf{b_1}, \ldots,\mathbf{b_s}}$. 
\end{cor}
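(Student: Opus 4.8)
The plan is to reduce the statement to \Cref{lem: completions are cosets} by exhibiting a particularly convenient section of $\pi_{|H}$ and computing its associated $\sigma$-completion explicitly. First I would introduce the \emph{diagonal} section $\sigma \colon H\pi \to H$ determined by $u_i \sigma = u_i\t^{\mathbf{a_i}}$ for each $i = 1, \ldots, r$. Since $\{u_1, \ldots, u_r\}$ is a free basis of the free group $H\pi$, this assignment extends uniquely to a homomorphism; and it is indeed a section because $(u_i\t^{\mathbf{a_i}})\pi = u_i$, whence $\sigma\pi = \id_{H\pi}$.

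The heart of the argument is the computation of $w\sigma\tau$ for an arbitrary $w \in H\pi$. Expressing $w$ as a word $u_{i_1}^{\epsilon_1}\cdots u_{i_\ell}^{\epsilon_\ell}$ in the basis $\{u_1,\ldots,u_r\}$, and using the multiplication rule $(u\t^\mathbf{a})(v\t^\mathbf{b}) = uv\,\t^{\mathbf{a}+\mathbf{b}}$ together with $(u_i\t^{\mathbf{a_i}})^{-1} = u_i^{-1}\t^{-\mathbf{a_i}}$, I would obtain
\[
w\sigma \,=\, u_{i_1}^{\epsilon_1}\cdots u_{i_\ell}^{\epsilon_\ell}\, \t^{\sum_{j} \epsilon_j \mathbf{a_{i_j}}} \,=\, w\, \t^{\sum_j \epsilon_j \mathbf{a_{i_j}}}.
\]
The key point, made possible by the commutativity of $\ZZ^m$, is that the abelian exponent $\sum_j \epsilon_j\mathbf{a_{i_j}}$ depends only on the total exponent of each $u_i$ in $w$, that is, on the abelianization $\mathbf{w} = (c_1,\ldots,c_r)$ of $w$ with respect to $\{u_1,\ldots,u_r\}$. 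Collecting equal generators then yields
\[
\sum_j \epsilon_j\, \mathbf{a_{i_j}} \,=\, \sum_{i=1}^r c_i\, \mathbf{a_i} \,=\, \mathbf{w}\mathbf{A},
\]
so that $\cab{w}{H,\sigma} = w\sigma\tau = \mathbf{w}\mathbf{A}$.

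Finally I would invoke \Cref{lem: completions are cosets} with this particular section: since $w \in H\pi$, it gives $\Cab{w}{H} = \cab{w}{H,\sigma} + L_H = \mathbf{w}\mathbf{A} + L_H$, and $L_H = H\cap\ZZ^m = \gen{\mathbf{b_1},\ldots,\mathbf{b_s}}$ precisely because $\{\t^{\mathbf{b_1}},\ldots,\t^{\mathbf{b_s}}\}$ is, by hypothesis, a free-abelian basis of $L_H$. This delivers the claimed equality $\Cab{w}{H} = \mathbf{w}\mathbf{A} + \gen{\mathbf{b_1},\ldots,\mathbf{b_s}}$. No genuine obstacle arises: the only point requiring care is the well-definedness of the exponent computation, namely verifying that $w\sigma\tau$ factors through the abelianization of $H\pi$ and is thus independent of the chosen word for $w$ --- which is exactly where the commutativity of the free-abelian factor is used.
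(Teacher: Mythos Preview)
Your proof is correct and follows exactly the same approach as the paper: define the section $\sigma\colon H\pi \to H$, $u_i \mapsto u_i\t^{\mathbf{a_i}}$, determined by the basis $\Basis$, then apply \Cref{lem: completions are cosets} to obtain $\Cab{w}{H} = w\sigma\tau + L_H = \mathbf{w}\mathbf{A} + \gen{\mathbf{b_1},\ldots,\mathbf{b_s}}$. Your version is simply more explicit in verifying the intermediate identity $w\sigma\tau = \mathbf{w}\mathbf{A}$, which the paper leaves implicit.
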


\begin{proof}
Let $\sigma$ be the section of $\pi_{\mid H}$ given by $\Basis$; namely, $\sigma \colon H\pi \to H$, $u_i \mapsto u_i \t^{\mathbf{a_i}}$. Then, applying \Cref{lem: completions are cosets} to $\sigma$ we have that, for every $w\in H\pi$, $\Cab{w}{H}=w\sigma \tau +L_{H}=w\rho_H A_H +\gen{\mathbf{b_1}, \ldots, \mathbf{b_s}}$, since $\sigma\tau=\rho_H A_H \colon H\pi \to \ZZ^m$, $u_i\mapsto \mathbf{a_i}$.
\end{proof}

In~\cite{delgado_algorithmic_2013}, Delgado and Ventura studied intersections between \emph{two} finitely generated subgroups of $\FTA$ given by respective bases (alternatively, see~\cite{delgado_stallings_2022,delgado_extensions_2017} for a geometric description of the subgroups of $\FTA$ and their intersections in the spirit of Stallings automata). We briefly summarize the main results here, with the goal of generalizing them to the case of \emph{finitely many}, say $k\geq 2$, \emph{arbitrary} subgroups $H_1,\ldots ,H_k\leqslant \FTA$.

Consider two subgroups $H_1,H_2\leqslant \FTA$ and take (finite or infinite) bases for them 
 \begin{align*}
H_1 & \,=\, \langle u_1 \t^\mathbf{a_1}, u_2 \t^\mathbf{a_2},\ldots ; \t^\mathbf{b_1},\ldots,\t^\mathbf{b_{s_1}}\rangle, \\ H_2 &\,=\, \langle v_1 \t^\mathbf{a'_1}, v_2 \t^\mathbf{a'_2},\ldots ; \t^\mathbf{b'_1},\ldots,\t^\mathbf{b'_{s_2}} \rangle,
 \end{align*}
where $\{u_1, u_2, \ldots \}$ and $\{v_1,v_2,\ldots \}$ are (finite or infinite) sets of freely independent elements in~$\Fn$, $\mathbf{a_i}, \mathbf{a'_i}, \mathbf{b_i}, \mathbf{b'_i}\in \ZZ^m$, and $\{\t^\mathbf{b_1},\ldots,\t^\mathbf{b_{s_1}}\}$ and $\{ \t^\mathbf{b'_1}, \ldots, \t^\mathbf{b'_{s_2}}\}$ are free-abelian bases for $L_1=H_1\cap \ZZ$ and $L_2=H_2\cap \ZZ$, respectively. Observe that $(H_1\cap H_2)\pi\leqslant H_1\pi \cap H_2\pi$, with the equality not being true in general: $H_1\pi \cap H_2\pi$ is the set of elements from $\Free[n]$ admitting completions both in $H_1$ and in $H_2$, while $(H_1\cap H_2)\pi$ is the set of elements from $\Free[n]$ which admit a \emph{common} completion in both $H_1$ and $H_2$. In~\cite{delgado_algorithmic_2013}, the diagram in \Cref{fig: int 2} was used in order to describe the key subgroup $(H_1\cap H_2)\pi$.

\begin{figure}[H]
\centering
\begin{tikzcd}[row sep=25pt, column sep=25pt,ampersand replacement=\&]
\& \& (H_1 \cap H_2) \pi \\[-28pt]
\& \& \rotatebox[origin=c]{270}{$\normaleq$} \\[-28pt]
\& H_1 \pi \arrow[d,->>,"\rho_1"]
\&H_1 \pi \cap H_2 \pi \arrow[d,->>,"\rho"]
 \arrow[r,hook,"\iota_2"] \arrow[l,hook',"\iota_1"']
    \& H_2\pi \arrow[d,->>,"\rho_2"]
    \\[10pt]
\&\ZZ^{r_1} \arrow[rd,->,"A_1"']   \&\ZZ^{r}
    \arrow[ur, phantom, "///"]
    \arrow[ul, phantom, "///"]
    \arrow[d,->,"R"]
    \arrow[r,->,"P_2"]
    \arrow[l,->,"P_1"']\&
    \ZZ^{r_2} \arrow[ld,->,"A_2"]\\
\& \&\Zm\\[-28pt]
\& \& \rotatebox[origin=c]{90}{$\leqslant$} \\[-28pt]
\& \& L_1 + L_2
   \end{tikzcd}
   \caption{Intersection diagram for $H_1 \cap H_2$} \label{fig: int 2}
\end{figure}

Here, $\iota_1$ and $\iota_2$ are the natural inclusions; $\rho_1$, $\rho_2$ and $\rho$ are the corresponding abelianization maps with respect to the chosen bases in the domains and codomains (not to be confused with the restrictions of the global abelianization $\Free[n] \onto \mathbb{Z}^n$); $P_1$ and $P_2$ are the abelianizations of $\iota_1$ and $\iota_2$ (note that, although $\iota_1$ and $\iota_2$ are injective, $P_1$ and $P_2$ may very well not be so); and $A_1$ and $A_2$ are the completion homomorphisms, sending the $i$-th canonical vector to $\mathbf{a_i}$ and to $\mathbf{a'_i}$, respectively. In general, the ranks $r_1=\rk(H_1\pi)$, $r_2=\rk(H_2\pi)$, and $r=\rk(H_1\pi\cap H_2\pi)$ may be finite or infinite, with the restriction coming from the Howson property for free groups: $r$ is finite whenever $r_1$ and $r_2$ are so. Finally, $R$ is the linear map given by $R=P_1A_1-P_2A_2$. Using this scheme, Delgado--Ventura~\cite{delgado_algorithmic_2013} proved the following result, announced there for finitely generated subgroups $H_1$, $H_2$, but valid in full generality with essentially the same proof; see also \cite[Section 4]{delgado_stallings_2022} for a more detailed (geometric) analysis of this and related facts.

\begin{prop}[{Delgado--Ventura, \cite{delgado_algorithmic_2013}}]\label{prop: 2-int}
For any two subgroups $H_1,H_2\leqslant \FTA$ (and using the above notation), $(H_1 \cap H_2) \pi =(L_1 +L_2)R\preim \rho\preim$, which is a normal subgroup of $H_1 \pi \cap H_2\pi$. $\Box$
\end{prop}
It is well-known that, for $r<\infty$, a normal subgroup $N\normaleq \Free[r]$ is finitely generated if and only if either $N=\Trivial$ or $N\leqfi \Free[r]$; whereas, a normal subgroup $N\normaleq \Free[\infty]$ is finitely generated if and only if $N= \Trivial$. From this observation and \Cref{prop: 2-int}, we can immediately deduce the following characterization for the finite generability of the intersection of two arbitrary subgroups of $\FTA$, fully in abelian terms; see~\cite[Lemma 4.17]{delgado_stallings_2022} for details.

\begin{cor}[{Delgado--Ventura, \cite{delgado_algorithmic_2013}}] \label{cor: 2-int fg}
For any two subgroups $H_1,H_2\leqslant \FTA$ (and using the above notation), $H_1 \cap H_2$ is finitely generated if and only if either (i) $r=0,1$; 
or (ii) $2\leq r<\infty$ and $(L_1 +L_2)R\preim \leqfi \ZZ^r$.\qed
\end{cor}

\begin{rem}
Note that if $r=\rk(H_1\pi \cap H_2\pi) = \infty$ then $(H_1 \cap H_2) \pi$ is never finitely generated, since $(H_1 \cap H_2) \pi = (L_1 +L_2)R\preim \rho\preim \geqslant [\Free[\infty], \Free[\infty]] \neq \Trivial$ and hence is a nontrivial normal subgroup of $\Free[\infty]$, thus not being finitely generated.
\end{rem}

When $H_1$ and $H_2$ are finitely generated, each of $r_1,r_2$ and $r$ are finite and the homomorphisms $P_1,P_2,A_1,A_2$ and $R$ can be represented by their corresponding matrices (\wrt chosen bases), denoted by $\mathbf{P_1}, \mathbf{P_2}, \mathbf{A_1}, \mathbf{A_2}$ and  $\mathbf{R}= \mathbf{P_1} \mathbf{A_1}-\mathbf{P_2}\mathbf{A_2}$, respectively (where $\mathbf{A_1}=\mathbf{A}_{H_1}$ and $\mathbf{A_2}=\mathbf{A}_{H_2}$ are the corresponding completion matrices). In this finitely generated case, all the conditions in \Cref{cor: 2-int fg} are algorithmically checkable and \Cref{thm: SIP FTA} follows.

\bigskip

Let us now extend the previous setup to finitely many arbitrary subgroups $H_1, \ldots, H_k\leqslant \FTA$ instead of two. In order to study the subgroup intersection $H_1\cap \cdots \cap H_k$, we first fix a basis for each subgroup, say 
 \begin{equation}
H_i =\langle u_{i,1}\t^\mathbf{a_{i,1}}, u_{i,2}\t^\mathbf{a_{i,2}}, \ldots ; \t^\mathbf{b_{i,1}}, \ldots ,\t^\mathbf{b_{i,s_i}}\rangle,
 \end{equation}
for every $i=1,\ldots,k$; here, the ranks $r_i=\rk(H_i\pi)$ and $r=\rk(H_1\pi\cap \cdots \cap H_k\pi)$ may be finite or infinite (again, with the dots above representing countably many elements in the infinite case). Let us consider the following notation, which is summarized in \Cref{fig: int pr} below. We denote:
\begin{enumerate}[ind]
\item the abelianization maps by $\rho_i \colon H_i\pi \onto \ZZ^{r_i}$, for $i=1,\ldots ,k$; also $\rho\colon \bigcap_{j=1}^k H_j\pi \onto \ZZ^{r}$; 

\item the inclusion maps by $\iota_i \colon \bigcap_{j=1}^k H_j\pi \into H_i\pi$, for $i=1,\ldots ,k$; 

\item the abelianization of the inclusion maps $\iota_i$ by $P_i \colon \ZZ^r \rightarrow \ZZ^{r_i}$, for $i=1,\ldots ,k$;

\item the completion homomorphisms by $A_i\colon \ZZ^{r_i}\to \ZZ^m$ (mapping the canonical $j$-th vector to $\mathbf{a_{i,j}}\in \ZZ^m$, for $i=1,\ldots ,k$ and $j\geq 1$); and  

\item the intersections of the subgroups $H_i$ with the free-abelian part as $L_i =H_i\cap \ZZ^m=\langle \mathbf{b_{i,1}},\ldots , \mathbf{b_{i,s_i}} \rangle$, where $0\leq s_i=\rk(L_i)\leq m$, and call $\mathbf{L_i}$ the $s_i \times m$ integral matrix having $\mathbf{b_{i,j}}$ as $j$-th row, $i=1,\ldots ,k$.
\end{enumerate}

\begin{figure}[H]
\centering
\begin{tikzcd}[row sep=25pt, column sep=25pt,ampersand replacement=\&]
 \big(\bigcap_{j=1}^{k}  H_j\big)\pi \,\normaleq\  
 \&[-30pt]\bigcap_{j=1}^{k}  H_j \pi
 \arrow[d,->>,"\rho"]
 \arrow[r,hook,"\iota_i"]
    \& H_i\pi
    \arrow[d,->>,"\rho_i"]
    \& (\forall i=1,\ldots,k)
    \\[10pt]
    \&\ZZ^{r}
    \arrow[ur,phantom,"///"]
    \arrow[r,->,"P_i"]\&
    \ZZ^{r_i} \arrow[ld,->,"A_i"]
    \\
    L_i \,\leqslant \hspace{-65pt} \&\Zm
   \end{tikzcd}
   \caption{The intersection diagram for $\bigcap_{j=1}^{k} H_j$}
   \label{fig: int pr}
\end{figure}

In the $k=2$ scenario, \Cref{prop: 2-int} crucially reduces the finite generability of the intersection of two subgroups of $\FTA$ to an abelian condition, which is easily decidable in the finitely generated case. The lemma below is a first necessary ingredient to generalize the aforementioned reduction to multiple intersections.

\begin{lem}\label{lem: nonempy multiple intersection}
Let $\mathbf{p_1},\ldots, \mathbf{p_k} \in \ZZ^m$, and $L_1,\ldots, L_k \leqslant \ZZ^{m}$. Then, the integral affine varieties $\mathbf{p_1}+L_1, \ldots ,\mathbf{p_k}+L_k$ intersect nontrivially (\ie $\bigcap_{j=1}^{k} (\mathbf{p_j}+L_j) \neq \varnothing$) if and only if 
 \begin{equation*}
\big( \mathbf{p_2}-\mathbf{p_1} \mid \mathbf{p_3}-\mathbf{p_2} \mid \cdots \mid \mathbf{p_k}-\mathbf{p_{k-1}}) \in \im (\mathbf{L}),
 \end{equation*}
where
 \begin{equation*}
\mathbf{L}=\left( \!\!\begin{array}{rrrcc} \mathbf{L_1} & & & & \\ -\mathbf{L_2} & \mathbf{L_2} & & & \\ & -\mathbf{L_3} & \mathbf{L_3} & & \\ & & \hspace{-5pt}\ddots & \hspace{-1pt} \ddots & \\ & & & -\mathbf{L_{k-1}} & \mathbf{L_{k-1}} \\ & & & & -\mathbf{L_{k}} \end{array}\right)\in M_{(\ssum
_j s_j )\times (k-1)m} (\ZZ)\,,
 \end{equation*}
and, for each $i=1,\ldots ,k$, $\mathbf{L_i}$ is a $s_i\times m$ integral matrix with row space $L_i$.
\end{lem}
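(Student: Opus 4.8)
The plan is to prove the equivalence by unwinding the definitions on both sides until they meet in one and the same combinatorial condition on the vectors $\mathbf{p_i}$ and the lattices $L_i$. Everything holds integrally since the argument never requires division.

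First I would reformulate the left-hand side. A point lies in $\bigcap_{i=1}^{k}(\mathbf{p_i}+L_i)$ precisely when there is a common $\mathbf{x}\in\ZZ^m$ together with vectors $\mathbf{l_i}\in L_i$ satisfying $\mathbf{x}=\mathbf{p_i}+\mathbf{l_i}$ for every $i$. Eliminating the auxiliary point $\mathbf{x}$ by subtracting the equalities for consecutive indices, this is equivalent to the existence of $\mathbf{l_1}\in L_1,\ldots,\mathbf{l_k}\in L_k$ solving the telescoping system $\mathbf{p_{j+1}}-\mathbf{p_j}=\mathbf{l_j}-\mathbf{l_{j+1}}$ for $j=1,\ldots,k-1$. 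The converse direction (that any solution of this system recovers a genuine common point) follows by setting $\mathbf{x}=\mathbf{p_1}+\mathbf{l_1}$ and checking inductively that $\mathbf{p_i}+\mathbf{l_i}=\mathbf{x}$ for all $i$; this induction is the single spot where one must confirm that passing to consecutive differences loses no information.

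Next I would identify the right-hand side with the very same system. Since matrices act on the right, $\im(\mathbf{L})$ is the set of rows $\mathbf{w}\mathbf{L}$ with $\mathbf{w}\in\ZZ^{\sum_i s_i}$; writing $\mathbf{w}=(\mathbf{w_1}\mid\cdots\mid\mathbf{w_k})$ block-wise according to the row blocks of $\mathbf{L}$, the block pattern shows that the $j$-th block of $\mathbf{w}\mathbf{L}$ equals $\mathbf{w_j}\mathbf{L_j}-\mathbf{w_{j+1}}\mathbf{L_{j+1}}$, because column block $j$ receives exactly the contribution $\mathbf{L_j}$ from row block $j$ and $-\mathbf{L_{j+1}}$ from row block $j+1$. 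As $\mathbf{w_i}$ ranges over $\ZZ^{s_i}$, the product $\mathbf{w_i}\mathbf{L_i}$ ranges over the full row space $L_i$, so
\[
\im(\mathbf{L})=\big\{(\mathbf{l_1}-\mathbf{l_2}\mid\mathbf{l_2}-\mathbf{l_3}\mid\cdots\mid\mathbf{l_{k-1}}-\mathbf{l_k}) : \mathbf{l_i}\in L_i\big\}.
\]
Hence $(\mathbf{p_2}-\mathbf{p_1}\mid\cdots\mid\mathbf{p_k}-\mathbf{p_{k-1}})\in\im(\mathbf{L})$ holds if and only if there are $\mathbf{l_i}\in L_i$ with $\mathbf{p_{j+1}}-\mathbf{p_j}=\mathbf{l_j}-\mathbf{l_{j+1}}$ for every $j$ — precisely the system obtained in the previous paragraph.

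Combining the two reformulations closes the equivalence. I expect no genuine obstacle: the whole argument is a definition chase, and the only points demanding care are bookkeeping ones — keeping the right-action convention straight when computing $\im(\mathbf{L})$ as a row space, and matching signs and indices in the block decomposition of $\mathbf{L}$ so that column block $j$ yields exactly $\mathbf{l_j}-\mathbf{l_{j+1}}$.
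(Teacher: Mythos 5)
Your proposal is correct and follows essentially the same route as the paper: eliminate the common point via the telescoping system $\mathbf{p_{j+1}}-\mathbf{p_j}=\boldsymbol{\ell_j}-\boldsymbol{\ell_{j+1}}$, then identify $\im(\mathbf{L})$ block-wise as the set of tuples $(\boldsymbol{\ell_1}-\boldsymbol{\ell_2}\mid\cdots\mid\boldsymbol{\ell_{k-1}}-\boldsymbol{\ell_k})$ with $\boldsymbol{\ell_i}\in L_i$. The only difference is that you make explicit the inductive check that the consecutive differences lose no information, which the paper leaves implicit.
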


\begin{proof} \phantom{\qedhere} \hspace{-.9cm}
Observe that $\bigcap_{j=1}^{k} (\mathbf{p_j}+L_j) \neq \varnothing$ if and only if $\mathbf{p_1}+\boldsymbol{\ell_1}=
\mathbf{p_2}+\boldsymbol{\ell_2}=\cdots =\mathbf{p_k}+\boldsymbol{\ell_k}$, for some $\boldsymbol{\ell_1}\in L_1$, $\boldsymbol{\ell_2}\in L_2$, $\ldots$, $\boldsymbol{\ell_k}\in L_k$. But this is the same as the existence of $\boldsymbol{\ell_1}\in L_1$, $\boldsymbol{\ell_2}\in L_2$, $\ldots$, $\boldsymbol{\ell_k}\in L_k$ such that $\mathbf{p_2}-\mathbf{p_1}= \boldsymbol{\ell_1}-\boldsymbol{\ell_2}$, $\mathbf{p_3}-\mathbf{p_2} =\boldsymbol{\ell_2}-\boldsymbol{\ell_3}$, $\ldots$, $\mathbf{p_{k}}-\mathbf{p_{k-1}} =\boldsymbol{\ell_{k-1}}-\boldsymbol{\ell_{k}}$. Or, equivalently, the existence of $\mathbf{a_1}\in \ZZ^{s_1}$, $\mathbf{a_2}\in \ZZ^{s_2}$, $\ldots$, $\mathbf{a_k}\in \ZZ^{s_k}$ such that
 \[
\left. \begin{array}{rcl} \mathbf{p_2}-\mathbf{p_1}\,=\,\mathbf{a_1}\mathbf{L_1}- \mathbf{a_2}\mathbf{L_2} & = & (\mathbf{a_1} \mid \mathbf{a_2}) \left( \begin{matrix} \phantom{-}\mathbf{L_1} \\ -\mathbf{L_2} \end{matrix} \right) \\ & \vdots & \\ \mathbf{p_k}-\mathbf{p_{k-1}}\,=\,\mathbf{a_{k-1}}\mathbf{L_{k-1}}- \mathbf{a_k}\mathbf{L_k} & = & (\mathbf{a_{k-1}} \mid \mathbf{a_k}) \left( \begin{matrix}  \mathbf{L_{k-1}} \\ -\mathbf{L_k} \end{matrix} \right) \end{array} \right\}
 \]
Putting these conditions together, this is the same as saying that there exists $\mathbf{a}\in \ZZ^{{\Sigma_j s_j}}$ such that
 \begin{equation}
(\mathbf{p_2}-\mathbf{p_1} \mid \mathbf{p_3}-\mathbf{p_2} \mid \cdots \mid \mathbf{p_k}-\mathbf{p_{k-1}}) =\mathbf{a} \mathbf{L}
\in \im (\mathbf{L}). \tag*{\qed}
 \end{equation}
\end{proof}

Let us now combine \Cref{lem: nonempy multiple intersection} with the characterization coming from the above diagram to obtain the multiple version of \Cref{prop: 2-int} and,  as a consequence, the decidability of the multiple subgroup intersection problem for free-abelian times free groups. 
In order to do so it is convenient to define the \defin{stack homomorphism} as
 \begin{equation} \label{eq: R def}
\begin{array}{rcl}
R=(R_2, \ldots ,R_k)\colon \ZZ^r & \to & \ZZ^m\oplus \stackrel{(k-1)}{\ldots} \oplus \ZZ^m=\ZZ^{(k-1)m} \\ w & \mapsto & (wR_2, \ldots ,wR_k),
\end{array}
 \end{equation}
where $R_i=P_iA_i-P_{i-1}A_{i-1}\colon$ $\ZZ^r \to \ZZ^m$, $i=2, \ldots ,k$. Note that, for the case $k=2$, $R=R_2 \colon \ZZ^r \to \ZZ^{(2-1)m}=\ZZ^m$, $w\mapsto w(P_2A_2-P_1A_1)$ agrees with the homomorphism $R$ used in \Cref{prop: 2-int} and \Cref{cor: 2-int fg}.

The introduced scheme and notation (see \Cref{fig: int pr} and \Cref{eq: R def}) together with \Cref{lem: nonempy multiple intersection} allows us to describe the projection $(H_1 \cap \cdots \cap H_k) \pi$ in a particularly convenient way, which unveils its character (and hence that of the intersection $H_1 \cap \cdots \cap H_k$).

\begin{thm}\label{prop: mint fg iff}
For any $k\geq 2$ subgroups $H_1, \ldots, H_k\leqslant \FTA$, $(H_1\cap \cdots \cap H_k)\pi=(\im (\mathbf{L})) R\preim\rho\preim$, which is a normal subgroup of $H_1\pi \cap \cdots \cap H_k\pi$. In particular, $H_1\cap \cdots \cap H_k$ is finitely generated if and only if one of the following conditions holds: 
 \begin{enumerate}[ind]
\item \label{item: r=0} $r=0,1$; 
\item \label{item: r} $2\leq r<\infty$ and $\rk \big((\im (\mathbf{L})) R\preim\big)=r$;
 \end{enumerate}
 where $r = \rk(H_1 \pi \cap \cdots \cap H_k \pi)$.
\end{thm}

\begin{proof}
Consider a basis for each of the given subgroups, $H_i =\langle u_{i,1}\t^\mathbf{a_{i,1}}, u_{i,2}\t^\mathbf{a_{i,2}}, \ldots ; \t^\mathbf{b_{i,1}}, \ldots ,\t^\mathbf{b_{i,s_i}}\rangle$, $i=1,\ldots ,k$, and keep all the notation introduced above (see Figure~\ref{fig: int pr} and \Cref{eq: R def}). Then,
 \begin{align*}
(H_1 \cap \cdots \cap H_k)\pi \,=\, & \big\{w \in
\textstyle{\bigcap_{j=1}^{k} H_j\pi \mid w\text{ has a common completion into $H_1, \ldots ,H_k$}} \big\} \\[2pt] \,=\  & \{w \in
\textstyle{\bigcap_{j=1}^{k} H_j\pi \mid (w\iota_1 \rho_1 A_1 +L_1) \cap \cdots \cap(w\iota_k \rho_k A_k +L_k) \neq \varnothing} \} \\[2pt] \,=\  & \{w \in
\textstyle{\bigcap_{j=1}^{k} H_j\pi \mid (w\rho P_1 A_1 +L_1) \cap \cdots \cap(w\rho P_k A_k +L_k) \neq \varnothing} \} 
\\[2pt]
\,=\  & \{w \in
\textstyle{\bigcap_{j=1}^{k} H_j\pi \mid w\rho R\in \im (\mathbf{L})} \} \\[2pt]
\,=\  & \big(\im (\mathbf{L})\big) R\preim \rho\preim,
 \end{align*}
where the second equality follows from \Cref{cor: compl computable}, the third one from the commutativity of the diagram in \Cref{fig: int pr}, and the fourth one from \Cref{lem: nonempy multiple intersection}. Note that, then, $(H_1 \cap \cdots \cap H_k)\pi$ is a normal subgroup of $H_1\pi \cap \cdots \cap H_k\pi$.

In particular, if $r=0,1$  (\ie if $H_1 \pi \cap \cdots \cap H_k \pi \isom \Free[r]$ is cyclic) then $(H_1 \cap \cdots \cap H_k)\pi$ is cyclic and hence $H_1 \cap \cdots \cap H_k$ is necessarily finitely generated. Otherwise (\ie if $2\leq r\leq \infty$), $(H_1 \cap \cdots \cap H_k)\pi$ contains the (nontrivial) commutator $[\Free[r], \Free[r]]$ and so, it is nontrivial. This implies that, when $r=\infty$, $(H_1 \cap \cdots \cap H_k )\pi$ is never finitely generated; whereas, for $2\leq r<\infty$,
 \begin{align*}
H_1 \cap \cdots \cap H_k \text{ is finitely generated}
&\,\Leftrightarrow\,
(H_1 \cap \cdots \cap H_k)\pi 
\text{ is finitely generated} \\
&\,\Leftrightarrow\,
\big(\im (\mathbf{L})\big) R\preim \rho\preim
\text{ is finitely generated}\\
&\,\Leftrightarrow\,
|\Free[r] : \big(\im (\mathbf{L})\big) R\preim \rho\preim| < \infty\\
&\,\Leftrightarrow\,
|\ZZ^r : \big(\im (\mathbf{L})\big) R\preim| < \infty
\\
&\,\Leftrightarrow\,
\rk((\im (\mathbf{L})) R\preim )=r,
 \end{align*}
where we have used 
\begin{enumerate*}[ind]
\item \Cref{rem: H fg iff Hpi fg}, in the first equivalence;
\item that $(H_1 \cap \cdots \cap H_k)\pi = \big(\im (\mathbf{L})\big) R\preim \rho\preim$, in the second equivalence;
\item that a nontrivial normal subgroup of a finitely generated free group is finitely generated if and only if it has finite index, in the third equivalence;
\item that the abelianization map $\rho\colon \Free[r] \onto \ZZ^r$ is surjective, in the fourth equivalence; and
\item that a subgroup of $\ZZ^r$ is of finite index if and only if it has maximum rank, in the fifth equivalence.
\end{enumerate*}
     
Putting together the obtained conditions, we reach the claimed result. 
\end{proof}

Finally, let's see that, as a consequence of \Cref{prop: mint fg iff}, when all the input subgroups are finitely generated (that is, when $r_1,\ldots ,r_k<\infty$ and so, $r<\infty$), we can decide algorithmically whether $H_1\cap \cdots \cap H_k$ is finitely generated and, in case it is, compute a basis for it. This generalizes \Cref{thm: SIP FTA} to multiple intersections.

\begin{thm}\label{thm: MSIP}
The Multiple Subgroup Intersection Problem for FTFA groups is computable. That is, there exists an algorithm which, on input 
a finite number of (finite sets of generators for) subgroups $H_1, \ldots, H_k\leqfg \FTA$, decides whether the intersection $H_1\cap \cdots \cap H_k$ is finitely generated and, in the affirmative case, computes a basis for it.
\end{thm}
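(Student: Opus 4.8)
The plan is to convert the finite-generability criterion of \Cref{prop: mint fg iff} into an effective procedure by realizing every object of the full intersection diagram (\Cref{fig: int k}) through explicit computations, and then, in the affirmative case, to read off a basis from the description $(H_1\cap\cdots\cap H_k)\pi=(\im(\mathbf{L}))\mathbf{R}\preim\rho\preim$ obtained in its proof. I treat the decision and the basis-construction parts separately.

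\textbf{Decision part.} First, by \cite[Proposition 1.9]{delgado_algorithmic_2013}, I would compute a basis of each $H_i$ from the given generators, reading off the completion matrices $\mathbf{A_i}$ and the matrices $\mathbf{L_i}$ whose row space is $L_i=H_i\cap\Zm$. Projecting by $\pi$, each $H_i\pi\leqslant\Fn$ is a finitely generated subgroup of a free group given by a basis $\{u_{i,1},\ldots,u_{i,r_i}\}$; by standard free-group algorithmics (effective Howson property via Stallings automata) I compute a free basis $\{w_1,\ldots,w_r\}$ of $H_1\pi\cap\cdots\cap H_k\pi$, hence its rank $r$. The abelianization $\rho$ is then taken with respect to this basis. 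To obtain each $\mathbf{P_i}\in M_{r\times r_i}(\ZZ)$, I rewrite every $w_j$ as a word in $\{u_{i,1},\ldots,u_{i,r_i}\}$ --- a membership-with-expression computation carried out in the Stallings automaton of $H_i\pi$ --- and abelianize. Assembling $\mathbf{R}=(\mathbf{P_2A_2}-\mathbf{P_1A_1}\mid\cdots\mid\mathbf{P_kA_k}-\mathbf{P_{k-1}A_{k-1}})$ and the block matrix $\mathbf{L}$ of \Cref{lem: nonempy multiple intersection}, I compute $\im(\mathbf{L})$ and the lattice $(\im(\mathbf{L}))\mathbf{R}\preim\leqslant\ZZ^r$ together with its rank by $\ZZ$-linear algebra (Smith normal form). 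The answer is then \emph{affirmative} exactly when $r=0$, or $r=1$ with $(\im(\mathbf{L}))\mathbf{R}\preim=\{\mathbf{0}\}$, or $\rk\big((\im(\mathbf{L}))\mathbf{R}\preim\big)=r$.

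\textbf{Basis construction.} In the affirmative case I would output a basis of the form \eqref{eq: FTFA basis}. Set $M=(\im(\mathbf{L}))\mathbf{R}\preim\leqslant\ZZ^r$, so that $(H_1\cap\cdots\cap H_k)\pi=M\rho\preim$. When $r=0$, or $r=1$ and $M=\{\mathbf{0}\}$, this free part is trivial. Otherwise $M$ has finite index in $\ZZ^r$, whence $M\rho\preim$ is a finite-index subgroup of the free group $H_1\pi\cap\cdots\cap H_k\pi$; a free basis $\{w'_1,\ldots,w'_{r'}\}$ of it is computable via Reidemeister--Schreier applied to the finite quotient $\ZZ^r/M$. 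For each $w'_j$, which lies in $(H_1\cap\cdots\cap H_k)\pi$ and hence admits a common completion, I compute one vector $\mathbf{a_j}\in\ZZ^m$ in the nonempty intersection $\bigcap_i(w'_j\rho\,\mathbf{P_iA_i}+L_i)$ exactly as in \Cref{lem: nonempy multiple intersection}; and I compute the abelian part $L=L_1\cap\cdots\cap L_k=(H_1\cap\cdots\cap H_k)\cap\Zm$ by intersecting the lattices $L_i$. The returned basis is $\{\,w'_1\t^{\mathbf{a_1}},\ldots,w'_{r'}\t^{\mathbf{a_{r'}}};\ \text{a free-abelian basis of }L\,\}$.

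\textbf{Main obstacle.} The conceptual content is already supplied by \Cref{prop: mint fg iff}; the real work is to certify that every ingredient is \emph{effectively} computable and that the free-group and lattice layers interface correctly. The two delicate points are the computation of the matrices $\mathbf{P_i}$, which rests on effective rewriting of the intersection generators over each $H_i\pi$-basis inside the associated Stallings automaton, and the passage $M\rho\preim$ from a finite-index sublattice of $\ZZ^r$ back to a finite-index subgroup of $H_1\pi\cap\cdots\cap H_k\pi$ together with the extraction of a free basis. Both are standard, but they are precisely the steps where the geometry of free subgroups and the $\ZZ$-linear algebra must be combined coherently, and where the correctness of the final output must be argued.
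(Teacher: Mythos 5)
Your proposal is correct and follows essentially the same route as the paper: the decision part is \Cref{prop: mint fg iff} made effective, and the basis is extracted by observing that $(\im(\mathbf{L}))\mathbf{R}\preim\rho\preim$ is trivial or of finite index in $H_1\pi\cap\cdots\cap H_k\pi$, running a Schreier/Reidemeister--Schreier computation on the finite coset graph (the paper builds this graph explicitly, identifying vertices by membership tests in $\im(\mathbf{L})$), and then solving the compatible linear systems for the completion vectors and intersecting the lattices $L_i$. The only difference is one of presentation: you spell out the computation of the matrices $\mathbf{P_i}$ in more detail, while the paper spells out the Schreier graph construction in more detail, but the underlying algorithm is the same.
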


\begin{proof} 
The result is obvious for $k=0,1$, so we assume $k \geq 2$. In \Cref{prop: mint fg iff}, we have reduced the finite generability of the intersection $H_1\cap \cdots \cap H_k$ down to explicit linear algebra conditions, which are clearly verifiable using standard techniques. Hence, the decision problem is computable.

Finally, let us assume that $H_1\cap \cdots \cap H_k$ is finitely generated, and let us see how to compute a basis for it. It is clear that if $(H_1\cap \cdots \cap H_k)\pi =\{1\}$ then we can easily compute a (free-abelian) basis for $H_1 \cap \cdots \cap H_k =L_1\cap \cdots \cap L_k\leqslant \ZZ^m$. 

Otherwise, $1\leq r=\rk (H_1\pi \cap \cdots \cap H_k\pi)=\rk \big( (\im (\mathbf{L})) \mathbf{R\preim}\big)<\infty$ and so, $|\ZZ^r : (\im (\mathbf{L})) \mathbf{R\preim}|=\ell<\infty$. 
With a standard linear algebra procedure, we can compute a set $\{\mathbf{c_1},\ldots ,\mathbf{c_\ell}\}$ of coset representatives of $\big(\im (\mathbf{L})\big) \mathbf{R\preim}$ in $\ZZ^r$, namely
 \begin{equation*}
\ZZ^r =\big( \mathbf{c_1}+(\im (\mathbf{L})) \mathbf{R\preim} \big)\sqcup \cdots \sqcup \big(\mathbf{c_\ell}+(\im (\mathbf{L})) \mathbf{R\preim}\big).
 \end{equation*}
Then, we can use pull-backs of Stallings' automata (see~\cite{kapovich_stallings_2002,delgado_list_2022}) to compute a free basis $\{v_1, \ldots, v_r\}$ for $H_1\pi \cap \cdots \cap H_k\pi$, and choose respective arbitrary $\rho$-preimages in $H_1\pi \cap \cdots \cap H_k\pi$, say $z_1,\ldots ,z_\ell$, of the vectors $\mathbf{c_1},\ldots ,\mathbf{c_\ell}$, in order to obtain a set of right coset representatives of $(H_1 \cap \cdots \cap H_k)\pi = (\im (\mathbf{L})) \mathbf{R\preim}\rho\preim$ in $H_1\pi \cap \cdots \cap  H_k\pi$:
 \begin{equation}\label{equ}
H_1\pi \cap \cdots \cap  H_k\pi=\big( (H_1 \cap \cdots \cap H_k)\pi \big)z_1 \sqcup \cdots \sqcup \big( (H_1 \cap \cdots \cap H_k)\pi \big) z_\ell.
 \end{equation}
Finally, we build the (finite) Schreier digraph for $(H_1 \cap \cdots \cap H_k)\pi \leqfi H_1\pi \cap \cdots \cap  H_k\pi$ with respect to $\{v_1, \ldots, v_r \}$ (which coincides with the Cayley graph of the finite abelian group 
 \[
\big( H_1\pi \cap \cdots \cap  H_k\pi \big)/ (H_1 \cap \cdots \cap H_k)\pi \simeq \ZZ^r/ (\im (\mathbf{L})) \mathbf{R\preim}
 \]
\wrt the set of generators $\{v_1\rho, \ldots ,v_r\rho\}$), and compute the desired basis for $H_1\cap \cdots \cap H_k$ in the following way:
\begin{enumerate}
\item Take the cosets $\big( (H_1 \cap \cdots \cap H_k)\pi \big)z_i =\big( (\im (\mathbf{L})) \mathbf{R\preim}\rho\preim \big) z_i$, $i=1,\ldots ,\ell$, from~\eqref{equ} as vertices, and with no edges.
\item For every vertex $\big( (\im (\mathbf{L})) \mathbf{R\preim}\rho\preim\big) z_i$, $i=1,\ldots ,\ell$, and every letter $v_j$, $j=1,\ldots ,r$, add an edge labeled $v_j$ from $\big( (\im (\mathbf{L})) \mathbf{R\preim}\rho\preim\big) z_i$ to the vertex $\big( (\im (\mathbf{L})) \mathbf{R\preim}\rho\preim\big) z_iv_j$; we can algorithmically identify this vertex among the available ones by repeatedly solving the membership problem for $\im (\mathbf{L})$, which is just solving a system of linear equations, i.e., 
 \begin{align*}
\big( (\im (\mathbf{L})) \mathbf{R\preim}\rho\preim\big) z_iv_j =\big( (\im (\mathbf{L})) \mathbf{R\preim}\rho\preim\big) z_{i'}
& \ \Leftrightarrow\  z_iv_jz_{i'}^{-1}\in (\im (\mathbf{L})) \mathbf{R\preim}\rho\preim \\
& \ \Leftrightarrow\  (z_iv_jz_{i'}^{-1}) \rho \mathbf{R}\in \im (\mathbf{L})\,.
 \end{align*}
Once we have run over all $i=1,\ldots ,\ell$ and all $j=1,\ldots ,r$, we have fully computed the finite Schreier graph $\Gamma:=\Gamma\big( (H_1 \cap \cdots \cap H_k)\pi,\, H_1\pi \cap \cdots \cap  H_k\pi,\,\{v_1, \ldots ,v_r\}\big)$, of the finite index subgroup $(H_1 \cap \cdots \cap H_k)\pi\leqfi H_1\pi \cap \cdots \cap  H_k\pi$ with respect to the free basis $\{v_1, \ldots ,v_r\}$ of $H_1\pi \cap \cdots \cap  H_k\pi$.
\item Select a maximal tree $T$ in $\Gamma$ and, for every edge $e\in E\Gamma\setminus ET$, read the label $u_e$ (a word on the $v_i$'s) of the closed path $T[\bp, \iota e]eT[\tau e, \bp]$, where $\bp$ denotes the vertex corresponding to the trivial coset $(H_1 \cap \cdots \cap H_k)\pi$, and $T[p,p']$ denotes the $T$-geodesic from vertex $p$ to vertex $p'$. It is well-known that the elements obtained in this way, say $\{u_1, \ldots ,u_q\}$, form a free basis for $(\im (\mathbf{L})) \mathbf{R\preim}\rho\preim =(H_1 \cap \cdots \cap H_k)\pi$.
\item Finally, solving $q$ (compatible) systems of linear equations, we can effectively compute vectors $\mathbf{a_1}, \ldots ,\mathbf{a_q}\in \ZZ^m$ such that $u_1\t^\mathbf{a_1}, \ldots ,u_q\t^\mathbf{a_q} \in H_1 \cap \cdots \cap H_k$ (see \Cref{lem: completions are cosets}); and a free-abelian basis $\{ \t^\mathbf{b_1}, \ldots ,\t^\mathbf{b_s}\}$ for $L_{H_1\cap \cdots \cap H_k}=L_1\cap \cdots \cap L_k$.
\end{enumerate}
By construction, the set $\{u_1\t^\mathbf{a_1}, \ldots ,u_q \t^\mathbf{a_q};\,  \t^\mathbf{b_1},\ldots ,\t^\mathbf{b_s}\}$ is the required basis for ${H_1 \cap \cdots \cap H_k}$.
\end{proof}

To finish this section, we prove~\Cref{thm: technical}, a result which will be of central importance for the arguments in the coming sections, but we think it is also of independent interest. We emphasize that it is a fairly sensitive result as slight modifications of it turn out to be false; see \Cref{ex: rk1,ex: false in FTFA}. 

\begin{prop}\label{prop: = for free}
Let $M', M''\leqslant \Free[n]$ be two subgroups of $\Free[n]$ in free factor position, \ie such that $\gen{M',M''}=M'*M''$. Then, for any $H'_1,\ldots ,H'_k\leqslant M'\leqslant \Free[n]$ and $H''_1,\ldots ,H''_k\leqslant M''\leqslant \Free[n]$, 
 \begin{equation}\label{eq: = for free}
\bigcap\nolimits_{j=1}^k \langle H'_j, H''_j\rangle
=
\Big\langle\, \bigcap\nolimits_{j=1}^k H'_j,\,\, \bigcap\nolimits_{j=1}^k H''_j \,\Big\rangle .
 \end{equation}
\end{prop}

\begin{proof}
For $k=1$ there is nothing to prove. And, by a straightforward induction on $k$, it is enough to prove the result for $k=2$, \ie
 \begin{equation} \label{eq: k=2}
\langle H'_1, H''_1\rangle \cap \langle H'_2, H''_2\rangle =\gen{ H'_1\cap H'_2,\, H''_1\cap H''_2}.
 \end{equation}
The inclusion to the left is obvious. For the converse, take an element $w\in \gen{H'_1,H''_1} \cap \gen{H'_2,H''_2}=(H'_1*H''_1 )\cap (H'_2*H''_2)$ and consider its normal form in these two free products, $w=u'_1u''_1\cdots u'_pu''_p$ with $u'_i\in H'_1$ and $u''_i\in H''_1$, and $w=v'_1v''_1\cdots v'_qv''_q$ with $v'_i\in H'_2$ and $v''_i\in H''_2$. Since $H'_1,H'_2\leqslant M'$ and $H''_1,H''_2\leqslant M''$, both are also valid normal forms for $w$ as an element from $M'*M''$. Hence, they must coincide: $p=q$, $u'_i=v'_i$, and $u''_i=v''_i$. This means that $u'_i=v'_i\in H'_1\cap H'_2$ and $u''_i=v''_i\in H''_1\cap H''_2$ and so, $w\in \langle H'_1\cap H'_2 ,\, H''_1\cap H''_2 \rangle$, as required. 

As an alternative proof, fix free bases $\mathcal{B}'$ and $\mathcal{B}''$ for $M'$ and $M''$, and it is clear that, for $i=1,2$, the Stallings automaton $\Gamma_i$ for $H'_i*H''_i$ w.r.t. the free basis $\mathcal{B}'\sqcup \mathcal{B}''$ of $M'*M''$ is just $\Gamma'_i*\Gamma''_i$, the disjoint union of the Stallings automaton $\Gamma'_i$ for $H'_i$ w.r.t. $\mathcal B'$ and the Stallings automaton $\Gamma''_i$ for $H''_i$ w.r.t. $\mathcal B''$, after identifying their basepoints (see~\cite{stallings_topology_1983,delgado_list_2022}). Since the labels at the edges in $\Gamma'_1$ and $\Gamma'_2$ (which belong to $\mathcal{B}'$), and in $\Gamma''_1$ and $\Gamma''_2$ (which belong to $\mathcal{B}''$) are completely disjoint, the pull-back of $\Gamma_1=\Gamma'_1*\Gamma''_1$ and $\Gamma_2=\Gamma'_2*\Gamma''_2$ will be the disjoint union of the pull-backs of $\Gamma'_1$ and $\Gamma'_2$, and of $\Gamma''_1$ and $\Gamma''_2$, after identifying their basepoints. Hence, $(H'_1*H''_1 )\cap (H'_2*H''_2)=(H'_1\cap H'_2)*(H''_1\cap H''_2)=\gen{ H'_1\cap H'_2,\, H''_1\cap H''_2}$, as required.
\end{proof}

In order to transfer some behaviors of free products (in $\Fn$) and direct sums (in $\Zm$) to $\Fn \times \Zm$, we introduce the terminology and notation below.

\begin{defn}
We say that two subgroups $M',M''$ of $\FTA$ are \defin{strongly complementary} if their projections to the free part are in free factor position and their projections to the free-abelian part are in direct sum position. If so, we say that $\gen{M',M''}$ is the \defin{strongly complementary product} (\defin{s.c.-product}) of $M'$ and $M''$, and we write $\gen{M',M''} =M'\cast M''$; that is,
 \[
\gen{M',M''} = M'\cast M'' \ \Leftrightarrow \ \left\{ \!\!
 \begin{array}{ll}
\gen{M'\pi,M''\pi}= M'\pi * M''\pi\leqslant \Free[n], \text{ and} \\ \gen{M'\tau,M''\tau}= M'\tau \oplus M''\tau\leqslant \ZZ^m.   
 \end{array}
\right.
 \]
\end{defn}

\begin{rem}
Note that, in general, $L_{M'}=M'\cap \ZZ^m \leqslant M'\tau$ and $L_{M''}=M''\cap \ZZ^m \leqslant M''\tau$ and so the condition $\gen{M'\tau, M''\tau}= M'\tau \oplus M''\tau$ implies (but it is stronger than) $\gen{L_{M'}, L_{M''}}= L_{M'} \oplus L_{M''}$. 
\end{rem}

Note also that, if $M'$ and $M''$ are strongly complementary, a basis for $M'\cast M''$ can be obtained by just taking the (disjoint) union of a basis for $M'$ and a basis for $M''$; in particular, $M'\cast M''$ is finitely generated if and only if $M'$ and $M''$ are so. Accordingly, we define the \defin{external s.c.-product} of two FATF groups
to be 
 \[(\Free[n'] \times \ZZ^{m'}) \cast (\Free[n''] \times \ZZ^{m''})
 \,=\,
 (\Free[n'] * \Free[n'']) \times (\ZZ^{m'} \oplus \ZZ^{m''})
 \,\isom\,
 \Free[n'+n''] \times \ZZ^{m'+ m''}.
 \]
In particular, s.c.-products agglutinate both free products and direct sums in the corresponding factors; that is, if $M',M''\leqslant \Fn$ then $M'\cast M''=M'*M''$; and if $M',M''\leqslant \Zm$ then $M'\cast M''=M'\oplus M''$.  

Also, given arbitrary groups $A,B,C,D$, and homomorphisms $\alpha \colon A \to C$ and $\beta\colon B \to D$, we define the three homomorphisms $\alpha *\beta\colon A*B\to C*D$, $\alpha \oplus \beta \colon A\oplus B\to C\oplus D$, and $\alpha \cast \beta \colon A*B\to C\oplus D$ in the natural ways, each mapping $a\in A$ to $a\alpha$, and $b\in B$ to $b\beta$ into the corresponding codomain.

\begin{thm}\label{thm: technical}
Let $H'_1,\ldots ,H'_k\leqslant \Free[n']\times \ZZ^{m'} = G'$ and $H''_1,\ldots ,H''_k\leqslant \Free[n'']\times \ZZ^{m''}=G''$ be $k\geq 2$ subgroups of $G'$ and $G''$, respectively. Write 
$r'=\rk\big( \bigcap_{j=1}^k H'_j\pi \big)$, $r''=\rk\big( \bigcap_{j=1}^k H''_j\pi \big)$, and consider $\gen{H'_1, H''_1},\ldots ,\gen{H'_k, H''_k} \leqslant G' \cast G'' = (\Free[n']*\Free[n''])\times (\ZZ^{m'}\oplus \ZZ^{m''}) $. Then, if $\min(r',r'')\neq 1$: 
 \begin{equation}\label{eq: technical}
\bigcap\nolimits_{j=1}^k \gen{H'_j, H''_j} \text{ is f.g.}
\ \Leftrightarrow \ 
\text{both } \bigcap\nolimits_{j=1}^k H'_j
\text{ \ and \ }
\bigcap\nolimits_{j=1}^k H''_j \text{ are f.g.}
 \end{equation}
\end{thm}

\begin{proof} 
Contrary to what happens in the free ambient (\eg in  \Cref{prop: = for free}), the claimed property does not seem to pass through induction over $k$. Hence, we need to use the precise description obtained in~\Cref{prop: mint fg iff} (instead of just using~\Cref{prop: 2-int} inductively) in order to analyze the multiple intersections $\bigcap_{j=1}^k H'_j$ and $\bigcap_{j=1}^k H''_j$, and to compare them to $\bigcap_{j=1}^k \langle H'_j, H''_j\rangle$.

Consider general subgroups $H'_1,\ldots,H'_k \leqslant G'$ and $H''_1,\ldots,H''_k \leqslant G''$ (with no extra assumptions, at the moment, on $r'$ and $r''$) and fix a basis for each of them, say:
 \begin{align*}
H'_i
&\,=\,
\big\langle u'_{i,1} \t^\mathbf{a'_{i,1}}, u'_{i,2} \t^\mathbf{a'_{i,2}},\ldots ; \t^\mathbf{b'_{i,1}},\ldots ,\t^\mathbf{b'_{i,s'_i}}\big\rangle \,\leqslant\, \Free[n']\times \ZZ^{m'}=G', \\
H''_i & \,=\, \big\langle u''_{i,1} \t^\mathbf{a''_{i,1}}, u''_{i,2} \t^\mathbf{a''_{i,2}},\ldots ; \t^\mathbf{b''_{i,1}},\ldots ,\t^\mathbf{b''_{i,s''_i}}\big\rangle \,\leqslant\, \Free[n'']\times \ZZ^{m''}=G'', 
 \end{align*}
where, for every $i=1,\ldots ,k$ and every $j$, $u'_{i,j}\in \Free[n']$, $u''_{i,j}\in \Free[n'']$, $\mathbf{a'_{i,j}},\, \mathbf{b'_{i,j}}\in \ZZ^{m'}$ and $\mathbf{a''_{i,j}},\, \mathbf{b''_{i,j}}\in \ZZ^{m''}$. For $i=1,\ldots ,k$, we write $r'_i=\rk(H'_i\pi)$, $r''_i=\rk(H''_i\pi)$, $s'_i=\rk(L'_i)\leq m'$, and $s''_i=\rk(L''_i)\leq m''$, where $L'_i=H'_i\cap \ZZ^{m'}$ and $L''_i=H''_i\cap \ZZ^{m''}$. 

With the data from the basis for $H'_i$, $i=1,\ldots ,k$, we adapt the intersection diagram (\Cref{fig: int pr}) and notation adding primes everywhere; \ie 
the abelianization maps are $\rho'_i \colon H'_i\pi \onto \ZZ^{r'_i}$ and $\rho'\colon \bigcap_{j=1}^k H'_j\pi \onto \ZZ^{r'}$\!;
the inclusion maps are $\iota'_i \colon \bigcap_{j=1}^k H'_j\pi \into H'_i\pi$;
the abelianization of the $\iota'_i$'s are ${P'_i \colon \ZZ^{r'} \rightarrow \ZZ^{r'_i}}$;
the completion homomorphisms are $A'_i\colon \ZZ^{r'_i}\to \ZZ^{m'}$; 
$L'_i =H'_i\cap \ZZ^{m'}=\langle \mathbf{b'_{i,1}},\ldots , \mathbf{b'_{i,s'_i}} \rangle$; $\mathbf{L'_i}$ is the $s'_i \times m'$ integral matrix having $\mathbf{b'_{i,j}}$ as $j$-th row, $i=1,\ldots ,k$; $\mathbf{L'}$ is the \smash{$(\sum_{j=1}^k s'_j )\times (k-1)m'$} integral matrix from \Cref{lem: nonempy multiple intersection}; and the stack homomorphism is 
 \begin{equation}
\begin{array}{rcl}
R'\colon \ZZ^{r'} & \to & \ZZ^{m'}\oplus \stackrel{_{(k-1)}}{\ldots} \oplus \ZZ^{m'} \\
w & \mapsto & (wR'_2,\ldots ,wR'_k) \, ,
\end{array}
 \end{equation}
where $R'_i=P'_iA'_i-P'_{i-1}A'_{i-1}$, for $i=2,\ldots ,k$.

In this situation, \Cref{prop: mint fg iff} characterizes when the intersection $H'_1\cap \cdots \cap H'_k$ is finitely generated: 
 \begin{equation}\label{fg'}
H'_1\cap \cdots \cap H'_k \quad \text{is f.g.} \quad \Leftrightarrow \quad \left\{ \begin{array}{l} r'=0,1, \quad \text{or} \\ 2\leq r'<\infty\quad \text{and}\quad (\im (\mathbf{L'}))(R'){\preim} \leqfi \ZZ^{r'}.
\end{array} \right.
 \end{equation}

In the exact same manner, we construct the intersection diagram for $H''_1\cap \cdots \cap H''_k$; the notation being exactly the same as in the previous paragraph, replacing primes by double primes everywhere. Repeating the previous argument, we have:
 \begin{equation}\label{fg''}
H''_1\cap \cdots \cap H''_k \quad \text{is f.g.} \quad \Leftrightarrow \quad \left\{ \begin{array}{l} r''=0,1, \quad \text{or} \\ 2\leq r''<\infty\quad \text{and}\quad (\im (\mathbf{L''}))(R''){\preim} \leqfi \ZZ^{r''}.
\end{array} \right.
 \end{equation}

Now, consider the combined subgroups $H_1=\langle H'_1, H''_1\rangle, \ldots ,H_k=\langle H'_k, H''_k\rangle$, all of them viewed as subgroups of $G =G'\cast G''=(\Free[n']*\Free[n''])\times (\ZZ^{m'}\oplus \ZZ^{m''})$. The key point is to observe that the intersection diagram for $H_1\cap \cdots \cap H_k$ is just the $\cast$-juxtaposition of the previous two (primed and double primed) diagrams. In fact, for $i=1,\ldots ,k$, since $H'_i$ and $H''_i$ are contained in the strongly complementary subgroups $\Free[n']\times \ZZ^{m'}$ and $\Free[n'']\times \ZZ^{m''}$ of $(\Free[n']*\Free[n''])\times (\ZZ^{m'}\oplus \ZZ^{m''})$ and hence they are strongly complementary as well, we can obtain a basis for $H_i=H'_i\cast H''_i$ by taking the disjoint union of the bases we already have for $H'_i$ and for $H''_i$, namely
 \begin{align*}
H_i=H'_i \cast  H''_i =
\big\langle
\{ u'_{i,1} \t^\mathbf{a'_{i,1}}, u'_{i,2} \t^\mathbf{a'_{i,2}},\ldots \}\cup\{ u''_{i,1} \t^\mathbf{a''_{i,1}}, u''_{i,2} \t^\mathbf{a''_{i,2}},\ldots \}; \t^\mathbf{b'_{i,1}},\ldots ,\t^\mathbf{b'_{i,s'_i}} , \t^\mathbf{b''_{i,1}},\ldots ,\t^\mathbf{b''_{i,s''_i}} \big\rangle, 
 \end{align*}
Now, by~\Cref{prop: = for free},
 \begin{align*}
H_1\pi \cap \cdots \cap H_k\pi
&\,=\,
\langle H'_1 \pi, H''_1\pi \rangle \cap \cdots \cap \langle H'_k \pi, H''_k \pi\rangle\\
&\,=\,
(H'_1 \pi \cap \cdots \cap H'_k\pi)*(H''_1 \pi \cap \cdots \cap  H''_k\pi).
 \end{align*}
Hence, the intersection diagram for  $H_1\pi \cap \cdots \cap H_k\pi$ consists of (for $i=1,\ldots ,k$):
\begin{enumerate}[ind]
\item the abelianization map
$\rho_i\colon H_i\pi
=
H'_i\pi *H''_i\pi \onto \ZZ^{r'_i}\oplus \ZZ^{r''_i}=\ZZ^{r_i}$ is $\rho_i=\rho'_i\cast \rho''_i$, where $r_i=r'_i+r''_i$; and
$\rho\colon \bigcap_{j=1}^{k} H_j\pi =(\bigcap_{j=1}^{k} H'_j \pi)*(\bigcap_{j=1}^{k} H''_j \pi) \onto \ZZ^{r'}\oplus \ZZ^{r''}=\ZZ^{r}$ is $\rho=\rho'\cast \rho''$, where $r=r'+r''$;

\item the natural inclusion map $\iota_i \colon  \bigcap_{j=1}^{k} H_j \pi = (\bigcap_{j=1}^{k} H'_j \pi)*(\bigcap_{j=1}^{k} H''_j \pi) \into H_i =H'_i\pi * H''_i\pi$ is $\iota_i =\iota'_i *\iota''_i$; 

\item the abelianization of $\iota_i$, $P_i\colon \ZZ^r =\ZZ^{r'}\oplus \ZZ^{r''} \to \ZZ^{r'_i}\oplus \ZZ^{r''_i}=\ZZ^{r_i}$, is $P_i=P'_i \oplus P''_i$; 

\item the completion homomorphism $A_i\colon \ZZ^{r_i}=\ZZ^{r'_i}\oplus \ZZ^{r''_i} \to \ZZ^{m'}\oplus \ZZ^{m''}=\ZZ^{m}$ is $A_i=A'_i \oplus A''_i$;

\item $L_i =H_i\cap (\ZZ^{m'}\oplus \ZZ^{m''})=\langle \mathbf{b'_{i,1}},\ldots ,\mathbf{b'_{i,s'_i}},\, \mathbf{b''_{i,1}},\ldots ,\mathbf{b''_{i,s''_i}} \rangle=L'_i\oplus L''_i$, where $s_i=\rk(L_i)= s'_i+s''_i\leq m'+m''=m$ and, accordingly, $\mathbf{L_i}$ is the $s_i\times m$ integral matrix $\mathbf{L_i} =\left(\!\begin{smallmatrix}
\mathbf{L'_i} & \\[-1pt] & \mathbf{L''_i} \end{smallmatrix}\!\right)$.
\end{enumerate}
\Cref{fig: int pr 2} summarizes the combined situation so far.
\begin{figure}[H]
\centering
\begin{tikzcd}[row sep=25pt, column sep=25pt,ampersand replacement=\&]
\&[-87pt]\big(\bigcap_{j=1}^{k} H_j\big)\pi \&[5pt] \&[-30pt]     \\[-27pt]
\&\rotatebox[origin=c]{270}{$\normaleq$}\\[-27]
\big(\bigcap_{j=1}^{k} H'_j \pi\big)*\big(\bigcap_{j=1}^{k} H''_j \pi\big)\,= \hspace{54pt}\& \bigcap_{j=1}^{k} H_j \pi \arrow[r, hook,"\iota_i=\iota'_i *\iota''_i"] \arrow[d, ->>,"\rho' \cast \rho'' = \rho"'] \& H_i\pi \arrow[d, ->>,"\rho_i =\rho'_i \cast \rho''_i"] \&[-30pt] =H'_i\pi *H''_i\pi \\
\ZZ^{r'}\oplus \ZZ^{r''}= \hspace{-52pt}\& \ZZ^r 
\arrow[ur, phantom, "///"]
\arrow[r,"P_i=P'_i\oplus P''_i"]  \& \ZZ^{r_i} \arrow[ld,"A_i=A'_i \oplus A''_i"]  \& \hspace{-5pt} =\ZZ^{r'_i}\oplus \ZZ^{r''_i} \\
  \ZZ^{m'} \oplus \ZZ^{m''} = \hspace{-45pt}  \& \ZZ^{m} 
\end{tikzcd}
\caption{The intersection diagram for $\bigcap_{j=1}^{k} \left( H'_j \cast H''_j \right)$}
\label{fig: int pr 2}
\end{figure}

Then, for $i=2,\ldots ,k$, the homomorphisms $R_i \colon \ZZ^{r'} \oplus \ZZ^{r''} \to \ZZ^{m'}\oplus \ZZ^{m''}$ are given by
 \begin{align*}
R_i & \,=\, P_iA_i-P_{i-1}A_{i-1} \\ & \,=\, (P'_i \oplus P''_i)(A'_i\oplus A''_i)-(P'_{i-1}\oplus P''_{i-1})(A'_{i-1}\oplus A''_{i-1}) \\ & \,=\, (P'_iA'_i \oplus P''_iA''_i)-(P'_{i-1}A'_{i-1}\oplus P''_{i-1}A''_{i-1}) \\ & \,=\, (P'_iA'_i-P'_{i-1}A'_{i-1})\oplus (P''_iA''_i-P''_{i-1}A''_{i-1}) \\ & \,=\, R'_i\oplus R''_i
 \end{align*}
and, using them, we build the combined stack homomorphism
 \begin{equation*}
\begin{array}{rcl}
R=(R_2,\ldots ,R_k) \colon \ZZ^r=\ZZ^{r'}\oplus \ZZ^{r''} & \to & (\ZZ^{m'} \oplus \ZZ^{m''})\, \oplus \stackrel{_{(k-1)}}{\ldots} \oplus \, (\ZZ^{m'} \oplus \ZZ^{m''}). \\
\mathbf{w} = (\mathbf{w'}, \mathbf{w''}) & \mapsto & (
\underbrace{\mathbf{w'}R'_2, \mathbf{w''}R''_2}_{\mathbf{w} R_2}, \ldots ,\underbrace{\mathbf{w'}R'_k, \mathbf{w''}R''_k}_{\mathbf{w}R_k})
\end{array}
 \end{equation*} 
Finally, the combined matrix $\mathbf{L}$ takes the form
  \begin{equation} \label{eq: combined L}
\mathbf{L}=\left( \!\!\begin{array}{rrrcc} \mathbf{L_1} & & & & \\ -\mathbf{L_2} & \mathbf{L_2} & & & \\ & -\mathbf{L_3} & \mathbf{L_3} & & \\ & & \hspace{-5pt}\ddots & \hspace{-1pt} \ddots & \\ & & & -\mathbf{L_{k-1}} & \mathbf{L_{k-1}} \\ & & & & -\mathbf{L_{k}} \end{array}\right)\in M_{(\ssum_j s_j )\times (k-1)m} (\ZZ)\, .
 \end{equation}
Now, applying \Cref{prop: mint fg iff} to the combined situation, we have that $(H_1\cap \cdots \cap H_k)\pi =(\im (\mathbf{L}))R\preim \rho\preim$ is a normal subgroup of $H_1\pi \cap \cdots \cap H_k\pi$ and, 
 \begin{equation}\label{fg}
H_1\cap \cdots \cap H_k \quad \text{is f.g.} \quad \Leftrightarrow \quad \left\{ \begin{array}{l} r=0,1, \quad \text{or} \\ 2\leq r<\infty\quad \text{and}\quad (\im (\mathbf{L}))R{\preim} \leqfi \ZZ^{r}.
\end{array} \right.
 \end{equation}
Furthermore, at this point we claim that
\begin{equation} \label{eq: subgroup =}
\big( \im (\mathbf{L}) \big)R\preim =(\im (\mathbf{L}'))(R')\preim \oplus (\im (\mathbf{L}''))(R'')\preim \leqslant \ZZ^{r'}\oplus \ZZ^{r''} \,=\, \ZZ^r.
\end{equation}
To see this, observe that $(\mathbf{w'}, \mathbf{w''})\in \big( \im (\mathbf{L}) \big)R\preim$ if and only if 
 \begin{equation} \label{eq: intercalat}
(\mathbf{w'}, \mathbf{w''})R
\,=\,
(\mathbf{w'}R'_2, \mathbf{w''}R''_2, \ldots ,\mathbf{w'}R'_k, \mathbf{w''}R''_k)
\,\in\,
\im (\mathbf{L}),
 \end{equation}
that is, if and only if there exist integral vectors $\mathbf{c'_1}, \mathbf{c''_1}, \ldots ,\mathbf{c'_k}, \mathbf{c''_k}$ of the corresponding sizes such that 
 \begin{equation}
(\mathbf{w'}R'_2, \mathbf{w''}R''_2, \ldots ,\mathbf{w'}R'_k, \mathbf{w''}R''_k)
\,=\,
(\mathbf{c'_1}, \mathbf{c''_1}, \ldots ,\mathbf{c'_k}, \mathbf{c''_k})\mathbf{L}.
 \end{equation}
But, due to the form \eqref{eq: combined L} of the matrix $\mathbf{L}$, this is equivalent to 
 \[
\bigg\{ \!
 \begin{array}{l}
(\mathbf{w'}R'_2, \ldots ,\mathbf{w'}R'_k)
\,=\,
(\mathbf{c'_1}, \ldots ,\mathbf{c'_k})\mathbf{L'} \text{,\quad and} \\[2pt]
(\mathbf{w''}R''_2, \ldots ,\mathbf{w''}R''_k)
\,=\,
(\mathbf{c''_1}, \ldots ,\mathbf{c''_k})\mathbf{L''},
 \end{array}
 \]
which is the same as saying that $\mathbf{w'}R'\in \im (\mathbf{L'})$ and $\mathbf{w''}R''\in \im (\mathbf{L''})$ (independently). Therefore, $(\mathbf{w'}, \mathbf{w''})\in \big( \im (\mathbf{L}) \big)R\preim$ if and only if $\mathbf{w'}\in \big( \im (\mathbf{L}') \big)(R')\preim$ and $\mathbf{w''}\in \big( \im (\mathbf{L}'') \big)(R'')\preim$, as claimed. 

Finally, let us make use of the hypothesis $\min(r',r'')\neq 1$; by symmetry, we can assume $1\neq r'\leq r''$. Taking into account that $r=r'+r''$, we prove~\eqref{eq: technical} by relating the equivalences~\eqref{fg'} and~\eqref{fg''} with~\eqref{fg}, and by distinguishing the different possibilities for $r'$ and $r''$: 
\begin{enumerate}
\item[(a)] if $r'= 0$ 
    (\ie if $\bigcap_{j=1}^k H'_j\pi= \Trivial$)
    then $r = r''$, $\bigcap_{j=1}^k H_j \pi= \bigcap_{j=1}^k H''_j \pi$, $\ZZ^r =\ZZ^{r''}$, $P_i=P''_i$, and $R_i=R''_i$; moreover,
    \Cref{eq: intercalat} takes the form
    $\mathbf{w''} R=(\mathbf{w''}R''_2, \ldots, \mathbf{w''}R''_k)\,\in\, \im (\mathbf{L''})$, and hence, $R=R''$. Therefore,
    $\big( \bigcap_{j=1}^{k} H_j \big) \pi =
    \im(\mathbf{L}) R\preim \rho\preim = \im(\mathbf{L''}) (R'')\preim (\rho'')\preim =
    \big( \bigcap_{j=1}^{k} H''_j \big) \pi$ and equivalence~\eqref{eq: technical} holds trivially. 
\item[(b)] if $r'\geq 2$ then $r\geq 2$ and, by~\eqref{eq: subgroup =}, $(\im (\mathbf{L}))R{\preim} \leqfi \ZZ^{r}$ if and only if $(\im (\mathbf{L'}))(R'){\preim} \leqfi \ZZ^{r'}$ and $(\im (\mathbf{L''}))(R''){\preim} \leqfi \ZZ^{r''}$. So, equivalence~\eqref{eq: technical} holds.
 
\end{enumerate}
This completes the proof. 
\end{proof}

\begin{rem} \label{rem: k=1}
In order to understand the intersection of $k\geq 2$ subgroups we have used \Cref{prop: mint fg iff}, with the technical hypothesis $\min(r',r'')\neq 1$ to avoid the exceptional behaviour shown in the example below, where the equivalence~\eqref{eq: technical} fails in the cases $r'=\rk\big( \bigcap_{j=1}^k H'_j\pi \big)=1$ and $r''=\rk\big( \bigcap_{j=1}^k H''_j\pi \big)\geq 1$. However, note that in the degenerated case $k=1$ there is no intersection to consider and the equivalence \eqref{eq: technical} follows immediately from $G'$ and $G''$ being strongly complementary in $G'\cast G''$, without any assumption on $r'$ or $r''$.
\end{rem}

\begin{exm}\label{ex: rk1}
Let $n\geq 1$, $G'=\Free[1]\times \ZZ^1=\langle x\mid - \rangle \times \langle t\mid - \rangle$ and $G''=\Free[n]\times \ZZ^0=\langle y_1,\ldots,y_{n} \mid -\rangle$, and take the subgroups $H'_1=\langle x\rangle$  and $H'_2=\langle x\t\rangle$ of $ G'$; and $H''_1 =H''_2 =\langle y_1,\ldots,y_{n}\rangle$ of~$G''$. Note that $r'=\rk(H'_1\pi \cap H'_2\pi)=1$ and $r''=\rk(H''_1\pi \cap H''_2\pi)=n\geq 1$. Consider then $H_1=\gen{H'_1, H''_1}=\gen{x,y_1,\ldots,y_{n}}$ and $H_2=\gen{H'_2, H''_2}=\gen{x t, y_1,\ldots,y_n}$, both as subgroups of $G'\cast G''=\langle x,y_1,\ldots,y_{n} \mid -\rangle\times \langle \t \mid - \rangle =\Free[n+1]\times \ZZ$.

Clearly, $H'_1 \cap H'_2=\{1\}$ and $H''_1\cap H''_2 =\gen{y_1,\ldots,y_{n}}$ are both finitely generated, whereas ${H_1 \cap H_2} =\langle x,y_1,\ldots,y_{n}\rangle \cap \langle x\t, y_1,\ldots,y_{n}\rangle =\{w(x,y_1,\ldots,y_{n}) \mid |w|_x =0\}=\normalcl{y_1,\ldots,y_{n}}_{\gen{x,y_1,\ldots,y_{n}}}$ is not. Hence, equivalence~\eqref{eq: technical} can fail whenever $\min(r',r'') = 1$ and \Cref{thm: technical} is as general as possible.

Moreover, observe that adding an extra freely independent letter, say $z$, to $G'$, $H'_1$ and $H'_2$ (and so, forcing $r'=2$ instead of $r'=1$) spoils the counterexample because then $H'_1 \cap H'_2$ becomes $\normalcl{z}_{\gen{x,z}}$, which is not finitely generated any more.
\end{exm}

It is worth mentioning that the proof of \Cref{thm: technical} is a bit delicate, in consonance with the fact that the statement is quite sensible to slight modifications. For example, an equality like \eqref{eq: = for free} (from which \Cref{thm: technical} would follow immediately) \emph{is not true} in the free-times-free-abelian case, \emph{even in the strongly complementary situation}; see the example below. This forced us to prove \Cref{thm: technical} directly, adding the complication that the statement does not pass well through induction on $k$ and so, forcing us to work with the multiple intersection situation (\Cref{prop: mint fg iff}) instead of the easier $k=2$ case (\Cref{prop: 2-int}).

\begin{exm} \label{ex: false in FTFA}
Consider in $\Free[4]\times \ZZ^2=\langle x_1,x_2,x_3,x_4 \mid -\rangle \times \langle t_1, t_2 \mid [t_1, t_2]\rangle$, the strongly complementary subgroups ${M'=\gen{x_1,x_2,\t^\mathbf{(1,0)}}}$ and $M''=\gen{x_3,x_4,\t^\mathbf{(0,1)}}$, and the respective subgroups
 \begin{align*}
H'_1 & =\gen{x_1,x_2}, \  H'_2=\gen{x_1\t^\mathbf{(1,0)}, x_2} \,\leqslant\, M', \\ H''_1 & =\gen{x_3,x_4}, \ H''_2=\gen{x_3\t^\mathbf{(0,1)}, x_4} \,\leqslant\, M'',
 \end{align*}
clearly satisfying $r'=\rk(H'_1\pi \cap H'_2\pi)=2$ and $r''=\rk(H''_1\pi \cap H''_2\pi)=2$. It is well known that $H'_1\cap H'_2=\langle x_1^{-i} x_2x_1^{i} ,\, i\in \ZZ \rangle$ and $H''_1\cap H''_2=\langle x_3^{-i} x_4 x_3^{i} ,\, i\in \ZZ \rangle$ are not finitely generated; see~\Cref{prop: Fn x Z^n no Howson}. Hence,
\[
\gen{H'_1 \cap H'_2,\, H''_1\cap H''_2}=(H'_1 \cap H'_2)*(H''_1\cap H''_2)=\langle x_1^{-i} x_2 x_1^{i},\, x_3^{-i} x_4 x_3^{i} \st i\in \ZZ \rangle
\]
and a quick look at its Stallings automaton tells us that this subgroup \emph{does not} contain the element $x_3^{-1} x_2 x_3$ while, clearly, $x_3^{-1} x_2x_3\in \gen{H'_1,\, H''_1}=\gen{x_1, x_2, x_3, x_4}$ and $x_3^{-1} x_2 x_3\in \gen{H'_2,\, H''_2}=\gen{x_1\t^\mathbf{(1,0)}, x_2, x_3\t^\mathbf{(0,1)}, x_4}$. Thus, \Cref{prop: = for free} is not true in the FTFA context, even in the strongly complementary situation. Note, however, that this example satisfies the less demanding statement from \Cref{thm: technical}, namely the subgroups $H'_1\cap H'_2$, $H''_1\cap H''_2$, and 
 \begin{align*}
\gen{H'_1,\, H''_1}\cap \gen{H'_2,\, H''_2} 
&\, =\, \gen{x_1, x_2, x_3, x_4} \cap \gen{x_1\t^\mathbf{(1,0)}, x_2, x_3\t^\mathbf{(0,1)}, x_4} \\
&\, =\, \{w(x_1, x_2, x_3, x_4) \mid |w|_{x_1}=|w|_{x_3}=0\}
 \end{align*}
are all of the same character: not finitely generated.
\end{exm}

\section{Intersection configurations}\label{sec: conf}

In this section we introduce some basic terminology, in order to conveniently state our next results. The first notion is that of $k$-configuration.

\begin{defn} \label{def: conf}
Let $k\in \NN_{\geq 1}$.
A \emph{$k$-configuration} is a map $\conf$ from ${\mathcal P}([k])\setminus \{\varnothing\}$ to the binary set~$\{\fg,\, \nfg\}$, i.e.,
 \begin{equation*}
\conf\colon {\mathcal P}([k])\setminus \{\varnothing\} \to \{\fg, \nfg\}.
 \end{equation*}
Note that this is just a fancy way to specify a family of nonempty subsets of $[k]$.
That is,
$\conf$ is just the indicator function 
 \smash{$\schi_{\mathcal{I}}^{_{(k)}}$}
of the inclusion $\mathcal{I} = (1)\conf\preim \subseteq \mathcal{P}([k])\setminus \{\varnothing\}$;
then we say that $\mathcal{I}$ is the \emph{support},
 and $k$ is the \defin{dimension}
of~$\conf$. 
In particular, the $k$-configuration $\zero^{_{(k)}}_{\phantom{a}}=\schi_{\varnothing}^{_{(k)}}$ (sending every nonempty set of indices to $\fg$) is called the \emph{zero $k$-configuration}. The \emph{one $k$-configuration} $\one^{_{(k)}}_{\phantom{a}}=\schi_{\parts}^{_{(k)}}$ is defined accordingly. When $\mathcal{I}=\{I\}$ (i.e., only one nonempty subset $I\subseteq [k]$ goes to~$1$), we write $\schi_{\{\hspace{-1pt}I\hspace{-1pt}\}}^{_{(k)}}=\schi_{I}^{_{(k)}}$ and we say that it is an \emph{almost-$\zero$} $k$-configuration. 
If the ambient is clear (or does not affect the ongoing argument) we usually omit any reference to it and simply write $\schi_{\mathcal{I}}^{_{(k)}}= \conf[\mathcal{I}]$, $\zero^{_{(k)}}_{\phantom{a}}=\zero$, $\one^{_{(k)}}_{\phantom{a}}=\one$, etc.
\end{defn}

A convenient way to visualize $k$-configurations is as 2-colored, oriented, \mbox{$k$-dimensional} hypercube digraphs, where white vertices correspond to nonempty subsets of $[k]$ going to $0$, black vertices correspond to nonempty subsets of $[k]$ going to $1$, and arcs (directed edges) denote inclusion into a subset with exactly one more element. In our representation, the top vertex corresponds to the empty subset (which is excluded in \Cref{def: conf} and will be ignored in the graphical representation), and the bottom vertex corresponds to the total subset $I=[k]$.

Note that, with this interpretation, the family of subsets containing (resp., not containing) a given index $i\in [1,k]$ is a maximal hyperface of the hypercube.

\begin{exm}
The lattice of subsets of $[k]$ with $k=3$ can be represented as
\begin{figure}[H]
    \centering
\begin{tikzpicture}[>=latex,
state/.style={circle,draw,minimum size=2mm}
]
\begin{scope}
    \newcommand{\dx}{0.75}
    \newcommand{\dy}{0.6}
    \node (000)  {$\varnothing$};
    
    \node (010) [below = \dy of 000] {$\{2\}$};
    \node (100) [left = \dx of 010] {$\{1\}$};
    \node (001) [right = \dx of 010] {$\{3\}$};
    
    \node (110) [below = \dy of 100] {$\{1,2\}$};
    \node (101) [below = \dy of 010] {$\{1,3\}$};
    \node (011) [below = \dy of 001] {$\{2,3\}$};
    
    \node (111) [below = \dy of 101] {$\{1,2,3\}$};
    
    \path[->] (000) edge (100);
    \path[->] (000) edge (010);
    \path[->] (000) edge (001);
    
    \path[->] (100) edge (110);
    \path[->] (100) edge (101);
    \path[->] (010) edge (110);
    \path[->] (010) edge (011);
    \path[->] (001) edge (101);
    \path[->] (001) edge (011);
    
    \path[->] (110) edge (111);
    \path[->] (101) edge (111);
    \path[->] (011) edge (111);
\end{scope}   
\end{tikzpicture}
\caption{The lattice of subsets of $[3] = \{1,2,3\}$}
\end{figure}
Now, coloring in white (resp., black) the vertices mapping to $0$ (resp., $1$) by $\conf$ provides the desired representation for the $3$-configurations (see \Cref{fig: confs}).
\begin{figure}[H]
    \centering
\begin{tikzpicture}[>=latex,
state/.style={circle,draw,minimum size=2mm}
]
\begin{scope}
    \newcommand{\dx}{0.6}
    \newcommand{\dy}{0.5}
    \node[smallstate] (000)  {};
    
    \node[state,fill=black] (010) [below = \dy of 000] {};
    \node[state] (100) [left = \dx of 010] {};
    \node[state,fill=black] (001) [right = \dx of 010] {};
    
    \node[state,fill=black] (110) [below = \dy of 100] {};
    \node[state] (101) [below = \dy of 010] {};
    \node[state] (011) [below = \dy of 001] {};
    
    \node[state,fill=black] (111) [below = \dy of 101] {};
    
    \node (l) [below = \dy/2 of 111] {$\conf[\{\{2\},\{3\},\{1,2\},\{1,2,3\}\}]$};
    
    \path[->] (000) edge (100);
    \path[->] (000) edge (010);
    \path[->] (000) edge (001);
    
    \path[->] (100) edge (110);
    \path[->] (100) edge (101);
    \path[->] (010) edge (110);
    \path[->] (010) edge (011);
    \path[->] (001) edge (101);
    \path[->] (001) edge (011);
    
    \path[->] (110) edge (111);
    \path[->] (101) edge (111);
    \path[->] (011) edge (111);
\end{scope}

\begin{scope}[xshift=111pt]
    \newcommand{\dx}{0.6}
    \newcommand{\dy}{0.5}
    \node[smallstate] (000)  {};
    
    \node[state] (010) [below = \dy of 000] {};
    \node[state] (100) [left = \dx of 010] {};
    \node[state] (001) [right = \dx of 010] {};
    
    \node[state] (110) [below = \dy of 100] {};
    \node[state] (101) [below = \dy of 010] {};
    \node[state] (011) [below = \dy of 001] {};
    
    \node[state] (111) [below = \dy of 101] {};
    
    \node (l) [below = \dy/2 of 111] {$\conf[\varnothing]$};
    
    \path[->] (000) edge (100);
    \path[->] (000) edge (010);
    \path[->] (000) edge (001);
    
    \path[->] (100) edge (110);
    \path[->] (100) edge (101);
    \path[->] (010) edge (110);
    \path[->] (010) edge (011);
    \path[->] (001) edge (101);
    \path[->] (001) edge (011);
    
    \path[->] (110) edge (111);
    \path[->] (101) edge (111);
    \path[->] (011) edge (111);
\end{scope}

\begin{scope}[xshift=222pt]
    \newcommand{\dx}{0.6}
    \newcommand{\dy}{0.5}
    \node[smallstate] (000)  {};
    
    \node[state] (010) [below = \dy of 000] {};
    \node[state,fill=black] (100) [left = \dx of 010] {};
    \node[state] (001) [right = \dx of 010] {};
    
    \node[state] (110) [below = \dy of 100] {};
    \node[state] (101) [below = \dy of 010] {};
    \node[state] (011) [below = \dy of 001] {};
    
    \node[state] (111) [below = \dy of 101] {};
    
    \node (l) [below = \dy/2 of 111] {$\conf[\set{1}]$};
    
    \path[->] (000) edge (100);
    \path[->] (000) edge (010);
    \path[->] (000) edge (001);
    
    \path[->] (100) edge (110);
    \path[->] (100) edge (101);
    \path[->] (010) edge (110);
    \path[->] (010) edge (011);
    \path[->] (001) edge (101);
    \path[->] (001) edge (011);
    
    \path[->] (110) edge (111);
    \path[->] (101) edge (111);
    \path[->] (011) edge (111);
\end{scope}

\begin{scope}[xshift=333pt]
    \newcommand{\dx}{0.6}
    \newcommand{\dy}{0.5}
    \node[smallstate] (000)  {};
    
    \node[state] (010) [below = \dy of 000] {};
    \node[state] (100) [left = \dx of 010] {};
    \node[state] (001) [right = \dx of 010] {};
    
    \node[state] (110) [below = \dy of 100] {};
    \node[state] (101) [below = \dy of 010] {};
    \node[state] (011) [below = \dy of 001] {};
    
    \node[state,fill=black] (111) [below = \dy of 101] {};
    
    \node (l) [below = \dy/2 of 111] {$\conf[\set{1,2,3}]$};
    
    \path[->] (000) edge (100);
    \path[->] (000) edge (010);
    \path[->] (000) edge (001);
    
    \path[->] (100) edge (110);
    \path[->] (100) edge (101);
    \path[->] (010) edge (110);
    \path[->] (010) edge (011);
    \path[->] (001) edge (101);
    \path[->] (001) edge (011);
    
    \path[->] (110) edge (111);
    \path[->] (101) edge (111);
    \path[->] (011) edge (111);
\end{scope}
\end{tikzpicture}
    \caption{From left to right, a random $3$-configuration, the $\zero$ $3$-configuration, and two almost-$\zero$ $3$-configurations}
    \label{fig: confs}
\end{figure}
\end{exm}

Now, we introduce a couple of ways to build new configurations from older ones.

\begin{defn}\label{sum of conf}
Given two $k$-configurations $\conf = \conf[\mathcal{I}],\, \conf'=\conf[\mathcal{I}']$, we define their \emph{join} as the new $k$-configuration 
$\conf[\mathcal{I}] \join \conf[\mathcal{I}'] = \conf[\mathcal{I} \cup \mathcal{I}']$. That is,
 \[
\begin{array}{rcl}
\conf\join \conf' \colon \mathcal{P}([k])\setminus \{\varnothing\} & \to & \{\fg,\, \nfg\} \\[2pt]
I & \mapsto & \left\{ \begin{array}{lcl} \fg & & \text{if } (I)\conf=(I)\conf'=\fg, \\ \nfg & & \text{otherwise.} \end{array}\right. \end{array}
 \]
\end{defn}

\begin{figure}[H]
\centering
\begin{tikzpicture}[>=latex,
state/.style={circle,draw,minimum size=3mm,inner sep=2pt}
]
\newcommand{\dx}{0.3}
\newcommand{\dy}{0.5}

\begin{scope}
    \node[smallstate] (00)  {};   
    \node (1) [below = \dy of 00] {};
    \node[state,fill=black] (10) [left = \dx of 1] {};
    \node[state] (01) [right = \dx of 1] {};    
    \node[state,fill=black] (11) [below = \dy of 1] {};
    \node [right = \dx of 01] {$\join$};
        
    \path[->] (00) edge (10);
    \path[->] (00) edge (01);
    \path[->] (01) edge (11);
    \path[->] (10) edge (11);
\end{scope}

\begin{scope}[xshift = 73pt]
    \node[smallstate] (00)  {};   
    \node (1) [below = \dy of 00] {};
    \node[state] (10) [left = \dx of 1] {};
    \node[state] (01) [right = \dx of 1] {};    
    \node[state,fill=black] (11) [below = \dy of 1] {};
    \node [right = \dx of 01] {$=$};
        
    \path[->] (00) edge (10);
    \path[->] (00) edge (01);
    \path[->] (01) edge (11);
    \path[->] (10) edge (11);
\end{scope}

\begin{scope}[xshift = 150pt]
    \node[smallstate] (00)  {};   
    \node (1) [below = \dy of 00] {};
    \node[state,fill=black] (10) [left = \dx of 1] {};
    \node[state] (01) [right = \dx of 1] {}; 
    \node[state,fill=black] (11) [below = \dy of 1] {};
        
    \path[->] (00) edge (10);
    \path[->] (00) edge (01);
    \path[->] (01) edge (11);
    \path[->] (10) edge (11);
\end{scope}

\end{tikzpicture}
\caption{A schematic representation for $\conf[\set{\set{1},\set{1,2}}] \join \conf[\set{1,2}] =\conf[\set{\set{1},\set{1,2}}]$}
\end{figure}

\begin{defn}
Given two $k$-configurations $\conf,\, \conf'$ and $\delta \in \{0,1\}$, we define the \emph{$\delta$-overlap} of $\conf$ and~$\conf'$ as the new $(k+1)$-configuration given by  
 \[
\begin{array}{rcl}
\conf\qplus_{\delta} \conf' \colon \mathcal{P}([k+1])\setminus \{\varnothing\} & \to & \{\fg,\, \nfg\} \\[2pt]
I\,\, & \mapsto & \Bigg\{ \begin{array}{lcl} (I)\conf & & \text{if } k+1\not\in I, \\ (I\setminus \{k+1\})\conf' & & \text{if } \{k+1\}\subsetneq I,\\
\delta & & \text{if } \{k+1\}=I.
\end{array} \end{array}
 \]
\end{defn}

\begin{figure}[H]
\centering
\begin{tikzpicture}[>=latex,
state/.style={circle,draw,minimum size=3mm,inner sep=2pt}
]
\newcommand{\dx}{0.3}
\newcommand{\dy}{0.5}
\begin{scope}
    \node[smallstate] (00)  {};   
    \node (1) [below = \dy of 00] {};
    \node[state,fill=black] (10) [left = \dx of 1] {};
    \node[state] (01) [right = \dx of 1] {};    
    \node[state,fill=black] (11) [below = \dy of 1] {};
    \node [right = \dx of 01] {$\qplus_1$};
    \path[->] (00) edge (10);
    \path[->] (00) edge (01);
    \path[->] (01) edge (11);
    \path[->] (10) edge (11);
\end{scope}

\begin{scope}[xshift = 75pt]
    \node[smallstate] (00)  {};   
    \node (1) [below = \dy of 00] {};
    \node[state] (10) [left = \dx of 1] {};
    \node[state] (01) [right = \dx of 1] {};    
    \node[state,fill=black] (11) [below = \dy of 1] {};
    \node [right = \dx of 01] {$=$};
    \path[->] (00) edge (10);
    \path[->] (00) edge (01);
    \path[->] (01) edge (11);
    \path[->] (10) edge (11);
\end{scope}

\begin{scope} [xshift=155pt]
    \renewcommand{\dx}{0.5}
    \renewcommand{\dy}{0.5}
    \node (0) {};
    \node[smallstate] [above = \dy/2 of 0] (000)  {};
    \node[state] (010) [below = \dy of 000] {};
    \node[state,fill=black] (100) [left = \dx of 010] {};
    \node[state,fill=black] (001) [right = \dx of 010] {};
    \node[state,fill=black] (110) [below = \dy of 100] {};
    \node[state] (101) [below = \dy of 010] {};
    \node[state] (011) [below = \dy of 001] {};
    \node[state,fill=black] (111) [below = \dy of 101] {};
    \path[->] (000) edge (100);
    \path[->] (000) edge (010);
    \path[->] (000) edge (001);
    \path[->] (100) edge (110);
    \path[->] (100) edge (101);
    \path[->] (010) edge (110);
    \path[->] (010) edge (011);
    \path[->] (001) edge (101);
    \path[->] (001) edge (011);
    \path[->] (110) edge (111);
    \path[->] (101) edge (111);
    \path[->] (011) edge (111);
\end{scope}

\end{tikzpicture}
\caption{A schematic representation for $\conf[\set{\set{1},\{1,2\}}] \qplus_1 \conf[\set{\set{1,2}}] = \conf[\set{\set{1},\set{3},\set{1,2},\set{1,2,3}}]$}
\end{figure}

\begin{rem}
Note that the join of two $k$-configurations is, again, a $k$-configuration whereas their $\delta$-overlap is a $(k+1)$-configuration. Moreover, the join is a commutative operator while, in general, the $\delta$-overlap is not.
\end{rem}

\medskip

Next, we define the related notion of intersection configuration, which is essential to state our results.

\begin{defn}\label{def: intersection configuration 2}
Let $G$ be a group, let $k\geq 1$, and let $\mathcal{H}=\{ H_i \}_{i\in [k]}$ be a family of $k$ subgroups of~$G$ (with possible repetitions). For every nonempty $I\subseteq [k]$, we write $H_I =\bigcap_{\, i\in I} H_i$; note that $H_I \cap H_J = H_{I \cup J}$. We define the \emph{intersection configuration determined by $\mathcal{H}$}, denoted by $\conf^\mathcal{H}$,
as 
\[
\begin{array}{rcl} \conf^\mathcal{H} \colon \parts & \to & \{\fg,\, \nfg\} \\[2pt]
I & \mapsto & \left\{ \begin{array}{ll} \fg & \text{if } H_I \text{ is finitely generated,} \\ \nfg & \text{if } H_I \text{ is not finitely generated.} \end{array}\right. \end{array}
 \]
We say that a $k$-configuration $\conf$ is \emph{realizable in $G$} if it is the intersection configuration of some family of $k$ subgroups of $G$; that is, if there exists a family $\mathcal{H}$ of $k$ subgroups of $G$ such that $\conf^{\mathcal H}=\conf$; in this case, we also say that ${\mathcal H}$ \emph{realizes} $\conf$, and that $G$ \emph{admits a realization} of $\conf$.
\end{defn}

For example, the $\zero$ $k$-configuration is always realizable in any group $G$ (realized, for instance, by the trivial $k$-family ${\mathcal H}=\{ \{1\} \}_{i\in [k]}$). On the other hand, the $\one$ $k$-configuration is realizable in a group~$G$ if and only if $G$ contains a non-f.g. subgroup $H\leqslant G$; in this case, it is enough to take ${\mathcal H}=\{ H \}_{i\in [k]}$. As a third example, for a $k$-configuration $\conf$ to be realizable in a free group $\Fn$, a necessary condition is that it does not violate the Howson property, i.e., for every nonempty $ I,J\subseteq [k]$, $(I)\conf=(J)\conf=\fg$ implies $(I\cup J)\conf=\fg$. That is, two white vertices never \emph{meet} at a black vertex in the directed 2-colored hypercube representing $\conf$. In Section~\ref{sec: free} we see that this condition indeed characterizes the realizable configurations in a free group $\Fn$, $n\geq 2$ (see Theorem~\ref{thm: char Fn}).

Of course, if $G_1\leqslant G_2$ then every $k$-configuration realizable in $G_1$ is also realizable in $G_2$. In particular since, for $2\leq r_1, r_2\leq \infty$, $\Free[r_1]\times \ZZ^m$ and $\Free[r_2]\times \ZZ^m$ are both subgroups of each other, the $k$-configurations realizable in $\Free[r_1]\times \ZZ^m$ coincide with those realizable in $\Free[r_2]\times \ZZ^m$; therefore, when considering FTFA groups, it is enough to restrict our attention to $\Free[2]\times \ZZ^m$. In contrast, as we see below, the abelian rank $m$ plays an important role in this respect: the set of $k$-configurations which are realizable in $\Free[2]\times \ZZ^m$ grows strictly with $m$ (see~\Cref{prop: almost0}).

A natural question in this context is whether there exist groups admitting realizations of \emph{any} finite configuration.

\begin{defn} \label{def: intersection-saturated}
A group $G$ is called \defin{intersection-saturated} if every finite configuration is realizable in~$G$.
\end{defn}

In \Cref{sec: realizing} we use the results obtained for FTFA groups to  exhibit explicit examples of finitely presented intersection-saturated groups.

\section{Unrealizable configurations}\label{sec: unrealizable}

The description obtained in~\cite{delgado_stallings_2022} suggests a high degree of flexibility for the intersections of subgroups of FTFA groups: not only these groups are non-Howson, but it is not possible to bound the rank of the intersection of two finitely generated subgroups in terms of their ranks, even when it is finitely generated. In this section we show that, despite this flexibility, there are indeed obstructions to the realizability of $k$-configurations in $\Free[n]\times \ZZ^m$, and that these obstructions are dictated by the ambient abelian rank~$m$. The cornerstone result is the following easy lemma.

\begin{lem}\label{lem: infind}
Let $H_1,\ldots ,H_k $ be $k\geq 2$ arbitrary subgroups of $ \Fn\times \ZZ^m$. If, for some nonempty subsets $I,J\subseteq [k]$, $H_I = \bigcap_{i \in I} H_i$ and $H_J= \bigcap_{j \in J} H_j$ are finitely generated whereas $H_{I\cup J}=H_I\cap H_J$ is not, then there exist $i\in I$ and $j\in J$ such that $L_i = H_i \cap \ZZ^m$ and $L_j = H_j \cap \ZZ^m$ both have infinite index in $\ZZ^m$; that is, $L_i$ and $L_j$ are free-abelian groups of rank strictly smaller than $m$.
\end{lem}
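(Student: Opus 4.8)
The plan is to prove the contrapositive, keeping as a standing hypothesis that $H_I$ and $H_J$ are finitely generated. Negating the conclusion, the failure of the statement means that for every $i\in I$ and every $j\in J$, at least one of $L_i,L_j$ has finite index in $\ZZ^m$. A one-line case analysis turns this into a cleaner dichotomy: it is equivalent to saying that \emph{either} all $L_i$ with $i\in I$ have finite index, \emph{or} all $L_j$ with $j\in J$ do. (Indeed, if some $L_{i_0}$ with $i_0\in I$ had infinite index, then $L_j$ would be forced to have finite index for every $j\in J$.) Since the hypotheses are symmetric in $I$ and $J$, I would assume without loss of generality that all $L_i$, $i\in I$, have finite index in $\ZZ^m$, and then derive that $H_I\cap H_J$ is finitely generated.

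The next step is to feed this into the two-subgroup machinery by applying \Cref{prop: int 2} to the (finitely generated) pair $M=H_I$, $M'=H_J$, using the intersection diagram of \Cref{fig: int 2}. Here $L_I=H_I\cap\ZZ^m=\bigcap_{i\in I}L_i$ is a finite intersection of finite-index subgroups of $\ZZ^m$, hence has finite index in $\ZZ^m$; and so does $L_I+L_J$, since it contains $L_I$. Writing $r=\rk(H_I\pi\cap H_J\pi)$ (finite, by Howson applied to the finitely generated free subgroups $H_I\pi,H_J\pi$) and letting $R\colon\ZZ^r\to\ZZ^m$ be the associated map, \Cref{prop: int 2} reduces finite generability of $H_I\cap H_J$ to the single question of whether $(L_I+L_J)R\preim$ has full rank $r$.

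The key computation is then to observe that the composite $\ZZ^r\xrightarrow{R}\ZZ^m\twoheadrightarrow\ZZ^m/(L_I+L_J)$ has kernel exactly $(L_I+L_J)R\preim$, so it induces an embedding $\ZZ^r/(L_I+L_J)R\preim\hookrightarrow\ZZ^m/(L_I+L_J)$. Since the target is finite (finite index of $L_I+L_J$), the source is finite too, i.e.\ $(L_I+L_J)R\preim$ has finite index, hence rank $r$, in $\ZZ^r$. By \Cref{prop: int 2} this forces $(H_I\cap H_J)\pi$ to be trivial or of finite index in $H_I\pi\cap H_J\pi$, so it is finitely generated; and since a subgroup of $\FTA$ is finitely generated iff its $\pi$-image is, $H_I\cap H_J=H_{I\cup J}$ is finitely generated, contradicting the hypothesis. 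This closes the contrapositive.

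The step I expect to require the most care is the bookkeeping around the negated conclusion: one must correctly reduce the awkward ``for all pairs, one of the two is finite index'' statement to the clean ``one whole side is all finite index'' dichotomy, and then exploit the $I\leftrightarrow J$ symmetry to handle a single case. The genuinely analytic input — that finite index of $L_I+L_J$ survives pull-back along $R$ — is the short embedding argument above, and the auxiliary facts (finiteness of $r$ via Howson, and $H\leqfg\FTA$ iff $H\pi\leqfg\Fn$) are already recorded in the excerpt.
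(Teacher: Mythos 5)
Your proof is correct and is essentially the paper's argument run in the contrapositive direction: both hinge on \Cref{prop: int 2} applied to the pair $H_I,H_J$ (with $r=\rk(H_I\pi\cap H_J\pi)$ finite by Howson) and on the equivalence between $L_I+L_J$ having finite index in $\ZZ^m$ and $(L_I+L_J)R\preim$ having full rank $r$. The only differences are cosmetic: you make explicit the embedding $\ZZ^r/(L_I+L_J)R\preim\hookrightarrow\ZZ^m/(L_I+L_J)$ that the paper leaves implicit, and you organize the negated conclusion into the ``one whole side has finite index'' dichotomy, whereas the paper argues directly that non-finite generation forces $L_I+L_J$, hence both $L_I$ and $L_J$, hence some $L_i$ with $i\in I$ and some $L_j$ with $j\in J$, to have infinite index.
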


\begin{proof}
Consider the intersection diagram for the subgroups $H_I$ and $H_J$ (see Figure~\ref{fig: int 2}, replacing $1,2$ with $I,J$, respectively). Since $H_I$ and $H_J$ are finitely generated, $H_I\pi$ and $H_J\pi$ are so and, by the Howson property of free groups, $H_I\pi \cap H_J\pi$ is finitely generated as well. On the other hand, $H_{I\cup J} = H_I \cap H_J$ (and hence $(H_I\cap H_J)\pi=(L_I+L_J)R\preim\rho\preim$, which is a normal subgroup of $H_I \pi \cap H_J\pi$, see \Cref{prop: 2-int}) is not finitely generated. Therefore, $(H_{I}\cap H_{J})\pi$ must have infinite index in $H_I \pi \cap H_J\pi$, and $L_I+L_J$ (and so both $L_I$ and $L_J$) must also have infinite index in $\ZZ^m$. Since $L_I=\bigcap_{i\in I} L_i$, at least one of the $L_i$'s, $i\in I$, must have infinite index in $\ZZ^m$, i.e., $\rk(L_i)\leq m-1$; similarly for $J$.  
\end{proof}

\begin{rem} \label{rem: independence}
We say that a collection of $r$ subsets $\mathcal{I}=\{I_1, \ldots ,I_r\}\subseteq {\mathcal P}([k])$ is \emph{independent} if, when considering all the $2^r$ possible unions among them, we obtain $2^r$ different results, i.e., whenever for every $S,T\subseteq \{1,\ldots ,r\}$, $\bigcup_{s\in S} I_s =\bigcup_{t\in T} I_t$ implies $S=T$ (understanding that $\bigcup_{s\in \emptyset} I_s =\emptyset$). Note the following immediate properties of this notion: (i) any collection of sets containing the empty set $\varnothing$ is not independent; (ii) for $I\subseteq [k]$, $\{ I\}$ is independent if and only if $I\neq \varnothing$; (iii) if $\{I_1, \ldots ,I_r\}$ is independent then $r\leq k$; (iv) if $\{I_1, \ldots ,I_r\}$ is independent then any subset of it is also independent. Moreover, avoiding coincidences with the total union is enough to get independence: $\{I_1, \ldots ,I_r\}$ is independent if and only if, for every $M\subseteq \{1,\ldots ,r\}$ with $|M|=r-1$, we have $\bigcup_{m\in M} I_m \neq I_1\cup \cdots \cup I_r$. Indeed, the implication to the right is obvious; for the implication to the left, take $S,T\subseteq \{1,\ldots ,r\}$ satisfying $\bigcup_{s\in S} I_s =\bigcup_{t\in T} I_t$; if $S\neq T$ then there is an index contained in one of them and not in the other, say $j\in S\setminus T$, and then\footnote{
We use the standard notation putting a hat to denote a missing element.
}
 \begin{align*}
I_1\cup I_2\cup \cdots \cup I_r
&\,=\,
\textstyle{I_1\cup \cdots \cup \widehat{I_j} \cup \cdots \cup I_r\cup \bigcup_{s\in S} I_s} \\
&\,=\,
\textstyle{I_1\cup \cdots \cup \widehat{I_j}\cup \cdots \cup I_r \cup \bigcup_{t\in T} I_t
\,=\, I_1\cup \cdots \cup \widehat{I_j}\cup \cdots \cup I_r,}
 \end{align*}
contradicting our assumption.
\end{rem}

\begin{prop}\label{prop: main obstruction}
Let $\conf$ be a $k$-configuration for which there is a (independent) collection of $r\geq 2$ nonempty subsets $I_1,\ldots ,I_r\subseteq [k]$ such that, for every $j\in \{1,\ldots ,r\}$, $(I_1\cup \cdots \cup \widehat{I_j}\cup\cdots \cup I_r)\conf=\fg$, but $(I_1\cup \cdots \cup I_r)\conf=\nfg$. Then $\conf$ is not realizable in $\Fn\times \ZZ^{r-2}$.
\end{prop}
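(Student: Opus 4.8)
The plan is to argue by contradiction and to distil the whole configuration into a single linear-algebra statement whose threshold is governed by Helly's theorem. Suppose some family $\mathcal{H}=\{H_i\}_{i\in[k]}$ realizes $\conf$ in $\Fn\times\ZZ^{r-2}$, and write $U=I_1\cup\cdots\cup I_r$ and $U_j=I_1\cup\cdots\cup\widehat{I_j}\cup\cdots\cup I_r$. Setting $G_l:=H_{I_l}$ we have $\bigcap_{l\neq j}G_l=H_{U_j}$ and $\bigcap_{l}G_l=H_U$, so by hypothesis each \emph{coatom} $K_j:=\bigcap_{l\neq j}G_l$ is finitely generated while the \emph{top} $K:=\bigcap_l G_l$ is not. (Independence ensures $U_j\neq U$, so this demand is consistent.) Thus it suffices to show that an $r$-family whose $(r-1)$-fold intersections are all finitely generated, but whose total intersection is not, cannot exist inside $\Fn\times\ZZ^{r-2}$.

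Next I would pass to the free projections. Put $Q:=\bigcap_{j=1}^{r}K_j\pi$. Since each $K_j$ is finitely generated, each $K_j\pi\leqslant\Fn$ is finitely generated, so $Q$ is a finitely generated free group by the Howson property; moreover $Q\subseteq K_j\pi$ for every $j$ and $Q\subseteq\bigcap_l G_l\pi$, so all completions $\Cab{w}{G_l}=w\gamma_l+L_{G_l}$ are nonempty cosets for $w\in Q$. As $Q$ is free I can lift the completion maps to genuine homomorphisms $\gamma_l\colon Q\to\ZZ^{r-2}$ with $w\gamma_l\equiv\cab{w}{G_l,\sigma_l}\pmod{L_{G_l}}$, and bundle them into $\gamma=(\gamma_1,\ldots,\gamma_r)\colon Q\to(\ZZ^{r-2})^r$. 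Writing $W=\mathbb{Q}^{r-2}$, $E_l=L_{G_l}\otimes\mathbb{Q}$, letting $S\leqslant W^r$ be the subspace of tuples $(\mathbf{u}+\mathbf{v}_l)_l$ with $\mathbf{u}\in W$, $\mathbf{v}_l\in E_l$ (the ``common-completion'' subspace), and $F_j\leqslant W^r$ the $j$-th coordinate subspace, \Cref{lem: nonempy multiple intersection} identifies, rationally, $K\pi=\{w\in Q: w\gamma\in S\}$ and, for $w\in Q$, $w\in K_j\pi$ iff $w\gamma\in S+F_j$.

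From this I would extract two facts. Because $Q\subseteq K_j\pi$ for all $j$, the image $\mathcal{C}:=(Q\gamma)\otimes\mathbb{Q}$ lies in $S+F_j$ for every $j$. On the other hand $K\pi$ is a non-finitely-generated subgroup of the finitely generated free group $Q$, hence has infinite index, so $\mathcal{C}\not\subseteq S$. Choosing $\mathbf{x}=(\mathbf{x}_1,\ldots,\mathbf{x}_r)\in\mathcal{C}\setminus S$, the membership $\mathbf{x}\in S+F_j$ unwinds to the statement that the $r-1$ affine subspaces $\{\mathbf{x}_l+E_l: l\neq j\}$ of $W$ have a common point; hence every ``omit-one'' subfamily of the $r$ cosets $\{\mathbf{x}_l+E_l\}_{l=1}^{r}$ meets.

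The crux is now purely dimensional. Since $\dim W=r-2$, every subfamily of size $(r-2)+1=r-1$ is contained in one of the omit-one families and therefore meets; Helly's theorem (Helly number $\dim W+1=r-1$ for affine, hence convex, subsets of $W$) then forces the entire family $\{\mathbf{x}_l+E_l\}_l$ to share a common point $\mathbf{u}$. But then $\mathbf{x}_l\equiv\mathbf{u}\pmod{E_l}$ for all $l$, i.e. $\mathbf{x}\in S$, contradicting the choice of $\mathbf{x}$. The delicate step I expect to fight with is the completion bookkeeping of the second paragraph---making sure that $K\pi$ and each $K_j\pi$ really are the $\gamma$-preimages of $S$ and $S+F_j$, and that rational index faithfully reflects finite generability---after which the numerology is rigid: $m=r-2$ is exactly the Helly threshold, while for $m=r-1$ the final step would require the full family to meet, which is precisely what can fail, in agreement with realizability in larger abelian rank.
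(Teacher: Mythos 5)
Your proof is correct, but it takes a genuinely different route from the paper's. The paper argues by induction on $r$: the base case $r=2$ is just the Howson property of $\Fn$, and the inductive step uses \Cref{lem: infind} to locate an index $\ell$ with $\rk(L_\ell)\leqslant r-3$, then replaces each $H_i$ by $H_i\cap H_{I_r}$ so as to land inside a copy of $\Fn\times\ZZ^{r-3}$ and invoke the case $r-1$. You instead give a direct, non-inductive argument: after the same reduction to an $r$-family whose omit-one intersections are finitely generated but whose total intersection is not, you encode the common-completion condition as membership of $w\gamma$ in the integral sublattice $S_{\ZZ}=\Delta(\ZZ^{r-2})+\prod_l L_{G_l}$, observe that the omit-one hypotheses force $Q\gamma$ into every $S+F_j$, and close with Helly's theorem for affine flats in a space of dimension $r-2$. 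This is attractive because it explains the threshold $m=r-2$ conceptually --- it is exactly the Helly number $\dim W+1=r-1$ matching the number of omit-one subfamilies, which is also why \Cref{prop: almost0} shows the bound is sharp --- whereas the paper's induction hides this behind the rank count in \Cref{lem: infind}. The price is that the two steps you flag as delicate really must be written out: (i) deducing $\mathcal{C}\not\subseteq S$ from the infinite index of $K\pi$ in $Q$ requires noting that $S\cap\ZZ^{(r-2)r}$ contains $S_\ZZ$ with finite index, so that $\mathcal{C}\subseteq S$ would make $K\pi=\{w\in Q \st w\gamma\in S_\ZZ\}$ a finite-index, hence finitely generated, subgroup of $Q$, contradicting that $K$ is not finitely generated; and (ii) Helly must be applied over $\mathbb{Q}$, which is harmless for affine flats since a rational linear system with a real solution has a rational one. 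Neither is a gap, only bookkeeping; the structure of the argument is sound.
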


\begin{proof}
Note that, by \Cref{rem: independence}, the hypothesis on $\conf$ forces the collection of subsets $\{ I_1,\ldots ,I_r\}$ to be independent; in particular, they are all nonempty and $2\leq r\leq k$.

We prove the non realizability of $\conf$ in $\Fn\times \ZZ^{r-2}$, by induction on $r\geq 2$. In the case $r=2$, the statement is clearly true, as otherwise the Howson property for free groups $\Fn =\Fn\times \ZZ^{2-2}$ would be violated.

Now, assume the claim true for $r-1 \geq 2$, and let us prove it for $r$. Let $\conf$ be a $k$-configuration with $I_1,\ldots ,I_r$ satisfying the hypothesis, assume it is realizable in $\Fn\times \ZZ^{r-2}$, say by subgroups $H_1,\ldots ,H_k\leqslant \Fn\times \ZZ^{r-2}$, and let us find a contradiction. We have that, for every $j\in \{1,\ldots ,r\}$, $H_{I_1\cup \cdots \cup \widehat{I_j}\cup\cdots \cup I_r}= H_{I_1}\cap \cdots \cap \smash{\widehat{H_{I_j}}}\cap \cdots \cap H_{I_r}$ is finitely generated, while $H_{I_1\cup \cdots \cup I_r}=H_{I_1}\cap \cdots \cap H_{I_r}$ is not. Since both $H_{I_2\cup \cdots \cup I_r}$ and $H_{I_1\cup I_3\cup \cdots\cup I_r}$ are finitely generated but their intersection $
H_{I_1\cup \cdots \cup I_r}$ is not, \Cref{lem: infind} tells us that there exists $\ell \in I_1\cup \cdots \cup I_r$ such that $L_{\ell}$ has infinite index in $\ZZ^{r-2}$ and so, rank less than or equal to $r-3$. Up to renumbering the subsets, we can assume $\ell\in I_r$ and so, $L_{I_r}=\bigcap_{i\in I_r} L_i$ also has rank less than or equal to $r-3$. Note that, as a group, $H_{I_r}$ is then isomorphic to the direct product of a free group $F$, and $L_{I_r}\isom \ZZ^{\rk (L_{I_r})}\leqslant \ZZ^{r-3}$. 

Consider now $H'_i=H_i\cap H_{I_r}$, for $i=1,\ldots ,k$. On one hand, these are all subgroups of $H_{I_r}\isom F\times \ZZ^{\rk (L_{I_r})}\leqslant \Free[n]\times \mathbb{Z}^{r-3}$. On the other hand, the (independent) collection of subsets $I_1,\ldots ,I_{r-1}\subseteq [k]$ satisfy that, for every $j\in \{1,\ldots ,r-1\}$,
 \begin{align*}
H'_{I_1\cup \cdots \cup \widehat{I_j}\cup\cdots \cup I_{r-1}}
&\,=\, H'_{I_1}\cap \cdots \cap \widehat{H'_{I_j}}\cap \cdots \cap H'_{I_{r-1}}\\
&\,=\, (H_{I_1}\cap H_{I_r})\cap \cdots \cap \reallywidehat{(H_{I_j}\cap H_{I_r})}\cap \cdots \cap (H_{I_{r-1}}\cap H_{I_r})\\
&\,=\, H_{I_1}\cap \cdots \cap \reallywidehat{H_{I_j}}\cap \cdots \cap H_{I_r}
\,=\, H_{I_1\cup \cdots \cup \widehat{I_j}\cup\cdots \cup I_r}
\end{align*}
is finitely generated, whereas
 \begin{equation*}
H'_{I_1\cup \cdots \cup I_{r-1}}
\,=\, H'_{I_1}\cap \cdots \cap H'_{I_{r-1}}= H_{I_1}\cap \cdots \cap H_{I_{r-1}}\cap H_{I_r}=H_{I_1\cup \cdots \cup I_r}
 \end{equation*}
is not. This contradicts the inductive hypothesis.  
\end{proof}

\begin{exm}
This last result shows explicit restrictions in the lattice of subgroups of $\FTA$. For example, for $k=r=3$, it is telling us the following: if $H_1,H_2,H_3\leqslant \Fn\times \ZZ$ are arbitrary subgroups, and $H_1\cap H_2$, $H_1\cap H_3$, and $H_2\cap H_3$ are all finitely generated, then $H_1\cap H_2\cap H_3$ must be finitely generated as well. 
\end{exm}

The proposition below shows that, by strictly increasing the abelian rank $m$, the set of configurations realizable in $\FTA$ strictly increases as well. 

\begin{prop}\label{prop: almost0}
The $k$-configuration $\conf[{[k]}]$ is realizable in $\Free[2] \times \ZZ^{k-1}$, but not in $\Free[2] \times \ZZ^{k-2}$. 
\end{prop}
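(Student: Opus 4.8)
The statement splits into a non-realizability half and a realizability half, which I would attack by completely different means.

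For the non-realizability in $\Free[2]\times\ZZ^{k-2}$, I expect this to be an immediate application of \Cref{prop: main obstruction}. Take the collection of singletons $I_j=\{j\}$ for $j=1,\dots,k$ (which is independent and has $r=k$). For every $j$ the union $I_1\cup\cdots\cup\widehat{I_j}\cup\cdots\cup I_k=[k]\setminus\{j\}$ is a proper nonempty subset of $[k]$, so $\conf[{[k]}]$ sends it to $\fg$, whereas $I_1\cup\cdots\cup I_k=[k]$ is sent to $\nfg$. Hence the hypotheses of \Cref{prop: main obstruction} hold with $r=k$, yielding that $\conf[{[k]}]$ is not realizable in $\Fn\times\ZZ^{k-2}$ for any $n$, and in particular not in $\Free[2]\times\ZZ^{k-2}$.

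For the realizability in $\Free[2]\times\ZZ^{k-1}$, I would exhibit an explicit family. Write $\Free[2]=\langle x,y\rangle$ and let $\phi\colon\Free[2]\to\ZZ$, $w\mapsto|w|_x$, be the total-$x$-exponent map, so that $\ker\phi=\normalcl{y}$ is not finitely generated (as in \Cref{exm: Fn x Z^n no Howson}). Let $e_1,\dots,e_{k-1}$ denote the standard basis of $\ZZ^{k-1}$. The plan is to take $k$ subgroups all with full free part $H_i\pi=\Free[2]$, whose completions depend only on $|w|_x$, and whose abelian parts $L_i=H_i\cap\ZZ^{k-1}$ are hyperplanes placed in general position. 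Concretely I set
\[
H_i=\langle\, x,\,y\,;\, L_i\,\rangle\quad(1\le i\le k-1),\qquad H_k=\langle\, x\t^{e_1},\,y\,;\, L_k\,\rangle,
\]
with $L_i=\{\mathbf c\in\ZZ^{k-1}:c_i=0\}$ for $i<k$ and $L_k=\{\mathbf c\in\ZZ^{k-1}:\sum_j c_j=0\}$, so that by \Cref{cor: compl computable} one has $\Cab{w}{H_i}=L_i$ for $i<k$ and $\Cab{w}{H_k}=|w|_x\,e_1+L_k$. The verification then reduces to linear algebra, using that a subgroup of $\FTA$ is finitely generated iff its free part is. For $\varnothing\neq I\subseteq[k]$ one has $(H_I)\pi=\{w:\bigcap_{i\in I}\Cab{w}{H_i}\neq\varnothing\}$, and with $n=|w|_x$ this amounts to solving, for $\mathbf v\in\ZZ^{k-1}$, the system $v_i=0$ $(i\in I,\,i<k)$ together with $\sum_j v_j=n$ when $k\in I$. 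If $I$ is proper this is solvable for every $n$ — by $\mathbf v=\mathbf 0$ if $k\notin I$, and by $\mathbf v=n\,e_{i^*}$ for any index $i^*<k$ with $i^*\notin I$ if $k\in I$ — so $(H_I)\pi=\Free[2]$ is finitely generated; whereas for $I=[k]$ the constraints $v_1=\cdots=v_{k-1}=0$ force $\mathbf v=\mathbf 0$ and hence $n=0$, so $(H_{[k]})\pi=\ker\phi=\normalcl{y}$ is not finitely generated. Thus this family realizes $\conf[{[k]}]$.

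The main obstacle I anticipate is conceptual rather than computational: one must arrange the obstruction to finite generability so that it is triggered \emph{only} by the presence of all $k$ subgroups simultaneously. This is exactly a general-position condition on the $k$ hyperplane normals $e_1,\dots,e_{k-1},\mathbf 1$ in $\ZZ^{k-1}$ — any $k-1$ of them are linearly independent (so dropping any subgroup leaves a system solvable for all $n$), while the unique relation $\sum_{i<k}e_i=\mathbf 1$ is incompatible with the prescribed right-hand sides (encoded by the factor $\t^{e_1}$ in $H_k$), forcing $n=0$ for the full intersection. The only bookkeeping to be careful with is that \emph{every} proper subfamily, not merely those of size $k-1$, must remain solvable for all $n$, which the explicit case analysis above handles. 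It is precisely this need for $k-1$ independent normals that pins the abelian rank at exactly $k-1$, matching the sharp threshold given by the non-realizability half.
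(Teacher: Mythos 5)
Your proof is correct and follows essentially the same route as the paper: the non-realizability half is the identical application of \Cref{prop: main obstruction} with the singletons $I_j=\{j\}$, and your realizing family is the paper's construction with the roles of $x$ and $y$ interchanged (the paper puts the hyperplanes $\{c_i=0\}$ on $H_1,\dots,H_{k-1}$, twists $y$ by $\t^{\mathbf{e_1}}$ in $H_k$, and lands on $\normalcl{x}_{\Free[2]}$, whereas you twist $x$ and land on $\normalcl{y}_{\Free[2]}$). The only detail the paper treats that your formulas skip is the degenerate case $k=1$, where there is no $\mathbf{e_1}$ in $\ZZ^{0}$ and one simply takes $H_1$ to be any non-finitely-generated subgroup of $\Free[2]$.
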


\begin{proof}
The second claim follows from \Cref{prop: main obstruction}, 
taking $r=k$ and $I_1 = \set{1}, \ldots , I_k=\set{k}$.

Let us prove the first claim. For $k=1$ the statement is just saying that the $1$-configuration $\{1\}\mapsto \nfg$, namely $\one$, is realizable in $\Free[2]\times \ZZ^{1-1}=\Free[2]$. This is obviously true since it is enough to take $H_1\leqslant \Free[2]$ to be any non-f.g. subgroup. 

Assume $k\geqslant 2$. We need to construct a family of subgroups $\mathcal{H} = \{ H_1,\ldots ,H_k\}$ of $\Free[2] \times \ZZ^{k-1}$ such that all partial intersections $H_I$  (where $\varnothing \neq I\subsetneq [k]$) are finitely generated, while the total one $H_{[k]}$ is not. Let $\{x,y\}$ be two free letters generating $\Free[2]$, let $\{\mathbf{e_1}, \ldots ,\mathbf{e_{k-1}}\}$ be the canonical free-abelian basis for $\ZZ^{k-1}$, and consider the following subgroups:
 \begin{align*}
H_1 &\,=\,  \langle x, y; \t^\mathbf{e_2}, \ldots, \t^\mathbf{e_{k-1}} \rangle \,\leqslant\, \Free[2] \times \ZZ^{k-1}, \\
H_2 &\,=\, \langle x, y; \t^\mathbf{e_1}, \t^\mathbf{e_3}, \ldots, \t^\mathbf{e_{k-1}} \rangle \,\leqslant\, \Free[2] \times \ZZ^{k-1}, \\
&\hspace{7pt} \vdots \\
H_{k-1} &\,=\, \langle x, y; \t^\mathbf{e_1}, \ldots, \t^\mathbf{e_{k-2}} \rangle \,\leqslant\, \Free[2] \times \ZZ^{k-1}, \\
H_k &\,=\, \langle x, y\t^\mathbf{e_1}; \t^\mathbf{e_2-e_1}, \ldots, \t^\mathbf{e_{k-1}-e_1} \rangle =\langle x, y\t^\mathbf{e_1}, \ldots, y\t^\mathbf{e_{k-1}} \rangle \,\leqslant\, \Free[2] \times \ZZ^{k-1}.
 \end{align*}
For a given nonempty set of indices $I\subseteq [k]$, let us compute $H_I$ by distinguishing the following three possible cases:

\emph{Case 1: $k\notin I$.} In this case, clearly, $H_I=\langle x, y; \t^\mathbf{e_j} \text{ for } j\notin I \rangle$, which is finitely generated.

\emph{Case 2: $k\in I\subsetneq [k]$.} In this case, without loss of generality we may assume that $1\notin I$ and so, $H_I$ is again finitely generated:
 \begin{align*}
H_I=H_{I\setminus \{k\}} \cap H_k & =\langle x, y; \t^\mathbf{e_j} \text{ for } j\not\in I \rangle \cap \langle x, y\t^\mathbf{e_1}, y\t^\mathbf{e_2}, \ldots ,y\t^\mathbf{e_{k-1}} \rangle \\
& =
\{w(x,y)\t^{\mathbf{a}} \mid a_j=0, \forall j\in I \setminus \{k\} \} \cap 
\{w(x,y)\t^{\mathbf{a}} \mid a_1+\cdots +a_{k-1}=|w|_y\} \\
& =
\{w(x,y)\t^{\mathbf{a}} \mid a_1+\cdots +a_{k-1}=|w|_y \text{ and } a_j=0\,\, \forall j\in I \setminus \{k\}\} \\
& =
\langle x, y\t^\mathbf{e_j} \text{ for } j\not\in I \rangle \\
& =
\langle x, y\t^{\mathbf{e_1}}; \t^\mathbf{e_j-e_1} \text{ for } j\not\in I \rangle.
 \end{align*}

\emph{Case 3: $I=[k]$}. In this case, 
 \begin{equation*}
H_I=(H_1\cap \cdots \cap H_{k-1})\cap H_k =\langle x, y \rangle \cap \langle x, y\t^\mathbf{e_1}; \t^\mathbf{e_2-e_1}, \ldots ,\t^\mathbf{e_{k-1}-e_1}\rangle =  \normalcl{x}_{\Free[2]},
 \end{equation*}
the normal closure of $x$ in $\Free[2]$, which is not finitely generated. 
\end{proof}

Apart from $\conf[{[k]}]$ being an explicit example of a $k$-configuration which is realizable in $\Free[2]\times \ZZ^{k-1}$ but not in $\Free[2]\times \ZZ^{k-2}$, we observe that there are strong restrictions which \emph{every} realization of $\conf[{[k]}]$ in $\Free[2]\times \ZZ^{k-1}$ must satisfy. This supports the idea that finding an exact characterization of the $k$-configurations realizable in $\Free[2] \times \ZZ^{m}$ for a given value of $m$, may be a complicated task.  

\begin{cor}
Every realization of the $k$-configuration $\conf[{[k]}]$ in $\Free[2] \times \ZZ^{k-1}$ must mandatorily be with subgroups $H_1,\ldots ,H_k\leqslant \Free[2] \times \ZZ^{k-1}$ satisfying $\rk (L_{H_i})\geq k-2$, $i=1,\ldots ,k$.
\end{cor}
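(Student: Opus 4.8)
The plan is to argue by contradiction, reducing to the non-realizability half of \Cref{prop: almost0}. First, note that for $k\leqslant 2$ the bound $\rk(L_{H_i})\geqslant k-2$ is automatic, since ranks are nonnegative; so I assume $k\geqslant 3$. I would also record that the configuration $\conf[{[k]}]$ is invariant under permuting the indices $1,\ldots,k$ (its only black vertex is the full set $[k]$), so relabelling a realization yields another realization of $\conf[{[k]}]$. Consequently it suffices to establish $\rk(L_{H_k})\geqslant k-2$ for the single index $k$ and conclude for every $i$ by symmetry.

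So I would suppose, towards a contradiction, that some realization $H_1,\ldots,H_k\leqslant \Free[2]\times\ZZ^{k-1}$ of $\conf[{[k]}]$ has $\rk(L_{H_k})\leqslant k-3$. The key move is to intersect everything with $H_k$: set $H'_j=H_j\cap H_k$ for $j\in[k-1]$, regarded as $k-1$ subgroups of $H_k$. Using $H_I\cap H_J=H_{I\cup J}$, for every $\varnothing\neq I\subseteq[k-1]$ one gets $H'_I=\bigcap_{j\in I}(H_j\cap H_k)=H_{I\cup\{k\}}$. Reading off the resulting configuration: when $I\subsetneq[k-1]$ the set $I\cup\{k\}$ is a proper subset of $[k]$, so $H'_I=H_{I\cup\{k\}}$ is finitely generated; whereas $H'_{[k-1]}=H_{[k]}$ is not. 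Hence $\{H'_j\}_{j\in[k-1]}$ realizes exactly the almost-$\zero$ $(k-1)$-configuration $\conf[{[k-1]}]$ inside $H_k$.

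To close the loop, I would invoke the FTFA subgroup structure. The subgroup $H_k$ is finitely generated, being the singleton intersection $H_{\{k\}}$, which $\conf[{[k]}]$ declares finitely generated; thus $H_k\isom H_k\pi\times L_{H_k}\isom\Free[r_k]\times\ZZ^{s}$ with $r_k<\infty$ and $s=\rk(L_{H_k})\leqslant k-3$. Since every free group of finite rank embeds in $\Free[2]$ and $\ZZ^{s}\leqslant\ZZ^{k-3}$, the group $H_k$ embeds in $\Free[2]\times\ZZ^{k-3}$. As realizability passes to overgroups, $\conf[{[k-1]}]$ would then be realizable in $\Free[2]\times\ZZ^{(k-1)-2}=\Free[2]\times\ZZ^{k-3}$, contradicting the second assertion of \Cref{prop: almost0}. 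This contradiction forces $\rk(L_{H_k})\geqslant k-2$, and symmetry finishes the argument.

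The main obstacle, and the only genuinely delicate point, is the bookkeeping of the reduction: checking that intersecting with $H_k$ sends the $(k-1)$-configuration realized by the $H'_j$ exactly onto $\conf[{[k-1]}]$ (proper subintersections remaining finitely generated while the total one does not), together with the fact that the abelian rank drops by the controlled amount $s\leqslant k-3$ that matches the exact threshold in \Cref{prop: almost0}. This is precisely the inductive step already implicit in the proof of \Cref{prop: main obstruction}, repurposed here to extract a lower bound on $\rk(L_{H_i})$ rather than a non-realizability statement; the tightness of \Cref{prop: almost0} is exactly what pins the bound at $k-2$.
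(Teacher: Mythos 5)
Your argument is correct and is essentially the paper's own proof: both intersect the family with $H_i$ to obtain a realization of $\conf[{[k-1]}]$ inside $H_i \isom \Free[r]\times\ZZ^{\rk(L_{H_i})}$, and then invoke the non-realizability half of \Cref{prop: almost0} to force $\rk(L_{H_i})\geqslant k-2$. Your version merely adds the (harmless) extra care of phrasing it as a contradiction and checking the embedding of $H_i$ into $\Free[2]\times\ZZ^{k-3}$ explicitly.
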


\begin{proof}
Suppose that a certain family $\mathcal{H}=\{H_1,\ldots ,H_k\}$ of subgroups of $\Free[2] \times \ZZ^{k-1}$ realizes $\conf[{[k]}]$.
Then, clearly, the new family $\{H_1\cap H_i, \ldots , H_{i-1}\cap H_i, H_{i+1}\cap H_i, \ldots ,H_k\cap H_i\}$ realizes $\conf[{[k-1]}]$. 
Since, for all $j=1,\ldots ,\widehat{i}, \ldots ,k$, $H_j\cap H_i\leqslant H_i \isom \Free[r]\times L_{H_i}\isom \Free[r]\times \mathbb{Z}^{\rk (L_{H_i})}$ for some $0\leq r\leq \infty$, \Cref{prop: almost0} tells us that $\rk (L_{H_i})\geq k-2$, for $i=1,\ldots ,k$.
\end{proof}

\begin{rem}\label{prop: must}
Any realization of a $k$-configuration $\conf[\mathcal{I}]$ in $\Free[n] \times \ZZ^{m}$ must be by subgroups $H_1,\ldots ,H_k$ satisfying $\rk \big( \bigcap_{i\in I} H_i\pi \big) \geq 2$, for every $I\in \mathcal{I}$: since $\bigcap_{i\in I} H_i$ is not finitely generated, this follows immediately from \Cref{prop: mint fg iff} applied to the subgroups $H_i$ with $i\in I$. Alternatively, since $(\bigcap_{i\in I} H_i)\pi$ is not finitely generated and it is contained in $\bigcap_{i\in I} H_i\pi$, this last subgroup must be free nonabelian. 
\end{rem}

\section{\boldmath Realizing $k$-configurations}\label{sec: realizing}

Our goal in this section is to show that, for every $k\geq 1$, any $k$-configuration $\conf$ is realizable in $\FTA$, for big enough $m$. Note that, for this purpose, we can always assume $n=2$. Proposition~\ref{prop: almost0} already shows an interesting family of realizable $k$-configurations. It is straightforward to see that, conveniently adding trivial subgroups, it can be restated as follows.

\begin{lem} \label{lem: pretotal2}
Let $n\geq 2$. For every nonempty subset $I_0 \subseteq [k]$, the almost-$\zero$ $k$-configuration $\conf[I_0]$ is realizable in $\Free[n]\times \ZZ^{|I_0|-1}$ by subgroups $H_1,\ldots ,H_k$ further satisfying $\rk \big( \bigcap_{i\in I} H_i\pi \big) \neq 1$ for every nonempty $I\subseteq [k]$.
\end{lem}

\begin{proof}
Without loss of generality, we may assume that $I_0=[r]=\{1,\ldots ,r\}$, where $1\leq r=|I_0|\leq k$. By \Cref{prop: almost0}, the $r$-configuration $\conf[{I_0}]$ is realizable in $\Free[2]\times \ZZ^{r-1}$ by subgroups $H_1,\ldots ,H_r\leqslant \Free[2]\times \ZZ^{r-1}$ satisfying that $\rk \big( \bigcap_{j=1}^r H_j\pi \big) \geq 2$ (as it is mandatory according to~\Cref{prop: must}). Let $H_{r+1}=\cdots =H_{k}= \Trivial$. For any nonempty $I\subseteq [k]$ let us look at $H_I$ by distinguishing three cases:

\emph{Case 1: $I=[r]$.} By construction, $H_I$ is not finitely generated;

\emph{Case 2: $ I\subsetneq [r]$.} By construction, $H_I$ is finitely generated.

\emph{Case 3: $I\not\subseteq [r]$.} In this case, $H_I=H_{I\cap [r]}\cap \{1\}=\{1\}$ is obviously finitely generated.

\noindent Hence, we have realized the $k$-configuration $\conf[I_0]$ in $\Free[2]\times \ZZ^{r-1}\leqslant \Free[n]\times \ZZ^{|I_0|-1}$ by subgroups $H_1,\ldots ,H_k$ such that, for every nonempty $I\subseteq [k]$, $\bigcap_{i\in I} H_i\pi$ is either trivial or has rank $\geq 2$; therefore, $\rk \big( \bigcap_{i\in I} H_i\pi \big)\neq 1$, as required.
\end{proof}

Note that, in order to realize $\conf\join \conf'$ in a group $G$, we need to find $k$ subgroups $H_1,\ldots ,H_k\leqslant G$ such that, for every nonempty $ I\subseteq [k]$, $H_I$ is finitely generated whenever $(I)\conf =\fg$ \emph{and} $(I)\conf'=\fg$, and it is not finitely generated whenever $(I)\conf=\nfg$ \emph{or} $(I)\conf'=\nfg$.
Equivalence \eqref{eq: technical} matches the intended purpose. Note, however, that in order to use it (\Cref{thm: technical}), we need to include the technical hypothesis $\min(r',r'')\neq 1$, since, as shown in \Cref{ex: rk1}, equivalence \eqref{eq: technical} may fail if $\min(r',r'') = 1$.
Taking this into account, the result below provides a way to realize in the family of FTFA groups the join of two already realizable configurations (at some abelian cost).

\begin{prop}\label{prop: sum general}
Let $\conf'$ be a $k$-configuration realizable in $\Free[n']\times \ZZ^{m'}$ by $H'_1,\ldots ,H'_k$, and $\conf''$ be a $k$-configuration realizable in $\Free[n'']\times \ZZ^{m''}\!$ by $H''_1,\ldots ,H''_k$; and, for every nonempty $I\subseteq [k]$, let $r'_I=\rk \big( \bigcap_{i\in I} H'_i\pi \big)$ and $r''_I=\rk \big( \bigcap_{i\in I} H''_i\pi \big)$. If $\min\{r'_I,r''_I\} \neq 1$ for every $I\subseteq [k]$ with $|I|\geq 2$, then $\conf'\join \conf''$ is realizable in $\Free[n'+ n''] \times \ZZ^{m'+ m''}$.
\end{prop}

\begin{proof}

Under the assumptions of the statement, consider the subgroups 
 \[
H_1=\langle H'_1, H''_1\rangle, \ldots ,H_k
=
\langle H'_k, H''_k\rangle 
\leqslant 
(\Free[n']\times \ZZ^{m'})\cast (\Free[n'']\times \ZZ^{m''})
\isom 
\Free[n'+n'']\times \ZZ^{m'+m''}.
 \]

For every singleton $I=\{i\}\subseteq [k]$, it is clear that $H_i$ is finitely generated if and only if both $H'_i$ and $H''_i$ are finitely generated. And, for each $I\subseteq [k]$ with $|I|\geq 2$, we apply~\Cref{thm: technical} to obtain that 
\[
\bigcap\nolimits_{i\in I} H_i \text{ is f.g.} \ \Leftrightarrow \ \text{both } \bigcap\nolimits_{i\in I} H'_i \text{ \ and \ } \bigcap\nolimits_{i\in I} H''_i \text{ are f.g.}
 \]
This means that $H_1,\ldots ,H_k$ realize the $k$-configuration $\conf'\join\conf''$ in the group $\Free[n'+n'']\times \ZZ^{m'+m''}$. 
\end{proof}

\begin{cor}
Let $n\geq 2$, $k\geq 1$, and $i \in [k]$. If a $k$-configuration $\conf$ is realizable in $\Fn \times \Zm$, then $\conf \join \conf[\{i\}]$ is also realizable in $\Fn \times \Zm$; the converse is not true, in general. 
\end{cor}

\begin{proof}
Take two free nonabelian subgroups $F', F''\leqslant \Free[n]$ in free factor position, $\gen{F', F''}=F'*F''$. Let $H'_1,\ldots,H'_k \leqslant G'=F'\times \ZZ^m$ be a realization of $\conf$,
and let $H''_i\leqslant G''=F''\times \ZZ^0$ be non-f.g. and $H''_j=1\leqslant G''$ for $j\neq i$, (of course, realizing $\conf[\{i\}]$). Since $\min\{r'_I,r''_I\}=0\neq 1$ for every $I\subseteq [k]$, $|I|\geq 2$, we can apply \Cref{prop: sum general} to obtain a realization of $\conf \join \conf[\{i\}]$ in $G'\cast G''\leqslant \FTA$.  

The converse is not true since the 2-configuration $\conf[\{\{1\}, \{1,2\}\}]$ is realizable in $\Fn$, whereas $\conf[\{1,2\}]$
is not.
\end{proof}

Below we present a variation of \Cref{prop: sum general} which will be crucial in order to use this result inductively in the proof of \Cref{thm: all realizable}.

\begin{cor}\label{cor: sum}
Let $\conf'$ be realizable in $\Free[n']\times \ZZ^{m'}$ by subgroups $H'_1,\ldots ,H'_k$ satisfying $r'_I
\neq 1$, for all $I\subseteq [k]$ with $|I|\geq 2$, and let $\conf''$ be realizable in $\Free[n'']\times \ZZ^{m''}$ by subgroups $H''_1,\ldots ,H''_k$ satisfying $r''_I
\neq 1$, for all $I\subseteq [k]$ with $|I|\geq 2$. Then, $\conf'\join \conf''$ is realizable in $\Free[n'+n'']\times \ZZ^{m'+m''}$, by the subgroups $H_1=\gen{H'_1,H''_1},\ldots,H_k=\gen{H'_k,H''_k}$, satisfying (again) $r_I =\rk \big( \bigcap_{i\in I} H_i\pi \big) \neq 1$, for all $I\subseteq [k]$ with $|I|\geq 2$. 
\end{cor}

\begin{proof}
Note that we are under slightly stronger conditions that in \Cref{prop: sum general}; hence, the realizability of $\conf'\join \conf''$ follows immediately. Moreover, by \Cref{prop: = for free}, for each $I\subseteq [k]$ with $|I|\geq 2$, $\bigcap_{i\in I} H_i\pi =\big( \bigcap_{i\in I} H'_i\pi \big) * \big( \bigcap_{i\in I} H''_i\pi \big)$. Since, by hypothesis, $\rk\big( \bigcap_{i\in I} H'_i\pi \big) \neq 1$ and $\rk \big( \bigcap_{i\in I} H''_i\pi \big)\neq 1$ , we conclude that $\rk \big( \bigcap_{i\in I} H_i\pi \big)\neq 1$, as claimed. 
\end{proof}

Finally, iterating \Cref{cor: sum}, we obtain the main results from this section.

\begin{thm}\label{thm: all realizable}
Every finite configuration $\conf[\mathcal{I}]$ is realizable in $\Fn\times \ZZ^m$, for $m \geq  \sum_{I\in \mathcal{I}} (|I|-1)$ and~$n\geq 2$.
\end{thm}

\begin{proof}
Let $\conf$ be a $k$-configuration. If $\conf$ is the $\zero$ $k$-configuration ($\conf =\conf[\varnothing]$) then it can be realized by the trivial subgroups $H_1=\cdots =H_k=\{1\}\leqslant \Free[n]\times \mathbb{Z}^0$. If it is an almost-$\zero$ $k$-configuration $\conf =\conf[I_0]$ then, by Lemma~\ref{lem: pretotal2}, it can be realized by subgroups $H_1,\ldots ,H_k \leqslant \Free[n]\times \ZZ^{|I_0|-1}$, further satisfying $\rk \big( \bigcap_{i\in I} H_i\pi \big) \neq 1$, for each nonempty $I\subseteq [k]$.

In any other case, let $(1)\conf\preim=\{I_1, \ldots, I_r\}$ be the support of $\conf$, and we can decompose $\conf$ as the join of the corresponding almost-$\zero$ $k$-configurations $\conf=\conf[{I_1}]\join \cdots \join \conf[{I_r}]$. Now, realize each $\conf[{I_i}]$ in $\Free[n]\times \ZZ^{|I_i|-1}$ as above, and repeatedly apply \Cref{cor: sum}, to get a realization of $\conf$ in $\Free[rn]\times \ZZ^{|I_1|+\cdots +|I_r|-r}\leqslant \Free[n]\times \ZZ^{m}$.
\end{proof}

\begin{rem}
The proof of \Cref{thm: all realizable} actually shows a bit more than the statement: any $k$-configuration $\conf$ is realizable in $\Fn\times \ZZ^m$, for every $n\geq 2$ and $m\geq \sum_{(I)\conf=\nfg} (|I|-1)$, \emph{by subgroups $H_1,\ldots ,H_k$ further satisfying that, for every $I\subseteq [k]$ with $|I|\geq 2$, $\rk \big( \bigcap_{\, i\in I} H_i\pi \big) \neq 1$}. 
\end{rem}

\begin{exm}
Let $\mathcal{I}=\{\{1\}, \{2,3\}, \{1,3,4\}, \{2,3,4\}\}$ and consider the 4-configuration  $\conf=\conf[\mathcal{I}]$.  
Let us follow the previous inductive argument to realize $\conf$ in $\Free[2]\times \ZZ^m$ for big enough $m$. We have to find a family of four subgroups $\mathcal{H}=\{ H_1,H_2,H_3,H_4\}$ of $\Free[2]\times \ZZ^m$ such that $\conf^{\mathcal{H}}=\conf$.  

Decomposing $\conf$ as the join of the corresponding almost-$\zero$ $4$-configurations, we have 
 \[
\conf=\conf[{\{1\}}]\join \conf[{\{2,3\}}]\join \conf[{\{1,3,4\}}]\join \conf[{\{2,3,4\}}].
 \]
Let $\Free[2]=\langle x,y \mid -\rangle$ and consider the bi-infinite list of freely independent words $u_j=y^{-j}xy^j\in \Free[2]$, for $j\in \ZZ$. For the abelian part, take $m=0+1+2+2=5$ and let $\{\mathbf{e_1}, \mathbf{e_2}, \mathbf{e_3}, \mathbf{e_4}, \mathbf{e_5}\}$ be the canonical basis for $\ZZ^5$. Using  Lemma~\ref{lem: pretotal2}, we can realize the almost-$\zero$ 4-configurations individually in the following way:
\begin{itemize}
\item $\conf[\{1\}]$ as $H'_1=\langle \ldots, u_{-2}, u_{-1}\rangle$, $H'_2=\{1\}$, $H'_3=\{1\}$, $H'_4=\{1\}$, all viewed as subgroups of $G'=\langle ...,u_{-2},u_{-1}; -\rangle \leqslant \Free[2]\times \ZZ^5$; 
\item $\conf[\{2,3\}]$ as $H''_1=\{1\}$, $H''_2=\langle u_0, u_1 \rangle$, $H''_3=\langle u_0, u_1\t^\mathbf{e_1} \rangle$, $H''_4=\{1\}$, all viewed as subgroups of $G''=\langle u_0, u_1; \t^\mathbf{e_1}\rangle \leqslant \Free[2]\times \ZZ^5$;
\item $\conf[\{1,3,4\}]$ as $H'''_1=\langle u_2, u_3; \t^\mathbf{e_3}\rangle$, $H'''_2=\{1\}$, $H'''_3=\langle u_2, u_3; \t^\mathbf{e_2} \rangle$, $H'''_4=\langle u_2, u_3\t^\mathbf{e_2}; \t^\mathbf{e_3-e_2}\rangle$, all viewed as subgroups of $G'''=\langle u_2, u_3; \t^\mathbf{e_2}, \t^\mathbf{e_3}\rangle \leqslant \Free[2]\times \ZZ^5$; 
\item $\conf[\{2,3,4\}]$ as $H''''_1=\{1\}$, $H''''_2=\langle u_4, u_5; \t^\mathbf{e_5}\rangle$, $H''''_3=\langle u_4, u_5; \t^\mathbf{e_4} \rangle$, $H''''_4=\langle u_4, u_5\t^\mathbf{e_4}; \t^\mathbf{e_5-e_4}\rangle$, all viewed as subgroups of $G''''=\langle u_4, u_5; \t^\mathbf{e_4}, \t^\mathbf{e_5}\rangle \leqslant \Free[2]\times \ZZ^5$.
\end{itemize}

Note that, as stated in Lemma~\ref{lem: pretotal2}, each of these realizations is by subgroups whose intersections of projections to the free part are never cyclic; more precisely, for each nonempty $I\subseteq [k]$, we have $\rk \big( \bigcap_{i\in I} H'_i\pi \big) \neq 1$, $\rk \big( \bigcap_{i\in I} H''_i\pi \big) \neq 1$, $\rk \big( \bigcap_{i\in I} H'''_i\pi \big) \neq 1$, and $\rk \big( \bigcap_{i\in I} H''''_i\pi \big) \neq 1$. Moreover, we already took care to choose these realizations in strongly complementary subgroups of $\Free[2]\times \ZZ^5$, namely $G'$, $G''$, $G'''$, and $G''''$, respectively. Therefore, we can repeatedly apply \Cref{cor: sum} to get the following realization of $\conf=\conf[{\{1\}}]\join \conf[{\{2,3\}}]\join \conf[{\{1,3,4\}}]\join \conf[{\{2,3,4\}}]$ in $G'\cast G''\cast G'''\cast G''''\leqslant \Free[2]\times \ZZ^5$:
\begin{align*}
H_1 & \,=\, \langle \ldots, u_{-2}, u_{-1},u_{2}, u_3; \t^\mathbf{e_3}\rangle, \\ H_2 & \,=\, \langle u_0, u_1, u_4, u_5; \t^\mathbf{e_5}\rangle, \\ H_3 & \,=\, \langle u_0, u_1\t^\mathbf{e_1}, u_2, u_3, u_4, u_5; \t^\mathbf{e_2}, \t^\mathbf{e_4}\rangle, \\ H_4 & \,=\, \langle u_2, u_3\t^\mathbf{e_2}, u_4, u_5\t^\mathbf{e_4}; \t^\mathbf{e_3-e_2}, \t^\mathbf{e_5-e_4}\rangle. \tag*{\qed}
 \end{align*}
\end{exm}

This result tells us that every configuration is realizable collectively in the family of groups $\FTA$, i.e., in $\Free[2]\times \mathbb{Z}^m$ for $m$ big enough. Hence, in a group $G$ containing all of $\Free[2]\times \mathbb{Z}^m$ as subgroups it will be possible to realize \emph{any} configuration. 

\begin{thm}\label{int. sat.}
There exist finitely presented intersection-saturated groups.
\end{thm}

\begin{proof}
Recall that we denote by $\mathbb{Z}^{\infty}$ the direct sum of countably many copies of $\mathbb{Z}$, namely $\mathbb{Z}^{\infty}=\oplus_{n=1}^{\infty} \mathbb{Z}$. From \Cref{thm: all realizable}, it is enough to construct a finitely presented group $G$ containing $\Free[2]\times \mathbb{Z}^{\infty}$. 

Consider Thompson's group $F$: it is well-known that $F$ is finitely presented (in fact, it admits a presentation with just two generators and two relations, see \cite[Thm. 3.4]{cannon_introductory_1996}), and that it contains $\mathbb{Z}^{\infty}$, see \cite[Thm. 4.8]{cannon_introductory_1996}. Therefore, $G=\Free[2]\times F$ is a finitely presented intersection-saturated group. (Note that Thomson's group contains no nonabelian free subgroup so, in order to get intersection-saturation via our arguments, it is necessary to take the direct product with $\Free[2]$.) 

Alternatively, we can consider $\bigoplus_{n=-\infty}^{\infty} \ZZ =\langle \ldots ,x_{-1}, x_0, x_1, \ldots \mid [x_i, x_j]=\trivial,\,\, \forall i,j\in \mathbb{Z}\rangle$, take the automorphism $\varphi$ given by translation of coordinates, $x_i\mapsto x_{i+1}$, and take the semidirect product $G'=\mathbb{Z}^{\infty} \rtimes_{\varphi} \mathbb{Z}$. This group is finitely generated (in fact, by just $x_0$ and the stable letter $t$) and (not finitely but) recursively presented; so, by Higman's embedding theorem, it embeds in a finitely presented group $G'\into G$. Clearly, $\Free[2]\times G$ is finitely presented as well, and intersection-saturated.
\end{proof}

\section{Characterization for the free group case}\label{sec: free}

In this section we characterize the configurations which are realizable in a free group $\Fn$, $n\geq 2$. Roughly speaking, they are precisely those satisfying the Howson property; see Theorem~\ref{thm: char Fn}. That is, the Howson property is the only true obstacle for intersection realizability in a free nonabelian group. We first introduce a couple of notions convenient to state and prove our last result.

\begin{defn}
A $k$-configuration $\conf$ is said to be \emph{Howson} if, for every nonempty $I,J\subseteq [k]$, we have $(I\cup J)\conf=\fg$ whenever $(I)\conf=(J)\conf=\fg$.
\end{defn}

\begin{figure}[H]
\centering
\begin{tikzpicture}[>=latex,state/.style={circle,draw,minimum size=4mm},decoration={snake, segment length=2mm, amplitude=0.5mm,post length=1.5mm}]
    \node (1) {};
    \node[state] (10) [left = 1 of 1] {};
    \node[state] (01) [right = 1 of 1] {};    
    \node[state,fill=black] (11) [below = 0.5 of 1] {};
    \path[->] (01) edge[decorate] (11);
    \path[->] (10) edge[decorate] (11);
\end{tikzpicture}
\caption{A configuration is \emph{Howson} if and only if it \emph{does not} contain this pattern (where the snaked arrows denote directed paths, and the black vertex is where both paths first meet)} \label{fig:my_label}
\end{figure}

\begin{defn}
Let $\conf$ be a $k$-configuration with $k\geq 2$, and let $i\in [k]$. The \emph{restriction of $\conf$ to $\widehat{i} = [k]\setminus \{i\}$} is the $k-1$ configuration obtained from $\conf$ by removing index $i$ from the ambient, and restricting $\conf$ to the nonempty subsets of $[k]$ not containing $i$, \ie
 \[
\begin{array}{rcl}
\conf[|\hspace{1pt}\widehat{i}] \colon \mathcal{P}([k]\setminus \{i\})\setminus \{\varnothing\} & \to & \{\fg,\, \nfg\} \\ I\,\, & \mapsto & (I)\conf\,. \end{array}
 \]
Obviously, if $\conf$ is realizable in a group $G$ then so is $\conf[|\hspace{1pt}\widehat{i}]$\,.
\end{defn}

\begin{defn}
Let $\conf$ be a $k$-configuration, and let $i\in [k]$. The index $i$ is said to be \emph{$\zero$-chromatic} (in $\conf$) if $(I)\conf=0$ for every $I\subseteq [k]$ containing $i$; in other words, if $\conf =\conf[\mid\,\widehat{i}] \,\qplus_0 \zero$. Similarly, the index $i$ is said to be \mbox{\emph{$\one$-chromatic}} (in $\conf$) if $\conf= \conf[\mid\,\widehat{i}] \qplus_1 \one$.
\end{defn}

Below we use a typical property of free groups to realize intersection configurations `as deep as desired' into a FTFA group: any configuration realizable in $\FTA$, can always be realized using only a subgroup of $\Free[n]$ of some desired rank (at least two), and admitting a free supplement of arbitrary rank. 

\begin{lem} \label{lem: conf deep}
If a configuration $\conf$ is realizable in a FTFA group $\FTA$ with $n\geq 2$ then, for every $2\leq r\leq \infty$ and every $0\leq s\leq \infty$, there exist subgroups $F,K\leqslant \Free[n]$ such that $\rk(F)=r$, $\rk(K)=s$, and $\gen{F, K}=F*K\leqslant \Free[n]$, such that $\conf$ is also realizable in $F\times \Zm \leqslant \Fn \times \Zm$.
\end{lem}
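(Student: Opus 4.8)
The plan is to reduce the whole statement to one observation: an \emph{injective} homomorphism preserves intersection configurations. Concretely, if $\Psi$ is a monomorphism with domain $G$ and $H_1,\ldots,H_k\leqslant G$, then for every $\varnothing\neq I\subseteq[k]$ injectivity gives $\bigcap_{i\in I}(H_i\Psi)=\big(\bigcap_{i\in I}H_i\big)\Psi=H_I\Psi$, and $\Psi$ restricts to an isomorphism $H_I\isom H_I\Psi$; hence $H_I\Psi$ is finitely generated if and only if $H_I$ is. Therefore $\{H_i\Psi\}_{i\in[k]}$ realizes exactly the same configuration as $\{H_i\}_{i\in[k]}$. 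Everything then follows by pushing a given realization of $\conf$ through a suitable monomorphism of the shape $\phi\times\id_{\ZZ^m}$.

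First I would build $F$ and $K$. The characteristic property of free groups at play is that $\Free[n]$, with $n\geqslant2$, contains free subgroups of every countable rank. So I pick a free subgroup $W\leqslant\Free[n]$ of rank $r+s$ (with the convention $r+s=\infty$ whenever $r$ or $s$ is infinite, which is legitimate since $2\leqslant r\leqslant r+s\leqslant\aleph_0$), fix a free basis of $W$, and split it as a disjoint union $B_F\sqcup B_K$ with $|B_F|=r$ and $|B_K|=s$. Setting $F=\gen{B_F}$ and $K=\gen{B_K}$, I get $\rk(F)=r$ and $\rk(K)=s$; and since $B_F\sqcup B_K$ freely generates $W$, the subgroups $F$ and $K$ are automatically in free factor position, i.e.\ $\gen{F,K}=W=F*K\leqslant\Free[n]$. (The degenerate case $s=0$ simply yields $K=\Trivial$ and $\gen{F,K}=F=F*\Trivial$.)

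Next I would transport the realization into $F\times\ZZ^m$. Because $r\geqslant2$, the group $F$ is free nonabelian, so it again contains a copy of $\Free[n]$; fix a monomorphism $\phi\colon\Free[n]\to F$. Then $\Psi=\phi\times\id_{\ZZ^m}\colon\Free[n]\times\ZZ^m\to\Free[n]\times\ZZ^m$, acting by $u\t^{\mathbf{a}}\mapsto(u\phi)\t^{\mathbf{a}}$, is an injective homomorphism whose image $(\Free[n])\phi\times\ZZ^m$ lies inside $F\times\ZZ^m$. Taking a realization $H_1,\ldots,H_k\leqslant\Free[n]\times\ZZ^m$ of $\conf$ (which exists by hypothesis) and applying the opening observation to $\Psi$, the subgroups $H_i\Psi\leqslant F\times\ZZ^m$ realize $\conf$; that is, $\conf$ is realizable in $F\times\ZZ^m$, exactly as required.

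There is no genuinely hard step in this argument; it is forced once the correct monomorphism is isolated. The single place where care is needed---and where the hypothesis $2\leqslant r$ is used essentially---is that $F$ must be large enough to receive an embedding of the ambient free part $\Free[n]$ (the free parts $H_i\pi$ may have infinite rank, so the uniform choice is to embed all of $\Free[n]$); this is precisely guaranteed by $F$ being free of rank at least $2$. The infinite-rank cases $r=\infty$ or $s=\infty$ are absorbed uniformly by the cardinal convention above, so no separate treatment is needed.
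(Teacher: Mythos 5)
Your proposal is correct and is essentially the paper's argument spelled out in full: the paper's proof is the single sentence that this is ``an immediate consequence of the fact that $\Free[\infty]$ embeds in $\Fn$,'' and your construction of $W=F*K$ from a rank-$(r+s)$ free subgroup, together with transporting the realization through the monomorphism $\phi\times\id_{\ZZ^m}$, is precisely the intended instantiation of that fact. The opening observation that injective homomorphisms preserve intersection configurations is the right (and correctly justified) reduction.
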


\begin{proof}
This is an immediate consequence of the fact that $\Free[\infty]$ embeds in $\Fn$, for all $n \geq 2$.
\end{proof}

Observe that, in the particular case $m'= m''= 0$ (corresponding to free groups), the claim in \Cref{prop: sum general} follows immediately from \Cref{prop: = for free}, \emph{without any hypothesis} on the ranks of the subgroups.

\begin{prop}\label{prop: jjj}
Let $\conf'$ be a $k$-configuration realizable in $\Free[n']$ and $\conf''$ be a $k$-configuration realizable in $\Free[n'']$. Then, $\conf'\join \conf''$ is realizable in $\Free[n'+n'']$. \qed
\end{prop}

\begin{cor}\label{sum realizability}
If a $k$-configuration $\conf$ is realizable in $\Free[n]$ with $n\geq 2$, then the $(k+1)$-configurations $\conf \qplus_0 \zero$, $\conf \qplus_1 \one$, $\conf \qplus_{0} \conf$, and $\conf \qplus_{1} \conf$ are also realizable in $\Free[n]$.
\end{cor}

\begin{proof}
Apply~\Cref{lem: conf deep} with $m=0$, $r=\infty$, and $s=2$. Let $\{u, v\}$ be a free basis for the subgroup~$K$, and let $H_1,\ldots , H_k$ be a family of subgroups realizing $\conf$ in $F\leqslant \Free[n]$. Now, in order to realize:
\begin{itemize}
\item $\conf \qplus_0 \zero$, it is enough to take the subgroups $\widetilde{H}_1=H_1,\ldots , \widetilde{H}_k=H_k$, and $\widetilde{H}_{k+1}= \Trivial$;
\item $\conf \qplus_1 \one$, it is enough to take $\widetilde{H}_1=H_1*\gen{u,v},\ldots , \widetilde{H}_k=H_k*\gen{u,v}$ and $\widetilde{H}_{k+1}=\normalcl{v}_K$:
 for every $i \neq k+1$, $\widetilde{H}_{k+1} \cap \widetilde{H}_i = \widetilde{H}_{k+1}$ which is non-f.g., and
by \Cref{prop: jjj}, $\widetilde{H}_1,\ldots ,\widetilde{H}_k$ realize $\conf \join \zero=\conf$;
\item $\conf \qplus_0 \conf$, it is enough to take $\widetilde{H}_1=H_1,\ldots , \widetilde{H}_k=H_k$, and $\widetilde{H}_{k+1}=\Free[n]$.
\item $\conf \qplus_1 \conf$, it is enough to take $\widetilde{H}_1=H_1,\ldots , \widetilde{H}_k=H_k$, and $\widetilde{H}_{k+1}=F$.
\end{itemize}
This completes the proof.
\end{proof}

\begin{thm}\label{thm: char Fn}
A  finite configuration is realizable in $\Fn$, with $n\geq 2$, if and only if it is Howson. 
\end{thm}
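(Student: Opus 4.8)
The plan is to prove both directions separately. The forward direction (realizable $\Rightarrow$ Howson) is immediate: if $\conf$ is realized by subgroups $H_1,\ldots,H_k\leqslant\Fn$, then for any $\varnothing\neq I,J\subseteq[k]$ with $\conf(I)=\conf(J)=\fg$, the subgroups $H_I$ and $H_J$ are finitely generated; since $\Fn$ satisfies the Howson property, $H_{I\cup J}=H_I\cap H_J$ is finitely generated too, so $\conf(I\cup J)=\fg$. Thus $\conf$ is Howson. (Alternatively, this is the $r=2$ base case of \Cref{prop: main obstruction} applied in $\Fn=\Fn\times\ZZ^0$.)

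The substance is the converse: every Howson $k$-configuration is realizable in $\Fn$. First I would reduce to $n=2$, since $\Free[2]$ (indeed $\Free[\infty]$) embeds in $\Fn$ for all $n\geq 2$, and realizability is inherited by overgroups. The natural strategy is induction on $k$. For the inductive step, given a Howson $k$-configuration $\conf$, I would pick some index $i\in[k]$ and consider the restriction $\conf[|\hspace{1pt}\widehat{i}]$, which is a Howson $(k-1)$-configuration (removing an index preserves the Howson property, as every constraint on $[k]\setminus\{i\}$ is inherited). By induction $\conf[|\hspace{1pt}\widehat{i}]$ is realizable in $\Free[2]$. The task is then to \emph{extend} a realization of $\conf[|\hspace{1pt}\widehat{i}]$ to a realization of $\conf$ by adjoining a suitable subgroup $H_i$. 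The key tools are \Cref{cor: sum realizability} and the $\delta$-sum construction: if the index $i$ happens to be $\zero$-monochromatic or $\one$-monochromatic (i.e.\ $\conf=\conf[|\widehat{i}]\oplus_0\zero$ or $\conf=\conf[|\widehat{i}]\oplus_1\one$, with $m=0$ so $\FTA$ is just $\Fn$), then \Cref{sum realizability} finishes immediately.

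The main obstacle is the general inductive step, where the new index $i$ is \emph{not} monochromatic — that is, where adjoining $H_i$ must render some intersections $H_{I\cup\{i\}}$ finitely generated and others not, according to $\conf$. Here I would exploit the characteristic flexibility of free groups guaranteed by \Cref{lem: conf deep}: realize $\conf[|\widehat{i}]$ inside a free factor $F*K$ of $\Free[2]$ with $F$ carrying the existing realization (and $F\isom\Free[\infty]$, so there is room to spare) and $K$ a complementary free factor supplying fresh generators. I would then build $H_i$ as a free product of pieces, using the free-factor structure to control exactly which intersections survive: for each set $J$ in the support of $\conf$ containing $i$, one wants $H_J=H_{J\setminus\{i\}}\cap H_i$ to be non-finitely-generated, while for sets mapping to $\fg$ the intersection must stay finitely generated. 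The decisive observation is that the \emph{Howson} hypothesis is precisely what rules out contradictory demands: two ``$\fg$'' sets can never force their union to be ``$\nfg$'', so the pattern of constraints that $H_i$ must satisfy is internally consistent. The delicate part of the argument will be organizing the construction of $H_i$ as a strongly complementary / free-product assembly so that \Cref{technical} (with $m=0$, where it degenerates to the free-product intersection identity $H''_1\pi\cap H''_2\pi=(H_1\pi\cap H_2\pi)*(H'_1\pi\cap H'_2\pi)$ proved there) computes each partial intersection exactly, and verifying that the resulting ranks match $\conf$ on every $I\subseteq[k]$ with $i\in I$ — this bookkeeping over the $2^{k-1}$ relevant subsets, rather than any single hard step, is where the real care lies.
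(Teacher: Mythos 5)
Your forward direction is correct and matches the paper, as do the reduction to $n=2$ and the treatment of $\zero$- and $\one$-monochromatic indices via \Cref{sum realizability}. But the core of the converse --- the case you yourself flag as ``the main obstacle,'' namely adjoining a new subgroup $H_i$ when the index $i$ is \emph{not} monochromatic --- is never actually carried out. You propose to ``build $H_i$ as a free product of pieces'' so that \Cref{technical} computes the partial intersections, and you assert that the Howson hypothesis makes the demands on $H_i$ consistent, but you give no construction of those pieces, no specification of how the existing realization of $\conf[|\hspace{1pt}\widehat{i}]$ would have to be pre-arranged into strongly complementary factors compatible with them, and no argument that consistency of the demands implies satisfiability by a single subgroup. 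Deferring ``the bookkeeping over the $2^{k-1}$ relevant subsets'' is deferring the entire proof: a plain induction on $k$ does not obviously close, and this is a genuine gap rather than a routine verification.

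The paper resolves exactly this difficulty by changing the induction variable and exposing structure that your sketch misses. It inducts on the \emph{size of the support} of $\conf$ (not on $k$), and decomposes $\conf$ as the join $\cone{\conf}{I_1}\join\cdots\join\cone{\conf}{I_p}$ over the maximal elements $I_1,\ldots,I_p$ of the support; when $p\geq 2$ each cone has strictly smaller support and \Cref{sum} (with $m=0$) assembles the join --- this is where the free-product/strong-complementarity machinery of \Cref{technical} actually enters, at the level of whole configurations rather than of a single new $H_i$. When $p=1$, after stripping $\zero$-monochromatic indices one may assume $\conf([k])=\nfg$; then either $\conf=\one$, or one takes a set $I_2$ with $\conf(I_2)=\fg$ of \emph{maximal} cardinality and shows that every index $j\notin I_2$ is $\one$-monochromatic --- this is the one place the Howson hypothesis is genuinely used ($\conf(I_2\cup J)=\nfg$ by maximality forces $\conf(J)=\nfg$) --- which reduces $k$ without changing the support size. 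Your proposal contains none of this: no cone decomposition, no maximality argument, and no identification of where Howson does real work beyond ruling out ``contradictory demands.'' To repair the proposal you would essentially have to rediscover this structure.
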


\begin{proof}
For all the proof, we can assume $n=2$. Clearly, $\conf$ being Howson is a necessary condition. To show the converse, we will do induction on the cardinal of the support of $\conf$, say $s$ (regardless of its size~$k$). If $s=0$ then $\conf$ is the $\zero$ $k$-configuration, which is clearly realizable in $\Free[2]$ (in fact, in any group).

Suppose that every Howson configuration with support of size strictly less than $s\geq 1$ is realizable in $\Free[2]$. Let $\conf$ be a Howson $k$-configuration with support of cardinal $s$, and let us realize it in~$\Free[2]$. 

We define the \emph{cone of $\conf$ with vertex $I\subseteq [k]$}, denoted by $\cone{\conf}{I}$, as the $k$-configuration obtained after removing from the support of $\conf$ all the sets of indices not contained in $I$, if any; i.e.,
\[
\begin{array}{rcl} \cone{\conf}{I}\colon {\mathcal P}([k])\setminus \{\varnothing\} & \to & \{\fg,\, \nfg\} \\[2pt] J\,\, & \mapsto & \left\{ \begin{array}{ll} \fg & \text{if } J\not\subseteq I, \\ (J)\conf & \text{if } J\subseteq I. \end{array}\right. \end{array}
 \]

Now let $I_1, \ldots ,I_p\subseteq [k]$ be the maximal elements (with respect to inclusion) in the support of~$\conf$. It is clear that $\conf=\cone{\conf}{I_1}\join \cdots \join \cone{\conf}{I_p}$. If $p\geq 2$, by the induction hypothesis we can realize each of $\cone{\conf}{I_1}, \ldots ,\cone{\conf}{I_p}$ in $\Free[2]$, and by \Cref{prop: jjj}, we can realize their join $\conf$, in $\Free[2]$ as well. 

Hence, we are reduced to the case $p=1$, i.e., $\conf$ is a Howson $k$-configuration for which there is a nonempty set of indices $I_1 \subseteq [k]$ with $(I_1)\conf =1$, and $(J)\conf=\fg$ for every $J\not\subseteq I_1$. If $I_1\neq [k]$ then any index $j\in [k]\setminus I_1$ is $\zero$-chromatic and, by Corollary~\ref{sum realizability}, in order to realize $\conf =\conf[\mid\,\widehat{j}] \qplus_0 \zero$ we are reduced to realize the restriction $\conf[\mid\,\widehat{j}]$\,; repeating this operation for all such indices, we are reduced to the case $I_1=[k]$. That is, $\conf$ is a Howson $k$-configuration such that $([k])\conf=\nfg$.

If every nonempty $I\subseteq [k]$ satisfies $(I)\conf=\nfg$ then $\conf =\one$ and so, it is realizable in $\Free[2]$. Otherwise, take $\varnothing \neq I_2\subseteq [k]$ with $(I_2)\conf=\fg$ and with maximal possible cardinal. Since $I_2 \neq [k]$, there exist indices $j\not\in I_2$. And any such index $j$ is $\one$-chromatic: in fact, any subset $J\subseteq [k]$ containing $j$ satisfies $|I_2\cup J|>|I_2|$ so $(I_2\cup J)\conf=\nfg$ and, since $\conf$ is Howson and $(I_2)\conf = 0$, then $(J)\conf=\nfg$. Hence, by induction hypothesis, \smash{$\conf[\mid\, \widehat{j}]$} is realizable in $\Free[2]$ and, by Corollary~\ref{sum realizability}, $\conf=\conf[\mid\,\widehat{j}] \qplus_1 \one$ is also realizable in $\Free[2]$. This concludes the proof.
\end{proof}

As it is clear from the previous argument, all the steps in the proof of \Cref{thm: char Fn} are constructive. The result below follows.

\begin{cor}
There is an algorithm which, on input a Howson $k$-configuration $\conf$, provides explicit generators for subgroups $H_1,\ldots ,H_k\leqslant \Free[2]$ realizing $\conf$. \qed
\end{cor}

\section{Open questions}

We finish by asking three related natural questions.

\begin{que}
Is the obstruction in Proposition~\ref{prop: main obstruction} the only one for a configuration to be realizable in $\FTA$ for a particular abelian dimension $m$?
\end{que}

\begin{que}
Which $k$-configurations $\conf$ are realizable in $\FTA$ for any fixed $m$? Is it possible to give an explicit characterization in terms of $m$? Or at least an algorithm which, on input $\conf$ and $m$, decides whether $\conf$ is realizable in $\FTA$ (and, in the affirmative case, computes such a realization)? 
\end{que}

\begin{que}
Is there a finitely presented intersection-saturated group $G$ which does not contain~${\Free[2] \times \ZZ^{m}}$, for some $m\in \NN$?
\end{que}

\section*{Acknowledgements}

We are grateful to the anonymous referee for the detailed report on the initial version of this paper, which pointed out an overly coarse approach to the proof of the delicate \Cref{thm: technical}. This feedback enabled us to refine it to its current final form.

The three authors acknowledge support from the Spanish Agencia Estatal de Investigaci\'on through grant MTM2017-82740-P (AEI/ FEDER, UE). The first named author was also partially supported by MINECO grant PID2019-107444GA-I00 and the Basque Government grant~IT974-16,
and thanks for the support provided from the Universitat Politècnica de Catalunya through a María Zambrano grant. The second named author would like to express gratitude for the hospitality received from the Universidad del País Vasco (UPV/EHU) and for the support provided through a Margarita Salas grant from the Universitat Politècnica de Catalunya.

\renewcommand*{\bibfont}{\small}
\printbibliography

\end{document}